\theoremstyle{plain}
\newtheorem{theorem}{Theorem}[section]
\newtheorem{lem}[theorem]{Lemma}
 \newtheorem{lemma}[theorem]{Lemma}
\newtheorem{remark}{Remark}[section]
\newcommand{\cx}{{\mathcal X}}
\newcommand{\mR}{{\mathbb R}}
  \renewcommand{\theequation}{\arabic{section}.\arabic{equation}}
 \def\beqlb{\begin{eqnarray}}\def\eeqlb{\end{eqnarray}}
 \def\beqnn{\begin{eqnarray*}}\def\eeqnn{\end{eqnarray*}}
 \def\ar{\!\!&}
 \def\mbb{\mathbb}
 \def\qed{\hfill$\Box$\medskip}
\newcommand{\bcen}{\begin{center}}
\newcommand{\ecen}{\end{center}}
\newcommand{\bgeqn}{\begin{equation}}
\newcommand{\edeqn}{\end{equation}}
\def\dz{\delta}
\def\lz{\lambda}
\def\LZ{\Lambda}
\def\ez{\varepsilon}
\def\J{{\mathcal  J}}
\def\E{{\mbb  E}}
\def\mR{{\mbb  R}}
\renewcommand{\P}{\mathbb{P}}
\def\rar{\rightarrow}
\def\l{\left}
\def\r{\right}
 \def\ar{\!\!\!&}
\begin{document}

\title{On large deviation probabilities for empirical distribution of branching random walks: Schr\"{o}der case and B\"{o}ttcher case }

\author{Xinxin Chen and Hui He}

\maketitle

\noindent\textit{Abstract:}
 Given a super-critical branching random walk on $\mR$ started from the origin, let $Z_n(\cdot)$ be the counting measure which counts the number of individuals at the $n$-th generation located in a given set. Under some mild conditions, it is known in \cite{B90} that for any interval $A\subset \mR$, $\frac{Z_n(\sqrt{n}A)}{Z_n(\mR)}$ converges a.s. to $\nu(A)$,  where $\nu$ is the standard Gaussian measure.
 In this work, we investigate the convergence rates of $$\P\left(\frac{Z_n(\sqrt{n}A)}{Z_n(\mR)}-\nu(A)>\Delta\right),$$
 for $\Delta\in (0, 1-\nu(A))$, in both Schr\"{o}der case and B\"{o}ttcher case.

\bigskip

\noindent\textit{Mathematics Subject Classifications (2010)}:  60J60; 60F10.

\bigskip

\noindent\textit{Key words and phrases}: Branching random walk; large deviation; Schr\"oder case; B\"ottcher case.

\section{Introduction and Main results}\label{Sec01}

\subsection{Branching random walk and its empirical distribution}\label{sec:empiric-dist}

Branching random walk, as an interesting object in probability theory, has been widely considered and well studied these years. The recent developments can be referred to Hu-Shi \cite{HuShi09}, Shi \cite{S12}. On the other hand, this object is closely related to many other random models, for example, multiplicative cascades, random fractals and random walk in random environment (see e.g. Hu-Shi \cite{HS07, HS15}, Liu \cite{Liu98, Liu06}).

Generally speaking, in a branching random walk, the point processes independently produced by the particles, formulated by the number of children and their displacements, all follow the so-called reproduction law. However, in this work, we consider a simpler model by assuming that the number of children and the motions are independent. Let us construct
it in the following way.

We take a Galton-Watson tree ${\cal T}$, rooted at $\rho$, with offspring distribution $\{p_k; k\geq0\}$. For any $u,v\in\mathcal{T}$, we write $u\preceq v$ if $u$ is an ancestor of $v$ or $u=v$. Moreover, to each node $v\in\mathcal{T}\setminus\{\rho\}$, we attach a real-valued random variable $X_v$ and define its position by
\[
S_v:=\sum_{\rho\prec u\preceq v}X_u.
\]
Let $S_\rho:=0$ for convenience. Suppose that given the tree $\mathcal{T}$, $\{X_v; v\in\mathcal{T}\setminus\{\rho\}\}$ are i.i.d. copies of some random variable $X$ (which is called step size). Thus, $\{S_u; u\in\mathcal{T}\}$ is a branching random walk. For any $n\in\mathbb{N}$, we introduce the following counting measure
\[
Z_n(\cdot)=\sum_{|v|=n, v\in {\cal T}}1_{\{S_v\in \cdot\}},
\]
where $|v|$ denotes the generation of node $v$, i.e., the graph distance between $v$ and $\rho$. Apparently, $Z_0=\delta_0$.

In this article, we always suppose that $p_0=0,\, p_1<1$. Note immediately that $\{Z_n(\mR); n\geq0\}$ is a supercritical Galton-Watson process with mean $ m :=\sum_{k\geq1} kp_k>1$. Assume that $m<\infty$. It is well known that the martingale
\[
W_n:=\frac{Z_n(\mR)}{m^n}
\]
 converges almost surely to some non-degenerate limit $W$  if and only if $\sum k\log k p_k<\infty$ (see e.g. \cite{AN72}).
Naturally, a central limit theorem on $Z_n(\cdot)$, was conjectured by Harris \cite{Ha63} and was proved by Asmussen-Kaplan \cite{AK76}, then was extended by Klebaner \cite{K82} and Biggins \cite{B90}. It says that if the step size $X$ has zero mean and  finite variance $\sigma^2$, for $A\in \mathcal{A}_0:=\{(-\infty,x]; x\in\mR\}$,
 \begin{eqnarray}\label{martlim}
\lim_{n\rar\infty} \frac{Z_n(\sqrt{n\sigma^2}A)}{m^n}=\nu(A)W,\quad a.s.
\end{eqnarray}
where $\nu$ is the  standard Gaussian measure on $\mR$. Let $\bar{Z}_n(\cdot):=\frac{Z_n(\cdot)}{Z_n(\mR)}$ be the corresponding empirical distribution. Then,
\begin{eqnarray}\label{emplim}
\lim_{n\rightarrow\infty} \bar{Z}_n(\sqrt{n\sigma^2}A)=\nu(A),\quad a.s.
\end{eqnarray}
What interests us is the convergence rates of \eqref{martlim} and \eqref{emplim}.

In the literature,  Asmussen and Kaplan \cite{AK76} proved that if $x_n=(\sigma\sqrt{n})x+o(\sqrt{n})$ with $x\in \mbb R$, then
$$
\lim_{n\rar\infty}\E\l(\l(m^{-n}Z_n((-\infty, x_n])-\nu((-\infty, x])W\r)^2\r)=0.
$$Recently, Chen \cite{C01}, under some regular assumptions, proved that  as $n\rar\infty$,
\begin{equation}\label{CGL}
 \frac{Z_n(\sqrt{n\sigma^2}A)}{m^n}-\nu(A)W=\frac{1}{\sqrt{n}}\xi(A)+o(\frac{1}{\sqrt{n}}),\textrm{ a.s.}
\end{equation}
where $\xi(A)$ is some explicitly defined  random variable.
Later, Gao and Liu \cite{GL16} generalized this convergence for a branching random walk in some random environment. They also, in \cite{GL17}, obtained the second and third orders in this asymptotic expansion. In particular, if $A=\mbb R$, one can refer to \cite{He71} and \cite{A94} for convergence rate of $W_n$ to $W$. Furthermore, by taking $A$ a singleton, the local version of this convergence has been investigated by R\'ev\'esz \cite{Re94}, Chen \cite{C01} and Gao \cite{G17}, even in higher dimensions.

In this paper, we aim at understanding the so-called large deviation behaviour of the convergence \eqref{emplim}, by considering the decaying rate of the following probability
\begin{eqnarray}\label{devA}
\P\left(\bar{Z}_n(\sqrt{n\sigma^2}A)-\nu(A)\geq \Delta\right),
\end{eqnarray}
with $\Delta>0$ a small constant. 

In fact, this problem has been investigated by Louidor and Perkins \cite{LP15} by assuming that $p_0=p_1=0$ (B\"{o}ttcher case) and that $X$ is simple random walk's step.  Recently, Louidor and Tsairi \cite{LT17} extend this result to the B\"{o}ttcher case with bounded step size by allowing dependence between the motions of children and their numbers.  However, if $X$ is not bounded, its tail distribution will be involved in the arguments and many regimes will appear in the asymptotic behaviours of \eqref{devA}.

In what follows, we will consider \eqref{devA} in the B\"{o}ttcher case by assuming that the step size $X$ has either Weibull tail distribution or Gumbel tail distribution. Also,
we will study this problem in the Sch\"{o}der case where $p_1>0=p_0$.

\subsection{Main results}
In the following of this paper, we always assume that ,
\begin{equation}\label{HypN}
 \E(e^{\theta Z_1(\mR)})=\sum_{k\geq0}p_ke^{\theta k}<\infty, \quad \text{for some }\theta>0,
\end{equation}
and that
\begin{equation}\label{HypX}
X \text{ is symmetric},\,\text{ with } E[X^2]=\sigma^2=1.
\end{equation}
\begin{remark}
The assumption of symmetry of $X$ is not necessary, but simplifies the proof. As long as $\Lambda(\cdot)$, defined in (\ref{LZI}) below,  is finite on an open interval including $0$, our arguments work.
The exact value of $E[X^2]$ does not play crucial role in our arguments either, but  simplifies notion. We only need that $X$ satisfies the classic central limit theorem.
\end{remark}
Before stating our main results, we first introduce some notations.
Let $\mathcal A$ be the algebra generated by
$\{(-\infty, x], x\in {\mbb R}\}.$  For $p\in(\nu(A), 1)$ with $A\in {\mathcal A}\setminus\emptyset$ such that $\nu(A)>0$, define
\beqlb\label{defI}
I_A(p)\ar=\ar\inf\l\{|x|: \nu(A-x)\geq p\r\};\\
\label{defJ}
J_A(p)\ar=\ar\inf\l\{y: \sup_{x\in {\mbb R}}\nu((A-x)/\sqrt{1-y})\geq p,\, y\in[0,1]\r\}.
\eeqlb
We say that $X$ fulfills Cram\'er's condition if  $E[e^{\kappa X}]<\infty, \text{for some }\, \kappa>0.$ In this case,
we can define the so-called logarithmic moment generating function and its inverse: for any $t\in\mR$ and $s\in\mR_+$,
\begin{equation}\label{LZI}
\Lambda(t)=:\log E[e^{tX}]\in [0, \infty]\quad\text{ and }\quad\Lambda^{-1}(s):=\inf\{t>0: \Lambda(t)\geq s\},
\end{equation}
 with the convention that $\inf\emptyset=\sup\{t>0: \Lambda(t)<\infty\}.$

\begin{theorem}[Schr\"{o}der case]\label{Main01}
Assume that $p_1>p_0=0$ and $X$ fulfills Cram\'er's condition. For $p\in (\nu(A),1)$, if $I_A(p)<\infty$ and $I_A(p)$ is continuous at $p$, then
\[
\lim_{n\rightarrow\infty}\frac{1}{\sqrt{n}}\log\P\left(\bar{Z}_n(\sqrt{n}A)\geq p\right)=-\Lambda^{-1}(\log\frac{1}{p_1})I_A(p).
\]
\end{theorem}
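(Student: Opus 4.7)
The proof is driven by a \emph{spine heuristic}. The cheapest way to force $\bar Z_n(\sqrt n A)\geq p$ is to require that $\mathcal{T}$ reduce to a single line of descent for its first $k$ generations (an event of probability exactly $p_1^k$) while the unique individual drifts to position $\pm I_A(p)\sqrt n$ at Cram\'er cost $\exp(-k\Lambda^*(I_A(p)\sqrt n/k))$, after which the BRW evolves freely and, by \eqref{emplim}, its empirical distribution inherits the shift. Writing $\lambda^*:=\Lambda^{-1}(\log(1/p_1))$, convex duality gives
\beqnn
\inf_{k>0}\bigl\{k\log(1/p_1)+k\Lambda^*(I_A(p)\sqrt n/k)\bigr\}=\sqrt n\,\lambda^*\,I_A(p),
\eeqnn
with optimiser $k\asymp\sqrt n$ and typical step $\Lambda'(\lambda^*)$; this matches the claimed rate. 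The proof will then turn the heuristic into matching lower and upper bounds.

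For the lower bound I fix $\ez>0$ and, using continuity of $I_A$ at $p$, pick $x>I_A(p)$ with $\nu(A-x)>p$. Setting $k_n=\lfloor x\sqrt n/\Lambda'(\lambda^*)\rfloor$ and writing $\xi_1,\xi_2,\ldots$ for the step variables along the spine when it exists, I work on
$E_n:=\{Z_j(\mR)=1\text{ for all }j\leq k_n\}\cap\{\xi_1+\cdots+\xi_{k_n}\in[x\sqrt n,(x+\ez)\sqrt n]\}$.
Independence and Cram\'er's theorem yield $n^{-1/2}\log\P(E_n)\to-\lambda^* x$. Conditionally on $E_n$, the BRW rooted at the unique individual at generation $k_n$ is an independent copy of length $n-k_n$; since $k_n/n\to 0$, \eqref{emplim} gives $\bar Z_n(\sqrt n A)\to\nu(A-x)>p$ almost surely, so $\P(\bar Z_n(\sqrt n A)\geq p\mid E_n)\to 1$. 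Sending $\ez\downarrow 0$ yields the lower bound.

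For the upper bound, let $\tau:=\inf\{k\geq 1:Z_k(\mR)\geq 2\}$, so $\P(\tau=k)=p_1^{k-1}(1-p_1)$, and write $Y_{k-1}:=\xi_1+\cdots+\xi_{k-1}$ for the spine position at generation $k-1$ (defined on $\{\tau\geq k\}$). Fix $\dz>0$ and a cutoff $M_n$ with $M_n/\sqrt n\to\infty$, and decompose
\beqnn
\P(\bar Z_n(\sqrt n A)\geq p)\leq\sum_{k=1}^{M_n}\P\bigl(\tau=k,\ |Y_{k-1}|\geq(I_A(p)-\dz)\sqrt n\bigr)+R_n,
\eeqnn
where $R_n$ collects $\{\tau>M_n\}$ (with probability $p_1^{M_n}$, super-negligible) together with $\{\tau\leq M_n,\,|Y_{\tau-1}|<(I_A(p)-\dz)\sqrt n,\,\bar Z_n(\sqrt n A)\geq p\}$. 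The main sum is handled by Cram\'er's upper bound combined with the same convex-duality optimisation as in the heuristic, giving $\exp(-\sqrt n\lambda^*(I_A(p)-\dz)(1+o(1)))$; letting $\dz\downarrow 0$ recovers the rate. The hard part will be controlling $R_n$: here the spine is short of target, so the required empirical displacement must come from the post-$\tau$ subtrees, yet by \eqref{emplim} their weighted empirical mixture is centred essentially at $Y_{\tau-1}/\sqrt n$. I plan to show $\log R_n\leq-\sqrt n(\lambda^* I_A(p)+\eta)$ for some $\eta>0$ either by iterating the spine decomposition within each offspring subtree (each extra $\sqrt n$ of shift costing another factor of the same exponential form) or by a two-scale argument combining \eqref{emplim} at scale $\sqrt{n-\tau}$ with exponential concentration of the relative weights $Z_n^{(i)}(\mR)/Z_n(\mR)$, which would reduce $R_n$ to a statement about $Y_{\tau-1}$ already controlled by the main term.
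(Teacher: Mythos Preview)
Your lower bound is correct and matches the paper's Lemma~\ref{lem01low}: both force a single spine of length $\Theta(\sqrt n)$ to drift to $I_A(p)\sqrt n$ and then invoke \eqref{emplim} for the remaining free subtree.

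The upper bound has a genuine gap. Your decomposition by the first branching time $\tau$ cannot close: the residual $R_n$ contains in particular the event $\{\tau=1,\ \bar Z_n(\sqrt nA)\geq p\}$, whose probability is $(1-p_1)\,\P(\bar Z_n(\sqrt nA)\geq p\mid\tau=1)$. Conditioning on $\tau=1$ leaves $\geq 2$ independent subtrees of length $n-1$ started from $O(1)$ positions, and the max-inequality $\bar Z_n\leq\max_i\bar Z_{n-1}^{(i)}$ plus a union bound gives only $R_n\lesssim(m-p_1)\,q_{n-1}$ where $q_n:=\P(\bar Z_n(\sqrt nA)\geq p)$ --- the \emph{same} exponential order as $q_n$, not one with a strictly smaller exponent $-\sqrt n(\lambda^* I_A(p)+\eta)$. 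Your two proposed repairs both fail as stated: (i) iterating the spine inside each offspring subtree does not gain an $\eta>0$, since when $Y_{\tau-1}\approx 0$ each subtree faces the full displacement $I_A(p)\sqrt n$ again and the optimized cost is exactly $\lambda^* I_A(p)\sqrt n$; (ii) the two-scale/concentration route via Lemma~\ref{LP+} is useless at time $\tau$ because $|Z_\tau|=O(1)$, so the bound is only $e^{-C}$, not $e^{-c\sqrt n}$. In short, the $\tau$-decomposition is circular: bounding $R_n$ requires precisely the upper bound on $q_n$ you are trying to prove, and you have no seed to start a bootstrap.

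The paper avoids this by working at a \emph{deterministic} intermediate generation $N=\Theta(\sqrt n)$ rather than the random $\tau$. The key extra ingredient you are missing is the Schr\"oder local estimate $\P(|Z_N|=k)\leq C_3\,k^{\chi-1}p_1^N$ (Lemma~\ref{lemFW08}): combined with Lemma~\ref{LP+} it produces a genuine rough upper bound $q_n\leq C\,p_1^{\alpha I_A(p)(1-\ez)\sqrt n}$ for some small $\alpha>0$ (Lemma~\ref{rup}). This is the seed for a bootstrap. The iteration lemma (Lemma~\ref{iteration}) then improves any hypothetical rate $L$ to $F(L)=\alpha\inf_u(\log\frac1{p_1}+\gamma(u)-uL)+L$, and Lemma~\ref{A1} shows $F^{(k)}(L_0)\uparrow\Lambda^{-1}(\log\frac1{p_1})$. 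The step from $L$ to $F(L)$ uses the same max-inequality you invoke, but crucially combined with (a) the event $\{|Z_N|\leq n\}$ which already costs $\sim p_1^N$ rather than $O(1)$, and (b) a partition of the positions at generation $N$ so that each subtree faces a \emph{reduced} remaining distance $I_{A-u}(p)\approx I_A(p)-|u|$, tied to the Cram\'er cost of reaching $u\sqrt n$. Your $\tau$-decomposition has no analogue of (a), which is why the recursion stalls at $y=0$.
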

\begin{remark}
If we replace Cram\'er's condition in above theorem by  $\P(X>z)=\Theta(1) e^{-\lambda z^{\alpha}}$ as $z\rightarrow\infty$ with $\lambda>0$ and $0<\alpha<1$, then using same idea of proving (\ref{azle1}) below,  we may have
\[
\lim_{n\rightarrow\infty}\frac{1}{n^{\alpha/2}}\log\P\left(\bar{Z}_n(\sqrt{n}A)\geq p\right)=-\lambda I_A(p)^{\alpha}.
\]
\end{remark}

\begin{theorem}[Schr\"{o}der case]\label{Main02}
Assume that $p_1>p_0=0$. For $p\in (\nu(A),1)$, if $I_A(p)=\infty$ and $J_A(p)$ is continuous at $p$, then
\[
\lim_{n\rightarrow\infty}\frac{1}{{n}}\log\P\left(\bar{Z}_n(\sqrt{n}A)\geq p\right)= (\log {p_1})J_A(p).
\]
\end{theorem}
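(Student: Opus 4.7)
The plan is to prove matching lower and upper bounds at exponential rate $(\log p_1)J_A(p)$; the dominant mechanism is that the tree has an atypically long line-shaped prefix (of probability $p_1^K$, $K\approx J_A(p)n$), during which the unique particle follows a random walk before branching normally. For the lower bound, fix $\epsilon>0$ and set $K=\lceil(J_A(p)+\epsilon)n\rceil$, and consider the event $E_K$ that every particle at generations $0,\ldots,K-1$ has exactly one child, so that $\P(E_K)=p_1^K$. Conditionally on $E_K$, the subtree of the unique generation-$K$ particle is a standard branching random walk of depth $n-K$ starting from $S_K=\sum_{j=1}^K X_j$. Applying \eqref{emplim} to this subtree,
\[
\bar Z_n(\sqrt n A) \;\longrightarrow\; \nu\!\left(\frac{A - S_K/\sqrt n}{\sqrt{1-K/n}}\right) \quad \text{a.s.\ on } E_K.
\]
Continuity of $J_A$ at $p$ lets us choose $x^*\in\mR$ and a neighbourhood $U\ni x^*$ on which $\nu((A-x)/\sqrt{1-K/n})>p$ strictly; the classical CLT gives $\P(S_K/\sqrt n\in U)\geq c_0>0$, independently of $E_K$, so $\P(\bar Z_n(\sqrt n A)\geq p)\geq c_0\,p_1^K(1-o(1))$. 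Letting $\epsilon\to 0$ yields $\liminf_n\frac{1}{n}\log\P\geq J_A(p)\log p_1$.

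\textbf{Upper bound.} The strategy is to decompose by the first branching time $T=\inf\{k\geq 1:Z_k(\mR)\geq 2\}$, with law $\P(T=k)=(1-p_1)p_1^{k-1}$. Conditionally on $T=k$, at generation $k$ there are $N=Z_k(\mR)\geq 2$ particles at positions $S_k^{(i)}$, each launching an independent branching random walk $\tilde Z^{(i)}$ of depth $n-k$. Writing
\[
\bar Z_n(\sqrt n A) = \sum_{i=1}^N \frac{\tilde Z^{(i)}_{n-k}(\mR)}{Z_n(\mR)} \cdot \frac{\tilde Z^{(i)}_{n-k}(\sqrt n A - S_k^{(i)})}{\tilde Z^{(i)}_{n-k}(\mR)}
\]
as a convex combination with weights summing to one, the event $\{\bar Z_n(\sqrt n A)\geq p\}$ forces $\tilde Z^{(i)}_{n-k}(\sqrt{n-k}\,\tilde A_i)/\tilde Z^{(i)}_{n-k}(\mR)\geq p$ for at least one $i$, where $\tilde A_i:=(A-S_k^{(i)}/\sqrt n)/\sqrt{1-k/n}$ again lies in $\mathcal A$. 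A short computation from \eqref{defJ} gives the key identity
\[
(n-k)\,J_{\tilde A_i}(p) \;=\; n\,J_A(p) - k \qquad \text{for } k \leq nJ_A(p),
\]
so the cost $(\log p_1)(n-k)J_{\tilde A_i}(p)$ in fact depends only on $n$ and $k$. Inducting on $n$, applying the bound to each subtree, union-bounding over $i$, and summing over $k$ against $p_1^{k-1}$, the $k$-dependence in the exponent cancels exactly, leaving a prefactor polynomial in $n$ times $\exp[(\log p_1)J_A(p)n]$; the regime $k>nJ_A(p)$ contributes only the trivial tail $O(p_1^{nJ_A(p)})$.

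\textbf{Main obstacle.} The main difficulty is closing the induction uniformly: one must verify that continuity of $J$ at $p$ is preserved under the rescaling $A\mapsto\tilde A_i$ (reducing to continuity of $h_A(y)=\sup_x\nu((A-x)/\sqrt{1-y})$ at $y=J_A(p)$), keep the error terms and polynomial prefactors uniform across the family of sets $\tilde A_i$ arising in the recursion, and use \eqref{HypN} to control $\E[N\mid T=k]$ by an absolute constant depending only on the offspring law. Handling the boundary regime $k$ near $nJ_A(p)$ requires combining the induction with the trivial bound $\P\leq 1$. By contrast, the lower bound is an essentially routine application of the empirical and classical CLTs.
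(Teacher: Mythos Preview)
Your lower bound is essentially the paper's argument and is correct (the paper writes it a bit more carefully by fixing an interval $B_n$ for $S_{t_n}$ and invoking the uniform convergence in Lemma~\ref{lemuniform}, but the mechanism is the same).

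The upper bound, however, has a genuine gap. Your key identity $(n-k)J_{\tilde A_i}(p)=nJ_A(p)-k$ is correct (and nice), and because of it the exponential part of the cost is indeed independent of $k$. The problem is the \emph{prefactor}. Write $R(n,A):=p_1^{-nJ_A(p)}\P(\bar Z_n(\sqrt nA)\ge p)$. Your decomposition plus the union bound over the $N$ subtrees gives, for $k\le nJ_A(p)$,
\[
R(n,A)\ \le\ \frac{(1-p_1)\,\E[N\mid N\ge2]}{p_1}\ \sum_{k=1}^{\lfloor nJ_A(p)\rfloor} R(n-k,\tilde A)\ +\ O(1),
\]
because $(1-p_1)p_1^{k-1}\cdot p_1^{nJ_A(p)-k}=(1-p_1)p_1^{nJ_A(p)-1}$ for every $k$. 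The constant in front is $(m-p_1)/p_1>1$, and the sum has $\Theta(n)$ terms; this recursion forces $R(n)$ to grow \emph{exponentially} in $n$, not polynomially. So the induction cannot close with a subexponential prefactor, and the resulting bound $\P\le e^{cn}p_1^{nJ_A(p)}$ is useless for the $\limsup$. The obstacle you flag (uniformity over the family of $\tilde A_i$'s) is real but secondary; the structural issue is that one branching event only buys you a factor $\ge 2$ in population, while the union bound costs you the same factor, and the sum over $k$ adds a further factor $n$ per iteration.

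The paper avoids this entirely. It does \emph{not} recurse; it conditions once at the deterministic time $t_n=\lfloor(J_A(p)-\varepsilon)n\rfloor$. By definition of $J_A(p)$ one has $\sup_{y}\nu((A-y)/\sqrt{1-t_n/n})\le p-\delta$, so for \emph{every} configuration $\zeta$ of $Z_{t_n}$ the concentration estimate Lemma~\ref{LP+} gives $\P(\bar Z_{n-t_n}^{\zeta}(\sqrt nA)\ge p)\le C_1e^{-c|\zeta|}$. The Schr\"oder local estimate $\P(|Z_{t_n}|=k)\le C_3 k^{\chi-1}p_1^{t_n}$ (Lemma~\ref{lemFW08}) then lets you sum over $k$ to obtain $\P(\bar Z_n(\sqrt nA)\ge p)\le c\,p_1^{t_n}$ directly. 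This Fleischmann--Wachtel input is precisely what replaces your missing control on the prefactor: it says that the whole event $\{|Z_{t_n}|\text{ is small}\}$ already costs $p_1^{t_n}$, without any recursion. If you want to salvage a first-branching-time argument, you would need something with the same effect, e.g.\ iterate the decomposition until $|Z|\ge M$ and then apply Lemma~\ref{LP+}; but at that point you are essentially reproving Lemma~\ref{lemFW08}.
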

The next theorems concern the B\"{o}ttcher case where $p_0=p_1=0$. If $I_A(p+)=I_A(p)<\infty$, the decaying rate of (\ref{devA}) depends on the tail distribution of $X$, and also on the tree structure. In the following two theorems, we study two typical tail distributions of step size $X$: Weibull tail and Gumbel tail. Besides, we introduce
\[
b:=\min\{k\geq1: p_k>0\}\textrm{ and }B:=\sup\{k\geq 1: p_k>0\}\in [b, \infty].
\]
\begin{theorem}[B\"{o}ttcher case, Weibull tail]\label{Main03}
Assume that $p_0=p_1=0$. Suppose $\P(X>z)=\Theta(1) e^{-\lambda z^{\alpha}}$ as $z\rightarrow\infty$ for some constant $\alpha>0$ and $\lambda>0$. Take $p\in(\nu(A),1)$ such that $I_A(p)<\infty$ and $I_A(p)$ is continuous at $p$.
\begin{enumerate}
\item  If $\alpha\leq 1$ and $B>b$, then
\begin{equation}\label{azle1}
\lim_{n\rightarrow\infty}\frac{1}{n^{\alpha/2}}\log\P\left(\bar{Z}_n(\sqrt{n}A)\geq p\right) = -\lambda I_A(p)^{\alpha}.
\end{equation}
\item If $\alpha>1$ and $B>b$, then
\begin{equation}
\lim_{n\rightarrow\infty}\frac{(\log n)^{\alpha-1}}{n^{\alpha/2}}\log\P\left(\bar{Z}_n(\sqrt{n}A)\geq p\right) = -
\left(\frac{2\log b\log B}{\alpha(\log B-\log b)}\right)^{\alpha-1} \lambda I_A(p)^{\alpha}.
\end{equation}
\item If $B=b$, then there exist $C_\alpha>c_\alpha>0$ such that
\begin{equation}
-C_\alpha\leq\liminf_{n\rightarrow\infty}\frac{1}{n^{\alpha/2}}\log\P\left(\bar{Z}_n(\sqrt{n}A)\geq p\right)
\leq \limsup_{n\rightarrow\infty}\frac{1}{n^{\alpha/2}}\log\P\left(\bar{Z}_n(\sqrt{n}A)\geq p\right) \leq -c_\alpha.
\end{equation}
\end{enumerate}
\end{theorem}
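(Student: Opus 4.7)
The proof splits into the three stated regimes, and each is handled by a matching pair of lower and upper bounds. For each lower bound, I would construct an explicit atypical event of the right exponential cost that, by the in-subtree central limit theorem behind \reff{emplim}, forces $\bar Z_n(\sqrt n A)\geq p$. The role of the hypothesis $B>b$ is to give genuine offspring variability: it allows us to condition on a ``deviant'' subtree that captures an arbitrarily large fraction of the generation-$n$ population at a tree-structure cost that is at most polynomial in $n$, hence subdominant on the $n^{\alpha/2}$ scale. For each upper bound, the basic tool is a Chernoff-type inequality for $Z_n(\sqrt n A)$ combined with the Weibull tail $\P(X>z)=\Theta(1)e^{-\lambda z^\alpha}$ and a careful accounting of which ancestor-steps must be responsible for the deviation.

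For part (1), since $\alpha\leq 1$ the function $x\mapsto x^\alpha$ is subadditive, so one step of size $I_A(p)\sqrt n$ is cheaper than any splitting of that shift. The lower-bound event, for small $\epsilon>0$, would be $\{Z_1=b\}$ intersected with the event that the first child $v^{(1)}$ has step $X_{v^{(1)}}\in[(I_A(p)+\epsilon)\sqrt n,(I_A(p)+2\epsilon)\sqrt n]$ and that the subtree rooted at $v^{(1)}$ carries a fraction at least $q^\ast(\epsilon):=(p-\nu(A))/(\nu(A-I_A(p)-\epsilon)-\nu(A))<1$ of the generation-$n$ population. Since $B>b$ makes the relative subtree masses genuinely random in $(0,1)$, this last event has positive probability uniformly in $n$; combined with the subtree-wise CLT this yields $\bar Z_n(\sqrt nA)\geq p$ on an event of probability $\geq c e^{-\lambda(I_A(p)+2\epsilon)^\alpha n^{\alpha/2}}$, and sending $\epsilon\downarrow 0$ closes the bound. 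The upper bound is more delicate: expose the tree first and then argue, by a recursive Chernoff estimate on $Z_n(\sqrt nA)-pZ_n(\mR)$ propagated down the tree, that any event forcing the deviation must produce at least one ancestor-step of size $(1-o(1))I_A(p)\sqrt n$ somewhere in the genealogy, contributing $e^{-(1-o(1))\lambda I_A(p)^\alpha n^{\alpha/2}}$, while subadditivity of $x^\alpha$ rules out the alternative of splitting this shift into multiple smaller steps.

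Part (2) is the subtlest. For $\alpha>1$, $x^\alpha$ is superadditive, so splitting the total shift over $k$ generations of size $I_A(p)\sqrt n/k$ costs only $\lambda I_A(p)^\alpha n^{\alpha/2}/k^{\alpha-1}$; but each of the $k$ shifting generations must also keep the deviant branch dominating the population, which under $B>b$ incurs a tree-structure cost exponential in $k$ with rates that depend on $b$ and $B$. Balancing these two costs puts the optimum at $k\sim c^\ast\log n$ with $c^\ast=\alpha(\log B-\log b)/(2\log b\log B)$, reproducing both the rate $n^{\alpha/2}/(\log n)^{\alpha-1}$ and the constant $\bigl(2\log b\log B/(\alpha(\log B-\log b))\bigr)^{\alpha-1}\lambda I_A(p)^\alpha$. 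The matching upper bound requires a multi-scale analysis of the genealogy: partition the $n$ generations into blocks, bound the contribution of the step-sum in each block by a tilted-measure/Chernoff estimate, and then optimize over the block sizes to recover the same $k\sim\log n$ scaling. For part (3), $B=b$ kills the offspring variability and the tree-structure cost becomes rigid; the same constructions give bounds of the form $e^{-\Theta(n^{\alpha/2})}$ but no longer match from above and below, which is why only $-C_\alpha\leq\liminf\leq\limsup\leq -c_\alpha$ is claimed.

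The hardest part will be the precise matching of the constant in (2). Both the lower-bound construction and the upper-bound moment method carry many free parameters (number of shifting generations, block sizes, tilting parameter, and the required subtree-dominance threshold at each depth), and only a careful joint optimization reproduces the harmonic-type combination of $\log b$ and $\log B$ in the prefactor. Understanding exactly why this combination is the correct one requires tracking how the cost of forcing the deviant branch to retain a positive population fraction at each of the $k\sim\log n$ levels interacts with the per-step Weibull cost $\lambda(I_A(p)\sqrt n/k)^\alpha$; getting the $-$ instead of $+$ inside $\log B-\log b$ in the denominator is where the offspring variability $B>b$ enters quantitatively.
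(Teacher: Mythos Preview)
Your lower bound for part~(1) is essentially the paper's. The rest of your plan has two genuine gaps.

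\textbf{Upper bounds.} You do not identify the mechanism the paper actually uses, and your ``recursive Chernoff'' and ``multi-scale block'' sketches are too vague to carry the constants. The paper's device is uniform across all three parts: fix an intermediate generation $t_n$ (of order $\log n$), set $B_n=[-(I_A(p)-\eta)\sqrt n,(I_A(p)-\eta)\sqrt n]$, and split
\[
\P(\bar Z_n(\sqrt nA)\ge p)\le \P\bigl(\bar Z_{t_n}(B_n^c)>\delta/2\bigr)+\P\bigl(\bar Z_{t_n}(B_n^c)\le\delta/2,\ \bar Z_n(\sqrt nA)\ge p\bigr).
\]
The second term is killed by the concentration inequality of Lemma~\ref{LP+}: if most time-$t_n$ particles sit in $B_n$, then conditionally $\bar Z_n(\sqrt nA)$ is within $\delta/4$ of $\tfrac{1}{|Z_{t_n}|}\sum_{x\in Z_{t_n}}\nu_{n-t_n}(\sqrt nA-x)\le p-\delta/4$ except with probability $Ce^{-c|Z_{t_n}|}\le Ce^{-cb^{t_n}}$. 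This concentration step is the missing idea in your proposal; without it you cannot exclude that many small ancestor-steps conspire. The first term is then a pure random-walk/branching estimate: for $\alpha\le 1$ one bounds $\P(Z_{t_n}(B_n^c)\ge 1)\le m^{t_n}\nu_{t_n}(B_n^c)$ and invokes Nagaev's subexponential large-deviation result, while for $\alpha>1$ one must bound $\P(Z_{t_n}(B_n^c)\ge\delta b^{t_n}/2)$ via a union bound over $B^{s_n}$-subsets of time-$t_n$ particles together with a tree-surgery argument that reduces each subset to a canonical one---not a block decomposition of $[0,n]$.

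\textbf{Lower bound in (2).} Your mechanism---at each of $k$ levels force one child's subtree to dominate, paying a per-level cost so the tree cost is singly exponential in $k$---does not yield $k\sim\log n$: balancing $e^{-ck}$ against $e^{-\lambda I_A(p)^\alpha n^{\alpha/2}/k^{\alpha-1}}$ gives $k\sim n^{1/2}$ and a trivial bound on the $n^{\alpha/2}/(\log n)^{\alpha-1}$ scale. Worse, iterating ``one child carries fraction $\ge q$'' over $k$ levels leaves the final deviant fraction at most $q^k$, so $q$ cannot be held fixed. The paper does something different: it specifies the \emph{entire} tree up to generation $t_n$ (cost $p_b^{O(b^{t_n})}$, doubly exponential in $t_n$), making it $b$-regular except that one vertex $u^*$ at depth $t_n-s_n$ roots a $d$-regular subtree with $b<d\le B$, $p_d>0$. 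The $t_n-s_n$ ancestors of $u^*$ each step by $x_0\sqrt n/(t_n-s_n)$, giving step cost $\exp\{-\lambda x_0^\alpha n^{\alpha/2}/(t_n-s_n)^{\alpha-1}\}$; the $d$-subtree stays put. Dominance at time $t_n$ is achieved \emph{once}, by $d^{s_n}\gg b^{t_n}$, which forces $s_n\sim t_n\log b/\log d$, hence $t_n-s_n\sim t_n(1-\log b/\log d)$. The requirement that the tree-specification cost $e^{-cb^{t_n}}$ be subdominant to the step cost caps $t_n$ at $\sim\tfrac{\alpha}{2\log b}\log n$, so $t_n-s_n\sim\tfrac{\alpha(\log d-\log b)}{2\log b\log d}\log n$; letting $d\uparrow B$ produces exactly $\bigl(\tfrac{2\log b\log B}{\alpha(\log B-\log b)}\bigr)^{\alpha-1}$. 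Thus $\log B$ enters through the explosion rate of the deviant subtree in the final $s_n$ generations, not through a per-level dominance cost as you describe.
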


\begin{theorem}[B\"{o}ttcher case, Gumbel tail]\label{Main04}
Assume that $p_0=p_1=0$. Suppose $\P(X>z)=\Theta(1)e^{-e^{ z^{\alpha}}}$ as $z\rightarrow\infty$ for some constant $\alpha>0$. For $p\in(\nu(A),1)$ such that $I_A(p)<\infty$ and $I_A(p)$ is continuous at $p$, we have
\begin{eqnarray}\label{Main0401}
\lim_{n\rightarrow\infty}{n^{-\frac{\alpha}{2(\alpha+1)}}}\log\left[-\log
\P\left(\bar{Z}_n(\sqrt{n}A)\geq p\right)\right]
= \left(y_{\alpha}I_A(p)\log b\right)^{\frac{\alpha}{\alpha+1}},
\end{eqnarray}
where $y_\alpha:=\frac{(1+\alpha)\log B}{(1+\alpha)\log B-\log b} $.
\end{theorem}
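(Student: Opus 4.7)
The plan is to establish~\eqref{Main0401} by matching a constructive lower bound on $\P(\bar Z_n(\sqrt{n} A)\geq p)$ with a union-bound upper bound, in the same spirit as Theorems~\ref{Main01}--\ref{Main03}. The guiding heuristic is that, in order that $\bar Z_n(\sqrt{n} A)\geq p>\nu(A)$, a positive fraction of gen-$n$ particles must be displaced by about $\sqrt{n}\, I_A(p)$, which by a CLT-type localisation (using $\E X=0$ and $\E X^2=1$) forces these descendants to share a common ancestor at some intermediate generation $k=k_n$ already located near $\sqrt{n}\,I_A(p)$, whose subtree then covers a fraction at least $p$ of the gen-$n$ population. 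Under the Gumbel tail $\P(X>z)=\Theta(1)e^{-e^{z^\alpha}}$, the cheapest way to achieve such a displacement along $k$ steps is by a uniform-cost schedule $y_1,\dots,y_k$ whose Gumbel cost $\sum_j b^j e^{y_j^\alpha}$ is balanced against the B\"ottcher ``tree cost''; this forces $k_n\asymp n^{\alpha/(2\alpha+2)}$ and hence the scale $\log(-\log \P)\asymp n^{\alpha/(2\alpha+2)}$ in~\eqref{Main0401}.

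For the lower bound on $\P$ I would construct an explicit favourable event $E$: fix $k=k_n$ and a non-increasing schedule $y_1\geq\cdots\geq y_k$ with $\sum_j y_j=\sqrt{n}\,I_A(p)+\varepsilon_n$, pick a $b$-ary subtree $T^*\subset\mathcal{T}$ (which is always available since every particle has at least $b$ children in the B\"ottcher regime), and require each particle of $T^*$ at depth $j\leq k$ to take the step $y_j$. To guarantee that the descendants of $T^*$ at time $n$ constitute fraction at least $p$ of $Z_n(\mR)$, I would additionally insert a short ``$B$-ary cascade'' phase of length $\ell_n$ to amplify the size of $T^*$, at extra cost $p_B^{O(B^{\ell_n})}$. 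The displacement factor contributes $\prod_j(\P(X>y_j))^{b^j}\asymp\exp\!\bigl(-\sum_j b^j e^{y_j^\alpha}\bigr)$, and a Lagrange optimisation forces $b^j e^{y_j^\alpha}$ to be constant in $j$, giving $y_j\approx((k-j)\log b)^{1/\alpha}$. A joint optimisation of $k_n$ and $\ell_n$ then yields the sharp coefficient $(y_\alpha\, I_A(p)\log b)^{\alpha/(\alpha+1)}$; the mixing constant $y_\alpha$ arises exactly from the trade-off between the cheap $b$-phase propagation of the shift and the $B$-phase amplification that shortens the required displacement phase.

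For the matching upper bound I would reverse this argument: conditional on the tree, a CLT estimate shows that any gen-$n$ particle in $\sqrt{n} A$ must descend from an ancestor at generation $k\sim k_n$ displaced by $\sqrt{n}\,I_A(p)-o(\sqrt{n})$ (the continuity of $I_A$ at $p$ absorbs the $o(\sqrt{n})$ slack), and a union bound over admissible ancestor-position configurations combined with the many-to-one lemma and sharp Gumbel-type partial-sum bounds shows each contribution is at most $\exp\bigl(-\exp(C n^{\alpha/(2\alpha+2)}(1+o(1)))\bigr)$ with the same $C=(y_\alpha I_A(p)\log b)^{\alpha/(\alpha+1)}$. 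The main obstacle I anticipate is pinning down this sharp constant: a naive single-phase ($b$-only) construction yields only the weaker coefficient $((1+\alpha)/\alpha\cdot I_A(p)\log b)^{\alpha/(\alpha+1)}$, which is strictly larger than $(y_\alpha I_A(p)\log b)^{\alpha/(\alpha+1)}$ whenever $B>b$, so one must execute the delicate two-phase $b$-then-$B$ decomposition carefully and verify that the corresponding variational problem is minimised at the value $y_\alpha I_A(p)\log b$, both in the construction and in the matching union-bound step.
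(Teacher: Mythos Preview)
Your overall strategy (intermediate generation of order $n^{\alpha/(2\alpha+2)}$, two-phase construction, then a matching union bound) is correct, but the specific lower-bound construction you describe will \emph{not} produce the constant $(y_\alpha I_A(p)\log b)^{\alpha/(\alpha+1)}$ when $B>b$, and the upper bound sketch is too coarse for the same reason.

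\textbf{Lower bound.} You propose to move an entire $b$-ary subtree $T^*$, so that the displacement cost is $\exp\bigl(-\sum_j b^j e^{y_j^\alpha}\bigr)$; then you append a $B$-cascade for amplification. This is not what the paper does and it cannot reach $y_\alpha$. Indeed, with your balanced schedule the cost exponent is $k b^k$ while the displacement is $\frac{\alpha}{\alpha+1}(\log b)^{1/\alpha}k^{(\alpha+1)/\alpha}$, and adding a $B$-phase in which the particles continue to move only \emph{decreases} the displacement achievable at given cost (the relevant derivative carries the sign of $\tfrac{1}{\log B}-\tfrac{1}{\log b}<0$), so the optimum of your two-phase scheme is $\ell_n=0$ and you are stuck at the weak coefficient $\bigl(\tfrac{1+\alpha}{\alpha}I_A(p)\log b\bigr)^{\alpha/(\alpha+1)}$. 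The paper's construction is structurally different: it forces the whole tree $\mathcal{T}_{t_n}$ to equal a deterministic tree $\mathbf{t}$ which is $b$-regular throughout, except that a single distinguished vertex $u^*$ at depth $t_n-s_n$ carries a $d$-regular subtree ($d\uparrow B$). Only the \emph{spine} $\rho\to u^*$ takes the large constant step $\cx n^{1/(2(\alpha+1))}$ (one particle per level, not $b^j$), and then the $d$-subtree takes decreasing steps $(\cx^\alpha n^{\alpha/(2(\alpha+1))}-k\log d)^{1/\alpha}$ so that $d^k e^{(\text{step})^\alpha}$ is constant across levels. The fraction condition comes from $d^{s_n}\gg b^{t_n}$, and the mixing constant $y_\alpha$ emerges from balancing the spine length $t_n-s_n$ against the fan-out depth $s_n$. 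The spine (one particle per level) is the missing ingredient in your plan.

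\textbf{Upper bound.} The many-to-one bound $\E[Z_{t_n}(B_n^c)]=m^{t_n}\nu_{t_n}(B_n^c)$ is sharp only when $B=\infty$; for finite $B$ it misses the spine-plus-$B$-fan structure and therefore cannot see $y_\alpha$. The paper instead shows $\P(\bar Z_n(\sqrt{n}A)\ge p)\le \P(Z_{t_n}(B_n^c)\ge \delta b^{t_n}/2)+C_1e^{-c b^{t_n}}$, then controls the first term by a combinatorial reconstruction: for any set $\mathcal{J}$ of $B^{s_n}$ displaced leaves it prunes and regrafts the ancestral tree into a canonical ``spine of length $t_n-s_n$ plus $B$-regular fan of depth $s_n$'' shape $\mathbf{t}^*_{\mathcal{J}}$, and then applies convexity of $e^{x^\alpha}$ level by level to reduce to a one-dimensional variational problem whose value is exactly $y_\alpha I_A(p)\log b$. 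A plain union bound over ancestor positions does not recover this.
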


\begin{remark}
To obtain the exact decaying rates, we take advantage of the randomness of the embedding tree in the arguments. However, if we consider a regular tree ($B=b$) and motions with Weibull tail, such idea does not work any more and the situation becomes more delicate.  We believe that in this case, there is a close link between the decaying rate and the a.s. convergence \eqref{CGL}.
\end{remark}

To accomplish this work, we also state the result when $X$ is bounded, which is obtained by Louidor and Tsairi in \cite{LT17}.

\begin{theorem}[B\"{o}ttcher case, Theorem 1.2 of \cite{LT17}]\label{Main05}
Assume that $p_0=p_1=0$. If $\text{ess sup }X=L$ for some $0<L<\infty$,  for $p\in(\nu(A),1)$ such that $I_A(p)<\infty$ and $I_A(p)$ is continuous at $p$, we have
\begin{equation}\label{LPThe}
\lim_{n\rightarrow\infty}\frac{1}{n^{1/2}}\log\left[-\log\P\left(\bar{Z}_n(\sqrt{n}A)\geq p\right) \right]= \frac{I_A(p)\log b}{L}.
\end{equation}
\end{theorem}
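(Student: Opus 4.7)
The plan is to establish matching upper and lower bounds for $\frac{1}{\sqrt n}\log[-\log \P(\bar Z_n(\sqrt n A)\geq p)]$. The target asymptotic amounts to $\P\asymp \exp(-b^{\sqrt n I_A(p)/L})$ in double-logarithmic scale, reflecting the following mechanism: to push $\bar Z_n(\sqrt n A)$ above $p$ one must shift the BRW's mass by approximately $\sqrt n I_A(p)$, and since $X\leq L$ almost surely, this shift must be built up over at least $k_n\approx \sqrt n I_A(p)/L$ generations; within the B\"ottcher tree the a.s.\ bound $Z_k(\mR)\geq b^k$ then forces roughly $b^{k_n}$ ancestral edges to carry near-maximal increments, each event costing a positive factor smaller than $1$.

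For the \emph{lower bound on $\P$} (equivalently, the upper bound on $\log[-\log\P]$), fix $\epsilon>0$, set $k_n=\lceil \sqrt n I_A(p)/(L-\epsilon)\rceil$, and consider the joint event $\ce_n$ that (i) $\cT$ is exactly $b$-regular through generation $k_n$, and (ii) every edge-increment in those generations exceeds $L-\epsilon$. By independence of tree and motions,
\[
 \P(\ce_n)=p_b^{(b^{k_n}-1)/(b-1)}\cdot\P(X>L-\epsilon)^{b(b^{k_n}-1)/(b-1)}=\exp(-C_\epsilon b^{k_n}).
\]
On $\ce_n$ the $b^{k_n}$ level-$k_n$ particles all sit at positions at least $k_n(L-\epsilon)\geq \sqrt n I_A(p)$, and below level $k_n$ the BRW evolves as a standard supercritical BRW from each such starting point. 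Applying Biggins' a.s.\ CLT \eqref{emplim} inside each of the $b^{k_n}$ independent subtrees of length $n-k_n$, the conditional fraction of descendants landing in $\sqrt n A$ converges to $\nu((\sqrt n A-S_v)/\sqrt{n-k_n})\to\nu(A-I_A(p))=p$; a small overshoot $p\to p+\gamma$, $\gamma\downarrow 0$, combined with the continuity of $I_A$ at $p$, yields a uniformly positive conditional probability of $\{\bar Z_n(\sqrt n A)\geq p\}$. Multiplying, taking logarithms twice, and sending $\epsilon\to 0$, gives $\limsup_n \frac{1}{\sqrt n}\log[-\log\P]\leq I_A(p)\log b/L$.

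For the \emph{upper bound on $\P$}, fix $\epsilon>0$, $k_n'=\lfloor \sqrt n I_A(p)/(L+\epsilon)\rfloor$, and decompose
\[
 \bar Z_n(\sqrt n A)=\frac{\sum_{|v|=k_n'}\mathcal Z_v}{\sum_{|v|=k_n'}Z^{(v)}_{n-k_n'}(\mR)},\qquad \mathcal Z_v:=\#\{|u|=n:v\preceq u,\,S_u\in\sqrt n A\}.
\]
Conditional on $\mathcal F_{k_n'}:=\sigma(T_{k_n'},(S_v)_{|v|=k_n'})$, the subtree statistics are independent, and Biggins' CLT gives $\mathcal Z_v/Z^{(v)}_{n-k_n'}(\mR)\to \pi_v:=\nu((\sqrt n A-S_v)/\sqrt{n-k_n'})$. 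The hard cap $S_v\leq Lk_n'<\sqrt n I_A(p)$ yields $\pi_v\leq p_\epsilon:=\nu(A-LI_A(p)/(L+\epsilon))<p$ for every $v$, so $\{\bar Z_n(\sqrt n A)\geq p\}$ forces either (A) an atypical $\mathcal F_{k_n'}$-configuration in which a positive fraction of the $Z_{k_n'}(\mR)\geq b^{k_n'}$ level-$k_n'$ particles realize $S_v$ within $o(\sqrt n)$ of the maximum $Lk_n'$; or (B) a large deviation inside the subtrees lifting $\mathcal Z_v/Z^{(v)}_{n-k_n'}(\mR)$ from $\pi_v$ up to $p$. In regime (A), the B\"ottcher bound forces $\sim b^{k_n'}$ ancestral edges to have increment within $\epsilon$ of $L$, of probability $\leq \exp(-c_\epsilon b^{k_n'})$; in regime (B) the same exponent emerges self-similarly by applying the statement to the shorter BRWs of length $n-k_n'$. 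Combining, $\P\leq \exp(-c'_\epsilon b^{k_n'})$, giving $\liminf_n\frac{1}{\sqrt n}\log[-\log\P]\geq I_A(p)\log b/(L+\epsilon)$, and $\epsilon\to 0$ completes the proof.

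The main obstacle is the sharpness of regime (A): a naive union bound over which $b^{k_n'}$ of the $\sim m^{k_n'}$ top edges carry near-maximal increments would pay an entropy factor $\binom{m^{k_n'}}{b^{k_n'}}$, overshooting the target $b^{k_n'}$-exponent by a factor $m^{k_n'}$ when $B>b$. The resolution is a moment analysis of Biggins' additive martingale restricted to the minimal $b$-skeleton, combined with a recursive/inductive application of the statement at smaller scales, which extracts the $b^{k_n'}$-exponent without incurring the $m^{k_n'}$ entropy cost. Balancing regimes (A) and (B) is what pins down the critical level $k_n'\approx \sqrt n I_A(p)/L$ and matches the lower-bound construction exactly.
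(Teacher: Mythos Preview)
The paper does not actually prove Theorem~\ref{Main05}; it is stated without proof and attributed to Louidor--Tsairi \cite{LT17} (see the sentence preceding the theorem). So there is no ``paper's own proof'' to compare against, only the pattern of argument the paper uses for the neighbouring Theorems~\ref{Main03} and~\ref{Main04}.

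Your lower bound is essentially the right construction, though as written it is imprecise: forcing $X_u>L-\epsilon$ on all edges of a $b$-regular skeleton of depth $k_n$ gives $S_v\in[k_n(L-\epsilon),k_nL]$, not $S_v\approx \sqrt n\,I_A(p)$. You need the positions to land \emph{near} the optimiser $x$ with $|x|=I_A(p)$, not merely beyond it, since $y\mapsto\nu(A-y)$ is not monotone. The fix is the one used in the paper's other lower bounds (e.g.\ \S4.1.1 and \eqref{EVENTE}): constrain the increments to a short interval near $L$ so that $S_v/\sqrt n$ sits in $[x-\eta,x+\eta]$, then invoke Lemma~\ref{LP+} or Lemma~\ref{lemloc} for the conditional probability.

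Your upper bound has a genuine gap, and the ``main obstacle'' you flag is a phantom. The dichotomy (A)/(B) is misconceived: since $\textrm{ess\,sup}\,X=L$, the bound $|S_v|\leq Lk_n'<\sqrt n\,I_A(p)$ holds \emph{deterministically} for every $v$ with $|v|=k_n'$, so $\pi_v\leq p-\delta$ for some $\delta>0$ always, and regime (A) is empty. There is no configuration to union-bound over, hence no $\binom{m^{k_n'}}{b^{k_n'}}$ entropy cost, and no need for recursion or ``moment analysis of Biggins' martingale on a $b$-skeleton''. The correct argument is the one the paper uses repeatedly (see \eqref{Main03up+}, \eqref{Main04up}): condition on $Z_{k_n'}=\zeta$, note that $\frac{1}{|\zeta|}\sum_{z\in\zeta}\nu_{n-k_n'}(\sqrt n A-z)\leq p-\delta/2$ uniformly, and apply Lemma~\ref{LP+} to get $\P(\bar Z_n(\sqrt n A)\geq p)\leq C_1 e^{-c\,b^{k_n'}}$ directly. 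Taking $k_n'=\lfloor \sqrt n\,I_A(p)/(L+\epsilon)\rfloor$ and letting $\epsilon\downarrow0$ finishes it in two lines.
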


The following result is universal, regardless of the tail distribution of $X$, when $I_A(p)=\infty$.

\begin{theorem}[B\"{o}ttcher case, Theorem 1.2 of \cite{LT17}]
Assume that $p_0=p_1=0$.  For $p\in(\nu(A),1)$ such that $I_A(p)=\infty$ and $J_A(p)$ is continuous at $p$, we have
\[
\lim_{n\rightarrow\infty}\frac{1}{{n}}\log\left[-\log\P\left(\bar{Z}_n(\sqrt{n}A)\geq p\right) \right]=J_A(p)\log b.
\]
\end{theorem}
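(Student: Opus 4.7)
The plan is to establish matching upper and lower bounds for $\frac{1}{n}\log\bigl(-\log\P(\bar Z_n(\sqrt{n}A)\geq p)\bigr)$, showing the common limit $J_A(p)\log b$, in parallel with the strategy of Theorems \ref{Main03}--\ref{Main05}. When $I_A(p)=\infty$ no translation of the BRW cloud at scale $\sqrt{n}$ suffices to place mass $p$ inside $A$; instead one must compress the effective spread of the cloud from variance $1$ down to variance $1-J_A(p)$, which in the B\"ottcher regime is realized by ``freezing'' roughly the first $J_A(p)n$ generations so that the remaining $(1-J_A(p))n$ generations produce the correct reduced variance by the classical CLT \eqref{emplim}.

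For the lower bound on $\P$ (equivalently an upper bound on $\log(-\log\P)/n$), I would fix $\epsilon>0$ and use continuity of $J_A$ at $p$ to pick $y\in(J_A(p),J_A(p)+\epsilon)$ together with $x\in\mR$ satisfying $\nu((A-x)/\sqrt{1-y})\geq p+\epsilon$. Set $k_n=\lfloor yn\rfloor$, restrict attention to a $b$-ary subtree $\cT^{(b)}\subset\cT$ (which exists in the B\"ottcher regime), and consider the event $E_n$ that every one of the $b^{k_n}$ particles at generation $k_n$ of $\cT^{(b)}$ has position within $\delta\sqrt n$ of $x\sqrt n$. Conditional on $E_n$ the subtrees rooted at generation $k_n$ are BRWs of length $(1-y)n$ starting within $\delta\sqrt n$ of $x\sqrt n$, so by \eqref{emplim} each places fraction $\geq p+\epsilon/2$ of its mass into $\sqrt n A$ with high probability; hence $\P(\bar Z_n(\sqrt n A)\geq p\mid E_n)\to 1$. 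One lower-bounds $\P(E_n)$ by constraining the $\sim b^{k_n}$ step variables attached to internal edges of $\cT^{(b)}$ to lie in a window of width $\delta/\sqrt n$ around the slope $x/(y\sqrt n)$, each at cost at least of order $1/\sqrt n$ (by the local CLT for the law of $X$, or by a Kolmogorov-Rogozin concentration argument using \eqref{HypX}), giving $\log(-\log\P(E_n))\leq k_n\log b+O(\log n)$; letting $y\downarrow J_A(p)$ concludes this direction.

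For the upper bound on $\P$, fix $y<J_A(p)$ so that $\sup_x\nu((A-x)/\sqrt{1-y})\leq p-2\eta$ for some $\eta>0$, and partition $\cT$ at generation $k_n=\lfloor yn\rfloor$:
\[
\bar Z_n(\sqrt n A)=\sum_{|u|=k_n}\frac{Z_n^{(u)}(\mR)}{Z_n(\mR)}\cdot\frac{Z_n^{(u)}(\sqrt n A)}{Z_n^{(u)}(\mR)}.
\]
By the classical CLT applied to each subtree of length $n-k_n\sim(1-y)n$, the conditional mean of $Z_n^{(u)}(\sqrt n A)/Z_n^{(u)}(\mR)$ given $\F_{k_n}$ is close to $\nu((A-S_u/\sqrt n)/\sqrt{1-y})\leq p-\eta$, so $\{\bar Z_n(\sqrt n A)\geq p\}$ is a $\geq\eta$-deviation of a convex combination of $N=Z_{k_n}(\mR)\geq b^{k_n}$ conditionally (given $\F_{k_n}$) independent $[0,1]$-valued variables with conditional means all $\leq p-\eta$. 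A Hoeffding-type bound for such weighted sums yields $\P\leq\exp(-c\eta^2 N)\leq\exp(-c\eta^2 b^{k_n})$, so $\log(-\log\P)\geq k_n\log b+O(1)$, and sending $y\uparrow J_A(p)$ matches the lower bound. The principal obstacle is making this Hoeffding step rigorous in the presence of the residual deviation left by the subtree CLT: one needs a quantitative CLT uniform in the starting location $S_u$ across all generation-$k_n$ subtrees, strong enough to absorb the $\Omega(1)$-deviation of the conditional mean from its CLT limit into the dominant $b^{k_n}$-rate cost. This is precisely the bootstrapping step executed by Louidor and Tsairi in \cite{LT17} and mirrored in the preceding theorems of the present paper.
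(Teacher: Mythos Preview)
This theorem is not proved in the present paper; it is quoted from \cite{LT17}, so there is no ``paper's proof'' to compare against. I will therefore assess your outline on its own merits, against the machinery the paper does provide.

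Your upper bound on $\P$ (the $\liminf$ direction for $\log(-\log\P)/n$) is correct and is precisely the B\"ottcher analogue of Lemma~\ref{lem02up}: condition on $Z_{k_n}=\zeta$ with $k_n=\lfloor yn\rfloor$, $y<J_A(p)$; by (6) of Lemma~\ref{LP} the convergence $\nu_{n-k_n}(\sqrt{n}A-x)\to\nu((A-x/\sqrt{n})/\sqrt{1-y})$ is uniform in $x$, so $\frac{1}{|\zeta|}\sum_{x\in\zeta}\nu_{n-k_n}(\sqrt{n}A-x)\leq p-\eta$ for \emph{every} $\zeta$, and Lemma~\ref{LP+} gives $\P(\bar Z_n(\sqrt nA)\geq p)\leq C_1e^{-C_2\eta^2|Z_{k_n}|}\leq C_1e^{-C_2\eta^2 b^{k_n}}$. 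There is no need to phrase this as a Hoeffding bound for a weighted sum with $\F_{k_n}$-measurable weights (your weights $Z_n^{(u)}(\mR)/Z_n(\mR)$ are in fact \emph{not} $\F_{k_n}$-measurable); Lemma~\ref{LP+} is already the right black box.

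Your lower bound on $\P$ has two genuine gaps. First, restricting to a $b$-ary subtree $\cT^{(b)}\subset\cT$ is not enough: $\bar Z_n$ is normalised by the total population $Z_n(\mR)$, which includes descendants of the generation-$k_n$ particles of $\cT\setminus\cT^{(b)}$, whose positions your event $E_n$ does not control. You must additionally force $\cT_{k_n}$ itself to be $b$-regular (cost $p_b^{\Theta(b^{k_n})}$, which is harmless at the double-exponential scale), exactly as in the constructions of Section~4. Second, your scheme of constraining each step $X_v$ to a window of width $O(1/\sqrt n)$ around $x/(y\sqrt n)$ presupposes that $X$ has a density near $0$; for lattice $X$ (e.g.\ simple random walk) such windows eventually have probability zero. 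Neither ``local CLT for the law of $X$'' nor Kolmogorov--Rogozin helps here: the former concerns sums, and the latter is an \emph{upper} bound on concentration. A construction that works in general (and is the one used in \cite{LP15,LT17}) is to fix two support points $a_1,a_2$ of $X$ with $\P(|X-a_i|\leq\epsilon)>0$ and prescribe, generation by generation, which of the two windows all $b^j$ steps must lie in so that every particle at generation $k_n$ lands within $O(\sqrt n)$ of $x\sqrt n$; in the lattice case one may simply force all steps at a given generation to equal a fixed atom. Either way the cost is $\exp(-\Theta(b^{k_n}))$, matching the tree-shape cost and yielding the claimed bound.
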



At the end of this section, let us say a couple of words on strategy of proofs, which is partially inspired by that of Loudior and Perkins. To have $\{\bar{Z}_n(\sqrt{n}A)\geq p\}$, we take an intermediate generation $t_n$ and suppose that most individuals at this generation are positioned around $x\sqrt{n}$, so that finally $\bar{Z}_n(\sqrt{n}A)\approx \nu_{n-t_n}(\sqrt{n}(A-x))\approx \nu(\frac{n}{n-t_n}(A-x))\gtrsim p$. This brings out the definitions of $I_A(p)$ and $J_A(p)$. If $I_A(p)<\infty$, we take $t_n=o(n)$; otherwise, we take $t_n=\Theta(n)$. Moreover, the effort made up to generation $t_n$ depends not only on branching but also on motions, which brings out the different treatments in different regimes.

The rest of this paper is organised as follows. In Sect. 2, we present some
basic facts on random walks and branching random walks which will be used frequently in our proofs of main results. We study the Schr\"oder case in Sect. 3, where Theorems \ref{Main01} and \ref{Main02} are proved. In Sect. 4,  B\"ottcher case is treated.  Theorems \ref{Main03} and \ref{Main04} will be  proved. Let $C_1, C_2, \cdots$ and $c_1, c_2, \cdots$ denote positive constants which might change from line to line. As usual, $f_n=O(g_n)$ or $f_n=O(1)g_n$ mean that $f_n\leq Cg_n$ for some $C>0$ and all $n\geq1$. $f_n=\Theta(1)g_n$ means that $f_n$ is bounded both above and below by $g_n$ asymptotically. $f_n=o(g_n)$ or $f_n=o_n(1)g_n$ mean that $\lim_{n\rightarrow \infty}\frac{f_n}{g_n}=0$.
\section{Preliminary results}

In this section, we present some well-known facts and useful lemmas, which will be applied frequently in the next sections.  Denote by $\nu_n:=\underbrace{\P_X*\cdots *\P_X}_{n\textrm{ times }}$, the distribution of a $X$-random walk at the $n$-th step. Recall that $\nu$ represents the standard normal distribution on the real line. The following lemma states some basic facts about $\nu$ and $\nu_n$.

\begin{lemma}\label{LP}
Let $A\in{\mathcal A}\setminus\emptyset$ and $p\in (0, 1)$.
\begin{enumerate}
\item The mapping $(a, b) \mapsto \nu(aA+b)\in C^{\infty}({\mbb R}^2)$. Moreover, it is Lipschitz on $b$ and is uniformly continuous on $K\times\mR$ for any compact set $K$.
\item If $0<I_A(p)<\infty$, then there exists $x\in \mbb R$ with $|x|=I_A(p)$ such that $\nu(A-x)\geq p.$
\item  $0\leq J_A(p)<1$ and  there exists $x\in \mbb R$  and $r\in (0, 1)$ with $r=J_A(p)$ such that $\nu((A-x)/\sqrt{1-r})\geq p.$
\item If $p>\nu(A)$, then either $0<I_A(p)<\infty$ or $I_A(p)=\infty$, $J_A(p)\in (0,1).$
\item \begin{description}
\item[ (i)] Let $A^-_\ez:=\{x\in A: B(x,\ez)\subset A\}$ be the $\ez$-interior of $A$, and $A^+_\ez:=\cup_{x\in A}B(x,\ez)$ be the $\ez$-neighbourhood of $A$. Then
\[
A^-_\ez\subset\cap_{y\in[-\ez/2,\ez/2]}(A-y) \textrm{ and } A\setminus A_\ez^-\subset (\partial A)^+_{\ez}
\]
and
\[
\cup_{y\in[-\ez/2,\ez/2]}(A-y)\subset A_{\ez}^+\textrm{ and } A_{\ez}^+\setminus A\subset (\partial A)^+_{\ez},
\]
where $(\partial A)^+_{\ez}$ is the $\ez$-neighbourhood of $\partial A$ and $\lim_{\ez\downarrow 0}\nu((\partial A)^+_{\ez})=\nu(\partial A)=0$.
\item[(ii)] Moreover,
\[
\sup_{x\in \mR}|\nu(A_{\ez}^+-x)-\nu(A-x)|=\sup_{x\in\mR}|\nu(A_{\ez}^+\setminus A-x)|\leq \sup_x\nu((\partial A-x)^+_{\ez}).
\]
If $Leb(\cdot)$ is the Lebesgue measure on $\mR$, we have $\text{Leb}(\partial A)=\nu(\partial A)=0$, and
\[
\sup_x\nu((\partial A-x)^+_{\ez})\leq Leb((\partial A)^+_\ez)\rightarrow 0, \textrm{ as } \ez\downarrow 0.
\]
\end{description}
\item As $\nu(\partial A)=0$, for any $l>1$, we have the following uniform convergence,
$$
\lim_{n\rightarrow\infty}\sup_{a\in [l^{-1}, l]}\sup_{b\in {\mbb R}}\left|\nu_n(\sqrt{n}(aA+b))-\nu(aA+b)\right|=0.
$$
\end{enumerate}
\end{lemma}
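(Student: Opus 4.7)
The plan rests on the observation that every $A\in\mathcal A$ is a finite disjoint union of intervals $\bigcup_{i=1}^{k}[c_i,d_i]$ (allowing $c_i=-\infty$ or $d_i=+\infty$), so $\partial A$ is a finite set. Writing $\Phi$ for the standard Gaussian distribution function, this representation immediately yields, for $a>0$,
\[
\nu(aA+b)=\sum_{i=1}^{k}\bigl[\Phi(ad_i+b)-\Phi(ac_i+b)\bigr],
\]
from which (1) is transparent: the map is $C^{\infty}$ in $(a,b)$, its $b$-derivative is bounded in absolute value by $2k\|\Phi'\|_{\infty}$, giving the Lipschitz constant in $b$, and on a compact $K\subset(0,\infty)$ both $\partial_a$ and $\partial_b$ are bounded uniformly in $b$, providing uniform continuity on $K\times\mR$. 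Item (5) is then routine set theory in $\mR$ combined with $\text{Leb}((\partial A)^{+}_{\ez})\le 2k\ez\to 0$, which immediately gives $\nu(\partial A)=0$ and the uniform bound on $\nu(A^{+}_{\ez}-x)-\nu(A-x)$.

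For (2), the set $F_p:=\{x:\nu(A-x)\ge p\}$ is closed by the continuity from (1), so if $I_A(p)<\infty$ any minimising sequence for $|x|$ on $F_p$ is bounded and compactness delivers $x^{\ast}\in F_p$ with $|x^{\ast}|=I_A(p)$. For (3) the key step is $J_A(p)<1$: fixing a nondegenerate subinterval $(c,d)\subset A$ (available since $\nu(A)>0$), the set $(A-(c+d)/2)/\sqrt{1-y}$ contains a symmetric interval of length $(d-c)/\sqrt{1-y}\to\infty$, forcing $\sup_x\nu((A-x)/\sqrt{1-y})\to 1$ as $y\uparrow 1$. Attainment of $r=J_A(p)$ with a realising $x$ follows from a two-variable compactness argument: a minimising sequence $(x_n,y_n)$ with $y_n\downarrow r$ is eventually contained in $[r,(1+r)/2]$ in the second coordinate, and if $A$ is bounded then $(x_n)$ is also bounded since $x\mapsto\nu((A-x)/\sqrt{1-y})$ decays to $0$ as $|x|\to\infty$ uniformly in such $y$, while if $A$ is unbounded one checks directly from the explicit form of $\nu(A-x)$ that $I_A(p)<\infty$, which forces $r=0$ and reduces attainment to (2). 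Item (4) then follows at once: if $I_A(p)=\infty$ no $x$ satisfies $\nu(A-x)\ge p$, so $y=0$ is infeasible in the definition of $J_A(p)$ and $J_A(p)>0$.

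For (6) I would appeal to P\'olya's theorem: setting $G_n(t):=\P((X_1+\cdots+X_n)/\sqrt n\le t)$, the CLT together with continuity of $\Phi$ give $\eta_n:=\sup_{t\in\mR}|G_n(t)-\Phi(t)|\to 0$; the decomposition $\sqrt n(aA+b)=\bigcup_{i=1}^{k}[\sqrt n(ac_i+b),\sqrt n(ad_i+b)]$ then yields
\[
\bigl|\nu_n(\sqrt n(aA+b))-\nu(aA+b)\bigr|\le 2k\,\eta_n\longrightarrow 0
\]
uniformly in $(a,b)\in[l^{-1},l]\times\mR$, since the number of endpoints $k$ is intrinsic to $A$ and the restriction on $a$ is inessential. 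The hard part will be the joint compactness/attainment argument for $J_A(p)$ in (3), especially arbitrating between the bounded and unbounded cases for $A$; the remainder is packaging of P\'olya's uniform CLT, smoothness of $\Phi$, and elementary geometry of finite unions of intervals.
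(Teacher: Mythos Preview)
The paper does not give a self-contained proof of this lemma: it simply remarks that parts (1)--(4) and (6) are contained in Lemmas~2.1, 2.2 and 2.4 of Louidor--Perkins \cite{LP15}, and that (5) is a basic property. Your proposal therefore goes well beyond what the paper does, supplying an explicit argument where the authors defer to a reference.

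Your approach is sound and is essentially the natural one: exploit that every $A\in\mathcal A$ is a finite disjoint union of intervals, so $\partial A$ is finite and all the analytic statements reduce to properties of the Gaussian cdf $\Phi$. The treatment of (1), (5) and (6) via P\'olya's uniform CLT is clean and correct; in (6) you are right that the bound $2k\eta_n$ does not use the restriction $a\in[l^{-1},l]$ at all. Two small points are worth noting. First, the $C^{\infty}(\mR^2)$ claim in (1) is literally only valid for $a\neq 0$ (for bounded $A$ the function is $|\Phi(ad+b)-\Phi(ac+b)|$, which loses smoothness at $a=0$; for unbounded $A$ it is discontinuous there), but every application in the paper has $a$ bounded away from $0$, so your restriction to $a>0$ is the right reading. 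Second, your dichotomy in (3) is correct in spirit but the statement's ``$r\in(0,1)$'' is imprecise when $J_A(p)=0$; however, the only use the paper makes of (3) is under the hypothesis $I_A(p)=\infty$, which (as you observe) forces $A$ to be bounded and $J_A(p)>0$, and there your compactness argument on $(x_n,y_n)$ goes through cleanly. In (4) you should add one line: $y=0$ infeasible does not by itself give $J_A(p)>0$; you need the continuity of $y\mapsto\sup_x\nu((A-x)/\sqrt{1-y})$ at $y=0$, which for bounded $A$ follows from your uniform-decay observation.
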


In fact, (1)-(4) and (6) can be found in Lemmas 2.1, 2.2 and 2.4 of \cite{LP15}, (5) is a basic property. So we feel free to omit its proof.

Let $\mathcal{M}$ be the collection of locally finite counting measures on $\mathbb{R}$. For any $\zeta\in\mathcal{M}$ which is finite, we can write it as
\[
\zeta=\sum_{i=1}^{|\zeta|}\delta_{x_i}
\]
with $x_i\in\mathbb{R}$ and $|\zeta|<\infty$ the total mass. For convenience, we write $x\in\zeta$ if $\zeta\{x\}\geq1$. Let $\{Z_n^\zeta\}$ be the branching random walk started from $Z_0^\zeta=\zeta$. Similarly, let $\bar{Z}_n^\zeta(\cdot)$ be the corresponding empirical distribution. Because of \eqref{HypN}, we have the following lemma, borrowed from \cite{LP15}.

\begin{lemma}\label{LP+}
There exists $C_1, C_2>0$ such that for all $\Delta>0$ sufficiently small and $n\geq1$, for any finite $\zeta\in\mathcal{M}$,
    \beqlb\label{lem2.1LP01}
    \P\left(\bar{Z}_n^{\zeta}(A)>\frac{1}{|\zeta|}\sum_{x\in \zeta}\nu_n(A-x)+\Delta\right)\leq C_{1}e^{-C_{2} \Delta^2|\zeta|},
    \eeqlb
   The same holds if $>, +\Delta$ are replaced by $<, -\Delta$, respectively.
\end{lemma}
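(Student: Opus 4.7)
The plan is an exponential Markov (Chernoff) inequality after decomposing the branching random walk into independent subtrees. Enumerate the atoms $x_1,\ldots,x_{|\zeta|}$ of $\zeta$ and write $p_i:=\nu_n(A-x_i)$, $\bar p := |\zeta|^{-1}\sum_i p_i$, $N_i := Z_n^{\delta_{x_i}}(\mR)$, $N_i^A := Z_n^{\delta_{x_i}}(A)$. Since distinct ancestors generate independent subtrees carrying independent displacements (the motion is independent of the branching), the pairs $(N_i,N_i^A)$ are mutually independent with $\mE[N_i]=m^n$ and $\mE[N_i^A]=m^n p_i$. Setting $V_i:=m^{-n}\bigl[N_i^A-(\bar p+\Delta)N_i\bigr]$, the target event $\{\bar Z_n^\zeta(A)>\bar p+\Delta\}$ is exactly $\{\sum_i V_i>0\}$, with $\sum_i \mE[V_i]=-|\zeta|\Delta$.

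For any $\lambda\geq 0$, Chernoff's inequality and independence give
\[
\mP\Bigl(\textstyle\sum_i V_i>0\Bigr)\leq \prod_i \mE[e^{\lambda V_i}].
\]
The core technical step is to prove a quadratic upper bound on the log-MGF,
\[
\log \mE[e^{\lambda V_i}]\leq \lambda\,\mE[V_i]+C_0\lambda^2,
\]
valid for $\lambda\in[0,\lambda_0]$ and uniformly in $i$, in $n\geq 1$, and in admissible $\bar p,\Delta\in[0,1]$, with absolute constants $C_0,\lambda_0>0$. Granted this, the Chernoff product is at most $\exp(-\lambda|\zeta|\Delta+C_0|\zeta|\lambda^2)$, and the choice $\lambda=\min\bigl(\Delta/(2C_0),\lambda_0\bigr)$ produces $\mP(\cdots)\leq \exp(-|\zeta|\Delta^2/(4C_0))$ once $\Delta\leq 2C_0\lambda_0$, giving $C_2=1/(4C_0)$; for larger $\Delta$ or very small $|\zeta|$ the target inequality is trivial upon enlarging $C_1$.

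The quadratic MGF bound is reduced to a uniform-in-$n$ exponential moment on the normalized population $\tilde N_n:=N_n/m^n$. One may assume $\bar p+\Delta\leq 1$ (otherwise the event is empty), whence $0\leq N_i^A\leq N_i$ forces $|V_i|\leq \tilde N_i$. Hypothesis (\ref{HypN}) provides an exponential moment for $Z_1(\mR)$, and a classical iteration of the Galton--Watson functional equation $\phi_{n+1}=\phi_1\circ\phi_n$ (see \cite{AN72}) furnishes $\lambda_0>0$ with $\sup_n \mE[e^{2\lambda_0 \tilde N_n}]<\infty$. This yields $\sup_{n,i}\mE[V_i^2\,e^{\lambda V_i}]<\infty$ for $\lambda\in[0,\lambda_0]$, and a second-order Taylor expansion of $\log \mE[e^{\lambda V_i}]$ with integral remainder delivers the claimed quadratic bound. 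The chief technical obstacle is precisely this uniform exponential moment on $\tilde N_n$; once it is in hand, the rest is a routine Chernoff optimization. The lower-tail statement, with $>,+\Delta$ replaced by $<,-\Delta$, follows symmetrically by applying the same reasoning to $-V_i$.
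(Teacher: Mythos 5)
Your argument is correct: the decomposition into the $|\zeta|$ independent subtrees, the reformulation of the event as $\{\sum_i V_i>0\}$ with $\sum_i\mE[V_i]=-|\zeta|\Delta$, the domination $|V_i|\leq N_i/m^n$, and the uniform exponential moment $\sup_n\mE[e^{2\lambda_0 Z_n(\mR)/m^n}]<\infty$ (which follows from \eqref{HypN}, e.g.\ via $\mE[e^{\lambda W_n}]\leq\mE[e^{\lambda W}]<\infty$ by Jensen) together give the quadratic log-MGF bound and hence the Gaussian-type tail after optimizing $\lambda$. The paper itself offers no proof of this lemma --- it is quoted from \cite{LP15} --- and your Chernoff argument is the standard route taken there, so this is essentially the intended proof, written out in full.
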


The next two lemmas are slightly stronger versions of (\ref{emplim}).
\begin{lemma}\label{lemuniform}
Let $A\in{\mathcal A}\setminus\emptyset$. Let $\{a_n: n\geq1\}$ and $\{b_n: n\geq1\}$ be two deterministic sequences such that $a_n\rightarrow1$ and $b_n\rightarrow0$. Then as ${n\rightarrow\infty}$,
\beqlb\label{lemuniform01}
\bar{Z}_n(\sqrt{n}(a_nA+b_n))\overset{a.s.}{\longrightarrow}\nu(A).
\eeqlb
\end{lemma}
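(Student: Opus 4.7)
The plan is to bracket the moving set $a_nA+b_n$ between two deterministic sets $A^-_{\ez}$ and $A^+_{\ez}$ from Lemma~\ref{LP}(5) that are independent of $n$ and close to $A$, apply the a.s.\ pointwise convergence \reff{emplim} at these fixed bracketing sets, and then let $\ez\downarrow 0$. The convergence \reff{emplim} is stated only for half-lines $A\in{\mathcal A}_0$, but it extends immediately to all $A\in{\mathcal A}$ by additivity (any element of ${\mathcal A}$ is a finite signed combination of elements of ${\mathcal A}_0$, and $\bar Z_n$ is a probability measure while $\nu$ is atomless).

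The key geometric step, which is the main (though mild) obstacle, is the following claim: since $A\in{\mathcal A}$ is a finite union of intervals, $\partial A$ is a finite set, and for each $\ez>0$ there exists $N_\ez$ with
\[
A^-_{\ez}\ \subset\ a_nA+b_n\ \subset\ A^+_{\ez}, \qquad \forall\,n\geq N_\ez.
\]
Indeed, the affine map $x\mapsto a_nx+b_n$ shifts every finite boundary point $c$ of $A$ by $(a_n-1)c+b_n\to 0$, and the inclusions follow by checking each interval of $A$ separately (unbounded intervals of $A$ contribute no difficulty, as only their finite endpoints are affected by the map). Consequently, for $n\geq N_\ez$ the monotonicity of $\bar Z_n$ in its argument yields the sandwich
\[
\bar Z_n(\sqrt n\, A^-_{\ez})\ \leq\ \bar Z_n(\sqrt n\,(a_nA+b_n))\ \leq\ \bar Z_n(\sqrt n\, A^+_{\ez}).
\]

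Applying the extended form of \reff{emplim} to the fixed deterministic sets $A^{\pm}_{\ez}\in{\mathcal A}$, the outer terms converge a.s.\ to $\nu(A^-_{\ez})$ and $\nu(A^+_{\ez})$, so for each fixed $\ez>0$,
\[
\nu(A^-_{\ez})\leq \liminf_{n\to\infty}\bar Z_n(\sqrt n\,(a_nA+b_n))\leq \limsup_{n\to\infty}\bar Z_n(\sqrt n\,(a_nA+b_n))\leq \nu(A^+_{\ez}), \quad \text{a.s.}
\]
Finally, letting $\ez=\ez_k\downarrow 0$ along a countable sequence (so the countable intersection of a.s.\ events remains a.s.) and invoking Lemma~\ref{LP}(5)(ii) to bound $|\nu(A^{\pm}_{\ez_k})-\nu(A)|\leq \nu((\partial A)^+_{\ez_k})\to 0$, we obtain $\bar Z_n(\sqrt n(a_nA+b_n))\to\nu(A)$ a.s., which is \reff{lemuniform01}. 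The geometric bracketing is the only non-routine ingredient; everything else is the standard portmanteau-style passage from pointwise a.s.\ convergence on fixed sets to convergence on slightly perturbed sets.
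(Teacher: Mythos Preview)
Your proof is correct; the paper itself omits the argument entirely, saying only that the lemma ``is a direct consequence of \eqref{emplim},'' and your $\ez$-bracketing sandwich is exactly the standard way to make that deduction rigorous. The only cosmetic point is that $A^\pm_\ez$ as defined in Lemma~\ref{LP}(5) need not literally lie in $\mathcal{A}$ (open versus half-open endpoints), but replacing them by half-open variants in $\mathcal{A}$---which differ only at finitely many points and hence have the same $\nu$-measure---repairs this without affecting the sandwich.
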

We feel free to omit its proof as it is a direct consequence of \eqref{emplim}.

\begin{lemma}\label{lemloc}
Take the same assumptions as in Lemma \ref{lemuniform}. Then for any finite $\zeta\in \mathcal{M}$, as ${n\rightarrow\infty}$,
\beqlb\label{lemlocal01}
\bar{Z}_n^{\sqrt{n}\zeta}(\sqrt{n}(a_nA+b_n))\overset{a.s.}{\longrightarrow}\frac{\sum_{x\in \zeta}\nu(A-x)W^x}{\sum_{x\in\zeta}W^x},
\eeqlb
where $\sqrt{n}\zeta=\sum_{x\in\zeta}\delta_{x\sqrt{n}}$ and $\{W^x: x\in\zeta\}$ are i.i.d. random variables distributed as $W$.
 \end{lemma}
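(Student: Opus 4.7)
The plan is to decompose the branching random walk started from the initial configuration $\sqrt{n}\zeta$ into $|\zeta|$ independent copies of the BRW started from a single particle at the origin, and then to apply Lemma \ref{lemuniform} to each of them after a suitable rewriting of the test set.

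First, for each $x\in\zeta$, let $(Z_n^{(x)})_{n\ge 0}$ denote an independent BRW started from a single particle at the origin, and write $W_n^{(x)}:=Z_n^{(x)}(\mR)/m^n$ with $W^x:=\lim_{n} W_n^{(x)}$. By the branching property,
\[
Z_n^{\sqrt{n}\zeta}(B) \;=\; \sum_{x\in\zeta} Z_n^{(x)}\bigl(B-x\sqrt{n}\bigr)
\]
for every Borel set $B\subset\mR$. The assumptions $p_0=0$ and \eqref{HypN} guarantee survival of the tree and $E[Z_1\log Z_1]<\infty$, so by Kesten--Stigum each $W^x$ is strictly positive a.s.; hence $\sum_{x\in\zeta}W^x>0$ a.s., and the right-hand side of \eqref{lemlocal01} is a.s.\ well defined.

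The key algebraic observation is that for each fixed $x\in\zeta$,
\[
\sqrt{n}(a_n A + b_n) - x\sqrt{n} \;=\; \sqrt{n}\bigl(a_n(A-x) + b_n + (a_n-1)x\bigr) \;=\; \sqrt{n}\bigl(a'_n (A-x) + b_n^{(x)}\bigr),
\]
with $a'_n := a_n \to 1$ and $b_n^{(x)} := b_n + (a_n-1)x \to 0$. Since $A-x\in\mathcal{A}\setminus\emptyset$ whenever $A\in\mathcal{A}\setminus\emptyset$, Lemma \ref{lemuniform} applied to $Z_n^{(x)}$ with the set $A-x$ and sequences $a'_n$, $b_n^{(x)}$ gives $\bar{Z}_n^{(x)}(\sqrt{n}(a'_n(A-x)+b_n^{(x)}))\to\nu(A-x)$ a.s. Combining with $W_n^{(x)}\to W^x$ a.s., we obtain
\[
\frac{Z_n^{(x)}\bigl(\sqrt{n}(a_nA+b_n)-x\sqrt{n}\bigr)}{m^n} \;\xrightarrow{\text{a.s.}}\; \nu(A-x)\,W^x.
\]

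Summing this convergence over the finitely many $x\in\zeta$ yields
\[
\frac{Z_n^{\sqrt{n}\zeta}\bigl(\sqrt{n}(a_nA+b_n)\bigr)}{m^n} \;\xrightarrow{\text{a.s.}}\; \sum_{x\in\zeta}\nu(A-x)\,W^x,
\]
and the same decomposition with $A$ replaced by $\mR$ (equivalently, summing the Galton--Watson martingale limits) gives $Z_n^{\sqrt{n}\zeta}(\mR)/m^n \to \sum_{x\in\zeta}W^x$ a.s. Taking the ratio of these two a.s.\ limits, with the denominator a.s.\ positive, produces \eqref{lemlocal01}. There is no real obstacle here: the only substantive point is the rewriting above, which converts the deterministic spatial shift by $-x\sqrt{n}$ into a sequence $b_n^{(x)}\to 0$ compatible with Lemma \ref{lemuniform}; the rest is bookkeeping over the finite set $\zeta$.
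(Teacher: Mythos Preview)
Your proof is correct and follows essentially the same approach as the paper: decompose into $|\zeta|$ independent single-particle BRWs via the branching property, apply the martingale limit and Lemma~\ref{lemuniform} to each, and take the ratio. The paper's proof is terser---it first records the case $a_n=1$, $b_n=0$ and then invokes Lemma~\ref{lemuniform}---whereas you make explicit the algebraic rewriting $\sqrt{n}(a_nA+b_n)-x\sqrt{n}=\sqrt{n}(a_n(A-x)+b_n+(a_n-1)x)$ that is needed to feed each shifted copy back into Lemma~\ref{lemuniform}; this is exactly the step the paper leaves implicit.
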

  \begin{proof} For any $A\in {\mathcal A}$ and $x\in\mR$, as $n\rightarrow \infty$,
 $$
\frac{Z_n^{\delta_{x\sqrt{n}}}(\sqrt{n}A)}{m^n}-W\nu(A-x)
\overset{a.s.}{\longrightarrow}0.
 $$
 This convergence, together with the  branching property of Galton-Watson process, gives
 \beqlb\label{lemlocal02}
\bar{Z}_n^{\sqrt{n}\zeta}(\sqrt{n}A)\overset{a.s.}{\longrightarrow}\frac{\sum_{x\in \zeta}\nu(A-x)W^x}{\sum_{x\in\zeta}W^x}.
\eeqlb
We thus conclude by Lemma \ref{lemuniform}.
 \end{proof}

When $p_1>0$, denote by $\chi$ the so-called Schr\"oder constant  with
\beqlb\label{defsch}
m^{-\chi}=p_1.
\eeqlb
We also recall here a result from Lemma 13 in \cite{FW08}. Note that $Z_n(\mR)=|Z_n|$ for the counting measure $Z_n$.
\begin{lemma}\label{lemFW08}
If $p_1>0$, then there exists a constant $C_3>0$ such that
\beqlb\label{local}
P(|Z_n|=k)\leq C_3\l(\frac{1}{k}\wedge\left( k^{\chi-1}p_1^n\right)\r),\quad \forall k,\, n\geq1.
\eeqlb
\end{lemma}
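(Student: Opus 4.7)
My plan is to express $P(|Z_n|=k)$ as the $k$th Taylor coefficient of the probability generating function $f_n(s):=\mathbb{E}[s^{|Z_n|}]$ and derive the two bounds via contour integration, exploiting the Schr\"oder structure available under the hypothesis $p_0=0$, $p_1\in(0,1)$. Since $f(0)=0$ and $f'(0)=p_1$, $0$ is an attracting fixed point of $f$; the Denjoy--Koenigs construction furnishes a Schr\"oder function $\Phi$, analytic near $0$, satisfying $\Phi(0)=0$, $\Phi'(0)=1$ and $\Phi\circ f=p_1\Phi$, so that by iteration
\begin{equation*}
f_n(s)=\Phi^{-1}\bigl(p_1^n\Phi(s)\bigr)
\end{equation*}
on a common neighbourhood of $0$. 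This representation is the engine for the refined bound involving $\chi$.

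For the sharper bound $C_3\,k^{\chi-1}p_1^n$ I would apply Cauchy's formula on a small circle $|s|=r$ contained in the domain of $\Phi$, and expand $\Phi^{-1}(u)=u+O(u^2)$ at $0$ to reduce the problem, at leading order in $p_1^n$, to estimating $[s^k]\Phi(s)$. The Schr\"oder exponent $\chi=-\log p_1/\log m$ enters through the singularity of $\Phi$ on the boundary of its natural domain: iterating the functional equation outward, one sees that $\Phi$ has a branch-point-type singularity of order $\chi$ at the repelling fixed point $1$ of $f$ (whose multiplier is $m$), so a transfer theorem yields $[s^k]\Phi(s)=O(k^{\chi-1})$, hence the claim. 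For the $C_3/k$ half of the statement I would rely on the martingale $W_n=|Z_n|/m^n\to W$: assumption \eqref{HypN} gives $W$ a bounded density $w$ on $(0,\infty)$, and Dubuc's local limit theorem yields $P(|Z_n|=k)\le C/m^n$ whenever $k/m^n$ is bounded away from $0$ and $\infty$, which translates into a $C/k$ bound in the bulk regime $k\asymp m^n$; for $k$ well outside this window one combines the trivial $k\,P(|Z_n|=k)\le \mathbb{E}|Z_n|=m^n$ with sharper polynomial-moment and exponential-moment tail bounds (both available from \eqref{HypN}), and the minimum of these bounds can be verified to be $O(1/k)$ uniformly.

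The main obstacle is the singularity analysis of $\Phi$: one must extend $\Phi$ analytically beyond its Denjoy--Koenigs neighbourhood so as to capture the precise exponent $\chi-1$ in $[s^k]\Phi(s)$, rather than a cruder exponential decay. This extension proceeds via the iterated identity $\Phi=p_1^{-n}\Phi\circ f_n$, which transports analyticity outward in a manner dictated by the hyperbolic dynamics of $f$ near its repelling fixed point $1$, and the quantitative extraction of coefficients then requires either a saddle-point computation or a Flajolet--Odlyzko-style transfer theorem. This technical piece is the heart of Lemma~13 of Fleischmann--Wachtel that we invoke here.
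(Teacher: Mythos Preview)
The paper does not prove this lemma at all: it is stated as a quotation of Lemma~13 in Fleischmann--Wachtel \cite{FW08}, with no argument supplied. There is therefore no ``paper's own proof'' to compare against; you are sketching a proof where the paper offers only a citation.

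Your outline is broadly in the spirit of how such local estimates are obtained---Schr\"oder linearisation near the attracting fixed point for the $k^{\chi-1}p_1^n$ bound, and a Dubuc-type local limit theorem for the $1/k$ bound---and you correctly flag the analytic continuation of $\Phi$ toward $s=1$ (and the extraction of the exponent $\chi$ from its boundary behaviour there) as the delicate step. One caution: your final sentence is circular. You write that ``this technical piece is the heart of Lemma~13 of Fleischmann--Wachtel that we invoke here,'' but Lemma~13 of \cite{FW08} \emph{is} the statement under discussion, so invoking it is not a proof. If your intention is simply to cite the result, as the paper does, then the preceding heuristics are superfluous; if your intention is to give an independent argument, the singularity analysis must actually be carried out rather than deferred back to the source. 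As a side remark, in this paper only the $k^{\chi-1}p_1^n$ half of the bound is ever used (see the estimates leading to \eqref{upbdN02}, \eqref{upbd1} and \eqref{lem02up02}), so the $1/k$ half---whose justification in your sketch is the looser of the two---is not load-bearing here.
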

The next lemma is the well-known Cram\'er theorem; see Theorem 3.7.4 in \cite{DZ98}. Recall that $\nu_n$ the distribution of a $X$-random walk at $n$-th step.
\begin{lemma}
If $\E[e^{\kappa X}]<\infty$ for some $\kappa>0$, for any  $a>0$ and $\ez>0$, as $n\rightarrow\infty$,
\beqlb\label{LDPRW}
\lim_{n\rightarrow\infty}\frac{1}{n}\log\nu_n((-(a+\epsilon)n, -an])= -\gamma(a):=-\sup_{t>0}\{at-\Lambda(t)\}.
\eeqlb
\end{lemma}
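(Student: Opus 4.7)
The plan is to reduce the statement to the classical Cram\'er theorem for the partial sums $S_n=X_1+\cdots+X_n$, and then extract the behaviour on the half-open interval by a simple subtraction argument.

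First, under the Cram\'er condition and the symmetry of $X$, the logarithmic moment generating function $\Lambda(t)=\log\E[e^{tX}]$ is finite in a neighbourhood of $0$ and satisfies $\Lambda(-t)=\Lambda(t)$. Hence the Fenchel-Legendre transform $I(x):=\sup_{t\in\mR}\{tx-\Lambda(t)\}$ is even, and for every $a>0$,
\[
I(-a)=\sup_{t>0}\{at-\Lambda(t)\}=\gamma(a),
\]
since the supremum for $a>0$ is attained on $(0,\infty)$ (as $\Lambda'(0)=\E[X]=0<a$). The classical Cram\'er theorem (Theorem 2.2.3 of \cite{DZ98}) applied to $S_n/n$ then gives
\[
\lim_{n\to\infty}\frac{1}{n}\log\nu_n((-\infty,-an])=-\gamma(a), \qquad \lim_{n\to\infty}\frac{1}{n}\log\nu_n((-\infty,-(a+\epsilon)n])=-\gamma(a+\epsilon).
\]

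Next, the identity
\[
\nu_n((-(a+\epsilon)n,-an])=\nu_n((-\infty,-an])-\nu_n((-\infty,-(a+\epsilon)n])
\]
reduces the problem to showing $\gamma(a+\epsilon)>\gamma(a)$, so that the subtracted term is of strictly smaller exponential order. This monotonicity is standard convex analysis: $\gamma$ is the restriction to $(0,\infty)$ of the good, convex, lower-semicontinuous rate function $I$, which vanishes only at $\E[X]=0$ and is strictly increasing on $(0,\infty)$ (being the convex conjugate of $\Lambda$ restricted to the interior of its effective domain, away from the unique minimiser $0$). Consequently
\[
\nu_n((-(a+\epsilon)n,-an])=\nu_n((-\infty,-an])\bigl(1-o(1)\bigr),
\]
and taking $\tfrac{1}{n}\log$ yields the announced limit.

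No serious obstacle is anticipated, since Cram\'er's theorem is invoked as a black box. One minor point worth mentioning is the boundary case $\gamma(a+\epsilon)=+\infty$ (which can occur when $a+\epsilon$ lies beyond the range of $\Lambda'$ on the open effective domain): in that case the second probability vanishes for all sufficiently large $n$, and the conclusion follows directly from the upper bound on the first term.
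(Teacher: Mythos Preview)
Your proposal is correct and follows essentially the same approach as the paper: both derive the interval estimate from Cram\'er's theorem for half-lines, with you spelling out the subtraction and strict-monotonicity argument that the paper leaves implicit in its phrase ``follows immediately.'' One small inaccuracy: in the boundary case $\gamma(a+\epsilon)=+\infty$ the second probability need not literally vanish for large $n$ (unless $X$ is bounded), but Cram\'er's upper bound still forces it to decay super-exponentially, which is all you need.
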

For $a<0$, we have a similar result with $\gamma(a)=\gamma(-a)$. This result follows immediately from Cram\'er's theorem, which says that for any $a>0$,
\[
\lim_{n\rightarrow\infty}\frac{1}{n}\log\nu_n((-\infty, -an])=-\gamma(a),
\]
where the rate function $\gamma(a)\geq 0$ and convex.

\section{Schr\"{o}der case}
\subsection{Proof of Theorem \ref{Main01}: $I_A(p)<\infty$ and $I_A$ is continuous at $p$}
In this section, we are going to demonstrate Theorem \ref{Main01}.

For the lower bound, we will make a single branch up to some generation $O(\sqrt{n})$, and the random walk along this branch moves to the level $x\sqrt{n}$ so that the descendants at the $n$-th generation behave like $\bar{Z}_n(\sqrt{n}A)\approx \nu(A-x)\geq p$ with high probability. For the upper bound, we will begin with a rough bound. Then an iteration method will be applied to improve the bound and to obtain the exact limit.
\subsubsection{Lower bound}
Recall  $\Lambda(t)=\log E[e^{tX}]$, (\ref{LZI}) and \eqref{LDPRW}.  Define
$$
\bar{\LZ}(p_1):=\inf_{a>0} \frac{\log \frac{1}{p_1}+\gamma(a)}{a}.
$$
One could check that
\begin{eqnarray}\label{lem01low03}
\bar{\LZ}(p_1)=\LZ^{-1}(\log\frac{1}{p_1})\in(0,\infty).
\end{eqnarray}
\begin{lemma}\label{lem01low} ({\bf Lower bound}) If all assumptions in Theorem \ref{Main01} hold, then for any $\ez>0$,
\begin{eqnarray}\label{lem01low01}
\liminf_{n\rightarrow\infty}\frac{1}{\sqrt{n}}\log {\mbb P}\left(\bar{Z}_n(\sqrt{n}A)\geq p\right)\geq -(I_A(p)+\ez)\LZ^{-1}(\log\frac{1}{ p_1}).
\end{eqnarray}
\end{lemma}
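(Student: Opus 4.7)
The strategy is a \emph{single-spine} construction exploiting $p_1>0$: force the tree to be a single line during the first $k=k_n=\Theta(\sqrt n)$ generations, drive the walk along that line ballistically to a favourable macroscopic height $x^*\sqrt n$, and then let the subtree rooted there equidistribute over the remaining $n-k$ generations. Because $k=o(n)$, the a.s.\ limit \eqref{emplim} applied to this shifted subtree still gives $\bar Z_n(\sqrt n A)\approx\nu(A-x^*)\ge p$ with conditional probability tending to $1$.

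\textbf{Step 1 (choosing the target).} Continuity of $I_A$ at $p$ yields $\delta>0$ with $I_A(p+\delta)\le I_A(p)+\epsilon/2$. Lemma \ref{LP}(2) then supplies $x^*$ with $|x^*|=I_A(p+\delta)$ and $\nu(A-x^*)\ge p+\delta$; by symmetry of $X$ we may assume $x^*>0$. Using the uniform continuity of $z\mapsto\nu(A-z)$ from Lemma \ref{LP}(1), pick $\epsilon_1>0$ small enough that $\nu(A-z)\ge p+\delta/2$ for every $z\in[x^*,x^*+\epsilon_1]$.

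\textbf{Step 2 (spine event plus Cram\'er).} Fix a trial slope $a>0$ and set $k:=\lceil x^*\sqrt n/a\rceil$, so $k/\sqrt n\to x^*/a$ and $k=o(n)$. Let $E_1$ be the event that the tree is a single line up to generation $k$, so $\mathbb P(E_1)=p_1^k$. On $E_1$ the unique node $v_k$ at generation $k$ has position $y_n=\sum_{i=1}^k X_i$, an i.i.d.\ walk independent of $E_1$. Let $E_2=\{y_n\in[ak,\,ak+\epsilon_1\sqrt n)\}$; the rescaled window $[a,a+\epsilon_1\sqrt n/k)$ has positive-length limit $[a,a+\epsilon_1 a/x^*)$, so Cram\'er's theorem \eqref{LDPRW} gives
$$
\mathbb P(E_2)\ge\exp\bigl(-k(\gamma(a)+o(1))\bigr).
$$

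\textbf{Step 3 (equidistribution).} By the branching property, conditional on $E_1$ and on $y_n$, the subtree rooted at $v_k$ is an independent BRW starting at $0$, so the fraction of its generation-$n$ descendants landing in $\sqrt n A$ equals $\bar Z_{n-k}^{\delta_0}\bigl(\sqrt n(A-y_n/\sqrt n)\bigr)$. For each fixed $z\in[x^*,x^*+\epsilon_1]$ we write $\sqrt n(A-z)=\sqrt{n-k}\,a_n(A-z)$ with $a_n=\sqrt{n/(n-k)}\to1$, and apply Lemma \ref{lemuniform} to the translated set $A-z\in\mathcal A$ with shift $b_n\equiv0$ to conclude
$$
\bar Z_{n-k}^{\delta_0}\bigl(\sqrt n(A-z)\bigr)\xrightarrow{\text{a.s.}}\nu(A-z)\ge p+\delta/2.
$$
A discretization of $[x^*,x^*+\epsilon_1]$ by finitely many $\eta$-spaced points $z_i$, combined with the sandwich $(A-z_i)^-_\eta\subseteq A-z\subseteq(A-z_i)^+_\eta$ from Lemma \ref{LP}(5)(i) and $\nu(\partial A)=0$, upgrades this to the uniform bound: a.s., eventually, $\inf_{z\in[x^*,x^*+\epsilon_1]}\bar Z_{n-k}^{\delta_0}(\sqrt n(A-z))\ge p$. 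Call this event $E_3$; then $\mathbb P(E_3\mid E_1\cap E_2)\to 1$.

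\textbf{Step 4 (assembly and optimization).} On $E_1$ the entire generation-$n$ population lies in the subtree of $v_k$, so $E_1\cap E_2\cap E_3\subseteq\{\bar Z_n(\sqrt n A)\ge p\}$. The branching property decouples the three events, giving
$$
\mathbb P\bigl(\bar Z_n(\sqrt n A)\ge p\bigr)\ge p_1^k\,e^{-k(\gamma(a)+o(1))}\,(1-o(1)).
$$
Taking $\log$, dividing by $\sqrt n$, using $k/\sqrt n\to x^*/a$, and then taking $\inf_{a>0}$ together with \eqref{lem01low03} yields
$$
\liminf_{n\to\infty}\frac{1}{\sqrt n}\log\mathbb P\bigl(\bar Z_n(\sqrt n A)\ge p\bigr)\ge-x^*\Lambda^{-1}(\log(1/p_1))\ge-(I_A(p)+\epsilon)\Lambda^{-1}(\log(1/p_1)),
$$
which is \eqref{lem01low01}.

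The main (though not conceptually deep) technical point is the uniform upgrade in Step 3, passing from the pointwise a.s.\ limit at each fixed shift $z$ to a limit uniform over the random value $y_n/\sqrt n$ in the window. This is a routine discretization argument made possible by $\nu(\partial A)=0$ for $A$ in the algebra, so the skeleton of the proof lives in Steps 2 and 4, where the exponential cost of the spine and of the ballistic large deviation are both linear in $k\sim x^*\sqrt n/a$ and are then balanced by the $\inf_{a>0}$ that produces $\Lambda^{-1}(\log(1/p_1))$.
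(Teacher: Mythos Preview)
Your proof is correct and essentially identical to the paper's: a single spine for $t_n=\Theta(\sqrt n)$ generations, Cram\'er's theorem for the walk cost, Lemma \ref{lemuniform} for the remaining subtree, then optimization over the slope $a$ via \eqref{lem01low03}. The only cosmetic difference is that the paper handles the uniformity in your Step~3 by replacing $A$ with the fixed intersection $\cap_{y}(A-y)$ and invoking Lemma \ref{lemuniform} once, rather than discretizing the shift window.
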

\begin{proof} Since $I_A(p)$ is continuous at $p$, by (2) of Lemma \ref{LP}, there exist $x\in {\mbb R}, \delta>0$ such that
$$
\nu(A-x)\geq p+\delta\quad\text{and}\quad |x|=I_A(p)+\ez.
$$
We only consider the case where $x<0$. The case where $x>0$ can be treated similarly. For any $a>0$, let $C_a:=\frac{I_A(p)+\ez }{a} $ and $t_n:= \lfloor C_a\sqrt{n}\rfloor$. Note that $
-aC_a=x$. Observe that for any $\eta>0$,
\[
\P(\bar{Z}_n(\sqrt{n}A)\geq p)\geq \P\left(\bar{Z}_n(\sqrt{n}A)\geq p, Z_{t_n}(\mR)= Z_{t_n}((-a(1+\eta) t_n, -at_n ])=1\right)
\]
which by Markov property at time $t_n$ implies that,
\begin{align}
&\P(\bar{Z}_n(\sqrt{n}A)\geq p)\nonumber\\
&\geq \P\left( Z_{t_n}(\mR)=Z_{t_n}((-a(1+\eta) t_n, -at_n ])=1\right)\inf_{y\in (-a(1+\eta) t_n, -at_n ]}\P\left(\bar{Z}_{n-t_n}^{\delta_y}( \sqrt{n}A)\geq p\right)\nonumber\\
&= p_1^{t_n}\nu_{t_n}(-(a(1+\eta) t_n, -at_n ] )\inf_{y\in ((1+\eta)x\sqrt{n}, x\sqrt{n} ]}\P\left(\bar{Z}_{n-t_n}( \sqrt{n}A-y)\geq p\right).\label{1tn}
\end{align}
It remains to treat the probability on the right hand side. Let $A(x, \eta):=\cap_{y_0\in ((1+\eta)x, x]}(A-y_0)$. Observe that for any $y\in(-a(1+\eta)C_a\sqrt{n}, -aC_a\sqrt{n} ]$,
\[
\bar{Z}_{n-t_n}( \sqrt{n}A-y)\geq \bar{Z}_{n-t_n}(\sqrt{n} A(x,\eta))=\bar{Z}_{n-t_n}\left( \sqrt{n-t_n}\left(\frac{\sqrt{n}}{\sqrt{n-t_n}}A(x,\eta)\right)\right).
\]
On the other hand, by (5) of Lemma \ref{LP}, for $\eta>0$ small enough,
\[
\nu(A(x,\eta))\geq \nu(A-x)-\delta/2\geq p+\delta/2.
\]
It follows that
\begin{align*}
&\inf_{y\in ((1+\eta)x\sqrt{n}, x\sqrt{n} ]}\P\left(\bar{Z}_{n-t_n}( \sqrt{n}A-y)\geq p\right)\\ &\qquad \geq  \P\left(\bar{Z}_{n-t_n}\left( \sqrt{n-t_n}\left(\frac{\sqrt{n}}{\sqrt{n-t_n}}A(x,\eta)\right)\right)\geq p\right)\\
&\qquad\geq  \P\left(\bar{Z}_{n-t_n}\left( \sqrt{n-t_n}\left(\frac{\sqrt{n}}{\sqrt{n-t_n}}A(x,\eta)\right)\right)\geq \nu(A(x,\eta))-\delta/2\right),
\end{align*}
which, as $n\rightarrow\infty$, converges to $1$ in view of Lemma \ref{lemuniform}. Going back to \eqref{1tn}, this means that for $\eta>0$ small enough and $n$ large enough,
\[
\P(\bar{Z}_n(\sqrt{n}A)\geq p)\geq \frac{1}{2}p_1^{t_n}\nu_{t_n}\Big((-a(1+\eta) t_n, -at_n ] \Big).
\]
This yields that
\[
\liminf_{n\rightarrow\infty}\frac{1}{\sqrt{n}}\log {\mbb P}\left(\bar{Z}_n(\sqrt{n}A)\geq p\right)\geq C_a\log p_1+\liminf_{n\rightarrow\infty} \frac{C_a}{t_n}\log \nu_{t_n}\Big((-a(1+\eta) t_n, -at_n ] \Big),
\]
which, by \eqref{LDPRW}, implies
\[
\liminf_{n\rightarrow\infty}\frac{1}{\sqrt{n}}\log {\mbb P}\left(\bar{Z}_n(\sqrt{n}A)\geq p\right)\geq -C_a\left(\log\frac{1}{p_1}+\gamma(a)\right).
\]
As the limit on the right hand side does not depend on $a$, we obtain that
\[
\liminf_{n\rightarrow\infty}\frac{1}{\sqrt{n}}\log {\mbb P}\left(\bar{Z}_n(\sqrt{n}A)\geq p\right)\geq -(I_A(p)+\ez)\inf_{a>0}\frac{\log\frac{1}{p_1}+\gamma(a)}{a}=- (I_A(p)+\ez) \bar{\LZ}(p_1),
\]
which implies the lower bound. \end{proof}

\begin{remark}
If $X$ is a simple random walk, then we have
$$
\bar{\LZ}(p_1)=\log \l(1+\sqrt{1-p_1^2} \r)-\log p_1\in (0,\infty).
$$
\end{remark}

\subsubsection{Upper bound} In this subsection, we are going to show that
\begin{equation}\label{lsup}
\limsup_{n\rightarrow\infty}\frac{1}{\sqrt{n}}\log {\mbb P}\left(\bar{Z}_n(\sqrt{n}A)\geq p\right)\leq -I_A(p)\bar{\LZ}(p_1).
\end{equation}

We believe that the strategy of lower bound is somehow optimal. For the upper bound, we consider also some intermediate generation of order $O(\sqrt{n})$, where, to get $\{\bar{Z}_n(\sqrt{n}A)\geq p\}$, the population size should be atypically small and the extreme positions should be close to $\pm I_A(p)\sqrt{n}$.

For $\ez\in(0,1/2)$ small, $a\in\mR_+$, define
\[
N(a)=[a(1-\ez)\sqrt{n}]\quad
\text{and}  
\quad
B_n:=[-I_A(p)(1-\ez)\sqrt{n}, I_A(p)(1-\ez)\sqrt{n}].
\]
Moreover, there exists $\delta\in(0, p-\nu(A))$ small enough such that
\begin{equation}\label{nvB}
\sup_{z\in B_n/\sqrt{n}}\nu(A-z)\leq p-\delta.
\end{equation}
The following lemma states the idea presented at the beginning of this section. It gives also a rough upper bound. Let $\mathcal{M}_B$ be the collection of all locally finite counting measures on $\mR$ which vanish outside $B$; i.e., ${\mathcal M}_B=\{\zeta\in{\mathcal M}: \zeta(B^c)=0\}$.  Recall that $|\zeta|$ is the total mass of $\zeta\in\mathcal{M}$.
\begin{lem}\label{rup}
Suppose that all assumptions in Theorem \ref{Main01} hold. Then there exists $\alpha>0$, such that for $a=I_A(p)\alpha$ and $n$ large enough,
\begin{equation}\label{Bc}
\P(Z_{N(a)}(B_n^c)\geq 1) =o_n(1)\P(\bar{Z}_n(\sqrt{n}A)\geq p)
\end{equation}
Moreover, we have
\begin{equation}\label{mainevent}
\P(\bar{Z}_n(\sqrt{n}A)\geq p) =(1+o_n(1))\P(Z_{N(a)}\in\mathcal{M}_{B_n}, |Z_{N(a)}|\leq n, \bar{Z}_n(\sqrt{n}A)\geq p).
\end{equation}
Consequently, there exists some constant $C_4>0$ such that for all $n\geq1$,
\begin{equation}\label{rub}
\P(\bar{Z}_n(\sqrt{n}A)\geq p) \leq C_4 p_1^{\alpha I_A(p)(1-\ez)\sqrt{n}}
\end{equation}
\end{lem}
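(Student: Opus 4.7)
The plan is to prove the three assertions in order, combining three ingredients: the many-to-one lemma with a Chernoff bound for \eqref{Bc}, the concentration estimate of Lemma~\ref{LP+} for \eqref{mainevent}, and the Schr\"oder tail bound of Lemma~\ref{lemFW08} for \eqref{rub}. For \eqref{Bc}, I apply Markov's inequality together with many-to-one to get $\P(Z_{N(a)}(B_n^c)\ge 1)\le m^{N(a)}\nu_{N(a)}(B_n^c)$, and then a standard Chernoff estimate to obtain, for any admissible $t>0$,
\[
\P(Z_{N(a)}(B_n^c)\ge 1)\le C\exp\Big\{(1-\ez)\sqrt{n}\,a\big[\Lambda(t)+\log m - t I_A(p)/a\big]\Big\}.
\]
Lemma~\ref{lem01low} provides $\P(\bar Z_n(\sqrt{n}A)\ge p)\ge\exp\{-(1+o(1))(I_A(p)+\ez')\bar\Lambda(p_1)\sqrt{n}\}$ for arbitrarily small $\ez'>0$. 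Setting $a=\alpha I_A(p)$ and letting $\alpha\downarrow 0$, one can pick $t/\alpha$ arbitrarily close to $\theta^{\ast}:=\sup\{s>0:\Lambda(s)<\infty\}$, which strictly exceeds $\bar\Lambda(p_1)=\Lambda^{-1}(\log(1/p_1))$ under Cram\'er's condition. For $\alpha$ sufficiently small this makes the left-hand exponent strictly smaller than the right-hand one, which yields \eqref{Bc}.

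For \eqref{mainevent}, in view of \eqref{Bc} it suffices to bound $\P(|Z_{N(a)}|>n,\,Z_{N(a)}\in\mathcal{M}_{B_n},\,\bar Z_n(\sqrt{n}A)\ge p)$. Conditioning on $Z_{N(a)}=\zeta$ with $\zeta\in\mathcal{M}_{B_n}$ and $|\zeta|=k$, the branching property reduces the event to $\{\bar Z_{n-N(a)}^\zeta(\sqrt{n}A)\ge p\}$. Items (1) and (6) of Lemma~\ref{LP}, together with \eqref{nvB}, give
\[
\sup_{x\in B_n}\nu_{n-N(a)}(\sqrt{n}A-x)\le p-\delta/2\quad\text{for $n$ large},
\]
so $\tfrac{1}{k}\sum_{x\in\zeta}\nu_{n-N(a)}(\sqrt{n}A-x)\le p-\delta/2$ as well, and Lemma~\ref{LP+} delivers $\P(\bar Z_n^\zeta(\sqrt{n}A)\ge p)\le C_1 e^{-C_2\delta^2 k/4}$. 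Summing this against the law of $|Z_{N(a)}|$ for $k>n$ contributes at most $O(e^{-cn})$, which is $o_n(1)$ times the lower bound of Lemma~\ref{lem01low}. Combined with \eqref{Bc}, this proves \eqref{mainevent}.

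Finally, for \eqref{rub}, I decompose on $|Z_{N(a)}|=k$ and apply the Schr\"oder estimate $\P(|Z_{N(a)}|=k)\le C_3 k^{\chi-1}p_1^{N(a)}$ of Lemma~\ref{lemFW08} together with the conditional bound $\P(\bar Z_n^\zeta(\sqrt{n}A)\ge p)\le C_1 e^{-C_2\delta^2 k/4}$ valid for $\zeta\in\mathcal{M}_{B_n}$, obtaining
\[
\P(\bar Z_n(\sqrt{n}A)\ge p)\le(1+o_n(1))\,p_1^{N(a)}\sum_{k=1}^{\infty}C_3 C_1 k^{\chi-1}e^{-C_2\delta^2 k/4}\le C_4\,p_1^{N(a)},
\]
since the Gaussian-type concentration tames the polynomial factor and the series converges. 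Recalling $N(a)=\lfloor\alpha I_A(p)(1-\ez)\sqrt{n}\rfloor$ gives \eqref{rub}. The only real subtlety lies in Step~1: one must verify that $\alpha$ can be chosen small enough so that the branching cost $m^{N(a)}$ is outweighed by the Cram\'er cost for the random walk to exit $B_n$; this quantitative comparison is precisely where Cram\'er's condition is used essentially, whereas the second and third steps are straightforward applications of the estimates recorded in Section~2.
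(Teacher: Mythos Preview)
Your proposal is correct and follows the same three-step structure as the paper: many-to-one plus Chernoff for \eqref{Bc}, then Lemma~\ref{LP+} concentration on $\mathcal{M}_{B_n}$ for \eqref{mainevent}, then Lemma~\ref{lemFW08} to sum out and obtain \eqref{rub}. The only slip is in Step~1, where you write ``pick $t/\alpha$ arbitrarily close to $\theta^\ast$''; what you need is to pick $t$ itself close to (but below) $\theta^\ast$ so that $\Lambda(t)<\infty$, and then send $\alpha\downarrow 0$ so that the exponent $(1-\ez)I_A(p)\bigl[\alpha(\Lambda(t)+\log m)-t\bigr]$ approaches $-(1-\ez)I_A(p)t<-(1-\ez)I_A(p)\bar\Lambda(p_1)$---this is exactly the paper's computation $\lim_{a\downarrow0}a(\gamma(I_A(p)/a)-\log m)=I_A(p)\theta^\ast>I_A(p)\bar\Lambda(p_1)$ rewritten in the dual variable.
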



\begin{proof} The proof will be divided into two subparts.\\
{\it Subpart 1:} We shall prove (\ref{Bc}). Observe that by symmetry of $\nu_N$ and Markov inequality,
\begin{align*}
\P(Z_{N(a)}(B_n^c)\geq 1) &\leq 2\P\left(Z_{N(a)}\Big([I_A(p)(1-\ez)\sqrt{n},+\infty)\Big)\geq 1\right)\\
&\leq 2\E\left[Z_{N(a)}\Big([I_A(p)(1-\ez)\sqrt{n},+\infty)\Big)\right]\\
&=2m^{N(a)}\nu_{N(a)}\Big([I_A(p)(1-\ez)\sqrt{n},+\infty)\Big).
\end{align*}
By Chernoff bound of Cram\'er's theorem, one sees that
\[
\nu_{N(a)}\Big([I_A(p)(1-\ez)\sqrt{n},+\infty)\Big)
=\nu_{N(a)}\Big(\left[\frac{I_A(p)}{a}{N(a)},+\infty\right)\Big) \leq \exp\left\{-{N(a)}\gamma\left(\frac{I_A(p)}{a}\right)\right\},
\]
which implies that
\begin{align}
\P(Z_{N(a)}(B_n^c)\geq 1)\leq &2e^{-{N(a)}(\gamma(\frac{I_A(p)}{a})-\log m)}\nonumber\\
=& 2 e^{-\sqrt{n} (1-\ez)(a\gamma(\frac{I_A(p)}{a})-a\log m)}.
\end{align}
Meanwhile,
\beqnn
\lim_{a\downarrow 0}a(\gamma(I_A(p)/a)-\log m)\ar=\ar I_A(p)\lim_{a\rightarrow+\infty } \frac{\gamma(a)-\log m}{a}\cr\ar=\ar I_A(p) \lim_{a\rightarrow+\infty }\frac{\gamma(a)+\log \frac{1}{p_1}}{a}>I_A(p)\bar{\LZ}(p_1).
\eeqnn
where the inequality follows from the definition of  $\bar{\LZ}(p_1)$.
Therefore,  for any $\alpha\in(0,1)$ small enough and any $\ez>0$ small enough, with $a=I_A(p)\alpha$, we have
\[
\limsup_n\frac{1}{\sqrt{n}}\log \P(Z_{N(a)}(B_n^c)\geq1)\leq - (1+\ez)I_A(p)\bar{\LZ}(p_1).
\]
Therefore, in view of the lower bounded obtained in Lemma \ref{lem01low}, one could choose $\alpha>0$ and $\ez_0>0$ sufficiently small  such that  for $\ez\in(0,\ez_0)$,
\[
\P(Z_{N(a)}(B_n^c)\geq1)=o_n(1)\P(\bar{Z}_n(\sqrt{n}A)\geq p),
\]
which is just (\ref{Bc}).\\

{\it Subpart 2:}  We turn to prove (\ref{mainevent}) and (\ref{rub}).
According to (\ref{Bc}), to bound $\P(\bar{Z}_n(\sqrt{n}A)\geq p)$, we only need to consider $\P(Z_{N(a)}\in\mathcal{M}_{B_n}, \bar{Z}_n(\sqrt{n}A)\geq p)$. Note that by Markov property at time $N(a)$,
\begin{align}\label{upbdN}
\P(Z_{N(a)}\in\mathcal{M}_{B_n}, \bar{Z}_n(\sqrt{n}A)\geq p)=&\sum_{\zeta\in\mathcal{M}_{B_n}}\P(Z_{N(a)}=\zeta, \bar{Z}_n(\sqrt{n}A)\geq p)\nonumber\\
=&\sum_{\zeta\in\mathcal{M}_{B_n}}\P(Z_{N(a)}=\zeta)\P\left(\bar{Z}_{n-N(a)}^\zeta(\sqrt{n}A)\geq p\right).
\end{align}
By (1) and (6) of Lemma \ref{LP}, we have
\[
\sup_{z\in B_n}|\nu_{n-{N(a)}}(\sqrt{n}A-z)-\nu(A-\frac{z}{\sqrt{n}})|=o_n(1).
\]
So, by \eqref{nvB}, for $n$ large enough and any $\zeta\in \mathcal{M}_{B_n}$, one has
\[
p\geq \sup_{z\in B_n}\nu(A-\frac{z}{\sqrt{n}})+\delta\geq \frac{1}{|\zeta|}\sum_{z\in\zeta}\nu_{n-N(a)}(\sqrt{n}A-z)+\delta/2.
\]
Therefore, \eqref{upbdN} becomes
\beqnn
\ar\ar\P(Z_{N(a)}\in\mathcal{M}_{B_n}, \bar{Z}_n(\sqrt{n}A)\geq p)\cr\ar\ar\qquad\leq \sum_{\zeta\in\mathcal{M}_{B_n}}\P(Z_{N(a)}=\zeta)\P\left(\bar{Z}^\zeta_{n-N(a)}(\sqrt{n}A)\geq \frac{1}{|\zeta|}\sum_{z\in\zeta}\nu_{n-N(a)}(\sqrt{n}A-z)+\delta/2\right),
\eeqnn
which, by Lemma \ref{LP+},  is bounded by $C_1\sum_{\zeta\in\mathcal{M}_{B_n}}\P(Z_{N(a)}=\zeta) e^{-C_2|\zeta|\delta^2/4}$. This gives that
\beqlb\label{upbdN01}
\P(Z_{N(a)}\in\mathcal{M}_{B_n}, \bar{Z}_n(\sqrt{n}A)\geq p)\leq C_1\sum_{k=1}^\infty \P(|Z_{N(a)}|=k)e^{-c_1 k}.
\eeqlb
As a consequence of \eqref{local},
\begin{eqnarray}\label{upbdN02}
\P(Z_{N(a)}\in\mathcal{M}_{B_n}, \bar{Z}_n(\sqrt{n}A)\geq p)\leq c_2 p_1^{N(a)}.
\end{eqnarray}
Furthermore, following the same arguments to get (\ref{upbdN01}), we have
\begin{align}\label{upbdN03}
\P(Z_{N(a)}\in\mathcal{M}_{B_n}, |Z_{N(a)}|\geq n, \bar{Z}_n(\sqrt{n}A)\geq p)& \leq C_1\sum_{k=n}^\infty \P(|Z_{N(a)}|=k)e^{-c_1 k}\nonumber\\\
& \leq C_1e^{-c_1 n}\nonumber\\
&=o_n(1)\P(\bar{Z}_n(\sqrt{n}A)\geq p),
\end{align}
where the last equality follows from Lemma \ref{lem01low}.
Then (\ref{Bc}) and (\ref{upbdN03}) yield that
\begin{eqnarray}
\ar\ar\P(\bar{Z}_n(\sqrt{n}A)\geq p)\cr\ar\ar\quad\leq
\P\left( Z_{N(a)}\notin\mathcal{M}_{B_n}\right)+\P\left(Z_{N(a)}\in\mathcal{M}_{B_n}, \bar{Z}_n(\sqrt{n}A)\geq p\right)
\cr\ar\ar\quad=
 o_n(1)\P\left(\bar{Z}_n(\sqrt{n}A)\geq p\right)+\P(Z_{N(a)}\in\mathcal{M}_{B_n}, |Z_{N(a)}|<n, \bar{Z}_n(\sqrt{n}A)\geq p),
\end{eqnarray}
which, together with (\ref{upbdN02}), gives (\ref{mainevent}) and (\ref{rub}).
\end{proof}
More generally, for $\rho_n=1+o_n(1)$, we also have
\[
\P(\bar{Z}_n(\sqrt{n}(\rho_nA))\geq p)\leq C_5 e^{-\log \frac{1}{p_1}I_A(p)\alpha(1-\ez)\sqrt{n}}.
\]
The arguments above still work by remarking (1) of Lemma \ref{LP} which says that
\[
\sup_{A\in\mathcal{A}}|\nu(\rho_n A)-\nu(A)|=O( |\rho_n-1|)=o_n(1)
\]
and that
\[
\sup_{A\in\mathcal{A}}|\nu(A-z)-\nu(A-y)|=O( |y-z|).
\]
In what follows, we fix $\alpha>0$, the real number taken in the previous lemma and take $\ez\in(0,\ez_0)$. Here is the key lemma for the iteration of the upper bound.
\begin{lem}\label{iteration}
Suppose that all assumptions in Theorem \ref{Main01} hold. Take $\rho_n=1+o_n(1)$.
If we have
\begin{equation}\label{upbdL}
\limsup_n\frac{\log\P(\bar{Z}_n(\sqrt{n}\rho_nA)\geq p)}{\sqrt{n}}\leq -LI_A(p),
\end{equation}
for some $L>0$, then
\begin{equation}\label{upbdL0}
\limsup_n\frac{\log\P(\bar{Z}_n(\sqrt{n}\rho_nA)\geq p)}{\sqrt{n}}\leq -F(L)I_A(p),
\end{equation}
where
\[
F(L):=\alpha\inf_{u\in\mR}\left(\log\frac{1}{p_1}+\gamma(u)-uL\right)+L
\]
with $\alpha>0$ chosen in Lemma \ref{rup}.
\end{lem}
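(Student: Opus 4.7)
The plan is to use the intermediate generation $N(a)=\lfloor a(1-\ez)\sqrt n\rfloor$, $a=I_A(p)\alpha$, to split the BRW. Lemma~\ref{rup} (together with the paragraph immediately following its proof, which extends it to $\sqrt n\rho_n A$ in place of $\sqrt n A$) reduces the task to bounding
\[
\P\bigl(E_n,\;\bar Z_n(\sqrt n\rho_n A)\ge p\bigr),\qquad E_n:=\{Z_{N(a)}\in\mathcal M_{B_n},\,|Z_{N(a)}|\le n\}.
\]
On $E_n$, since $\bar Z_n^\zeta(\cdot)$ is a convex combination in $[0,1]$ of the sub-empirical distributions $\bar Z_{n-N(a)}^{\delta_x}(\cdot)$, $x\in\zeta$ (with strictly positive weights $|Z_{n-N(a)}^{\delta_x}|\ge1$ because $p_0=0$), the event $\bar Z_n^\zeta(\sqrt n\rho_n A)\ge p$ forces $\max_{x\in\zeta}\bar Z_{n-N(a)}^{\delta_x}(\sqrt n\rho_n A)\ge p$. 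A union bound combined with the branching Markov property at time $N(a)$ therefore yields
\[
\P\bigl(E_n,\bar Z_n(\sqrt n\rho_n A)\ge p\bigr)\;\le\;\E\Big[\mathbf{1}_{E_n}\sum_{|v|=N(a)}g(S_v)\Big],\qquad g(y):=\P\bigl(\bar Z_{n-N(a)}(\sqrt n\rho_n A-y)\ge p\bigr).
\]

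Next I would feed the inductive hypothesis \eqref{upbdL} into $g$. Writing $\sqrt n\rho_n A-y=\sqrt{n-N(a)}\,\tilde\rho_n(A-z)$ with $z=y/(\sqrt n\rho_n)$ and $\tilde\rho_n=1+o_n(1)$, and using the elementary triangle estimate $I_{A-z}(p)\ge I_A(p)-|z|$ that follows directly from \eqref{defI}, the hypothesis gives, uniformly for $y\in B_n$,
\[
g(y)\;\le\;\exp\bigl\{-L\bigl(I_A(p)\sqrt n-|y|\bigr)(1-o_n(1))\bigr\}.
\]
Substituting and factoring out $e^{-LI_A(p)\sqrt n(1-o_n(1))}$ reduces matters to the tilted branching moment bound
\begin{equation}\label{steptech}
\E\Big[\mathbf{1}_{|Z_{N(a)}|\le n}\sum_{|v|=N(a)}e^{L|S_v|}\Big]\;\le\;n^{O(1)}\bigl(p_1 e^{\Lambda(L)}\bigr)^{N(a)}.
\end{equation}

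The estimate \eqref{steptech} is the heart of the iteration and exhibits the Schr\"oder saving. Without the constraint $|Z_{N(a)}|\le n$ the many-to-one formula would evaluate the moment to $(me^{\Lambda(L)})^{N(a)}$, so \eqref{steptech} captures exactly the factor $(p_1/m)^{N(a)}$ gained by keeping the population atypically small. I would establish it through the exponential change of measure $\mathrm d\widehat{\P}/\mathrm d\P=(me^{\Lambda(L)})^{-N(a)}\sum_{|v|=N(a)}e^{LS_v}$ (using symmetry of $X$ to drop the absolute value at the cost of a factor $2$): under $\widehat\P$ the tree splits into a spine with size-biased offspring distribution $\hat q_k=kp_k/m$ feeding independent standard Galton--Watson off-spine subtrees, and the spine performs a random walk with tilted step distribution $e^{LX}/E[e^{LX}]$. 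Since $p_0=0$ every off-spine subtree survives and grows like $m^{\text{age}}$, so under $\widehat\P$ the event $\{|Z_{N(a)}|\le n\}$ forces all but $O(\log n)$ of the $N(a)$ spine vertices to have exactly one offspring, an event of probability $(p_1/m)^{N(a)(1-o_n(1))}$ up to a polynomial combinatorial correction; multiplying by $(me^{\Lambda(L)})^{N(a)}$ yields \eqref{steptech}. A purely elementary alternative iterates Lemma~\ref{lemFW08} slab-by-slab to reach the same estimate.

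Inserting \eqref{steptech} into the preceding reduction, taking $\frac1{\sqrt n}\log(\cdot)$ and using $N(a)=\alpha(1-\ez)I_A(p)\sqrt n$, one obtains
\[
\limsup_{n\to\infty}\frac{\log\P\bigl(\bar Z_n(\sqrt n\rho_n A)\ge p\bigr)}{\sqrt n}\;\le\;-I_A(p)\bigl[L+\alpha(1-\ez)(\log(1/p_1)-\Lambda(L))\bigr],
\]
and letting $\ez\downarrow0$ recovers $-F(L)I_A(p)$ after noting that Legendre duality of $\gamma$ and $\Lambda$ gives $\inf_{u\in\mR}(\log(1/p_1)+\gamma(u)-uL)=\log(1/p_1)-\Lambda(L)$ for $L>0$ in the domain of $\Lambda$. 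The principal obstacle is \eqref{steptech}: the off-spine subtrees, being supercritical and non-extinguishing, explode as soon as the spine branches, so one must carefully enumerate how the spine can accommodate only $|Z_{N(a)}|\le n$ total descendants over $N(a)\asymp\sqrt n$ generations and show that the combinatorial contribution is merely polynomial in $n$ (hence $o(\sqrt n)$ on the log scale).
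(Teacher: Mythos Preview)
Your overall strategy---intermediate generation $N(a)$, max over particles, union bound, Schr\"oder saving---matches the paper's, and the final Legendre-duality identification $\inf_u(\log(1/p_1)+\gamma(u)-uL)=\log(1/p_1)-\Lambda(L)$ is a clean way to recognize $F(L)$. Two points deserve comment.

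\textbf{The uniformity gap.} The step ``the hypothesis gives, uniformly for $y\in B_n$, $g(y)\le\exp\{-L(I_A(p)\sqrt n-|y|)(1-o_n(1))\}$'' is not justified as written. The hypothesis \eqref{upbdL} is a $\limsup$ for each \emph{fixed} set $A'$ and each fixed sequence $\rho_n\to1$; you are applying it to $A-z$ with $z=y/(\sqrt n\rho_n)$, where $y\in B_n$ ranges over an interval of width $\Theta(\sqrt n)$, so $z$ is $n$-dependent and sweeps a continuum. Nothing in the hypothesis makes the $o_n(1)$ uniform over such shifts. The paper handles exactly this point by discretizing $B_n$ into $M$ fixed subintervals $J_i=(1-\ez)\sqrt n\,(u_i,u_{i+1}]$, enlarging $A$ to $A^+_{2\eta}$ to absorb the quantization error, and then applying \eqref{upbdL} only at the $M$ fixed translated sets $A^+_{2\eta}-(1-\ez)u_i$; afterwards one optimizes and lets $\eta\downarrow0$. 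Your continuous argument becomes this once you insert the partition, so the gap is fillable but real.

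\textbf{The estimate \eqref{steptech}.} You present this as the ``principal obstacle'' and propose a spine change of measure, but in the present model (offspring and displacements independent) it is immediate. Conditioning on the tree $\mathcal T$, every $S_v$ with $|v|=N$ has marginal law $\nu_N$, so
\[
\E\Bigl[\mathbf 1_{|Z_N|\le n}\sum_{|v|=N}e^{L|S_v|}\Bigr]=\E\bigl[|Z_N|;\,|Z_N|\le n\bigr]\cdot\E_{\nu_N}\bigl[e^{L|S|}\bigr]\le n\,\P(|Z_N|\le n)\cdot 2e^{N\Lambda(L)},
\]
and Lemma~\ref{lemFW08} gives $\P(|Z_N|\le n)\le C n^{\chi}p_1^{N}$, which is precisely \eqref{steptech}. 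This is the content of the paper's \eqref{upbd0}--\eqref{upbd2} repackaged; no spine decomposition or enumeration of near-degenerate spines is needed.

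In short, your route and the paper's are the same argument: yours is packaged more compactly via the tilted moment and Legendre duality, the paper's via an explicit spatial partition. The partition, however, is not a stylistic choice but the device that supplies the missing uniformity; once you insert it, the two proofs coincide.
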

\begin{remark}
By Lemma \ref{rup}, we could take $L=\alpha\log \frac{1}{p_1}$. One can see that $F(L)\leq \bar{\Lambda}(p_1)$ for $L\leq \bar{\Lambda}(p_1)$ in Lemma \ref{A1}.
\end{remark}
\begin{proof}Recall that $a=\alpha I_A(p)$ is fixed here. By \eqref{mainevent}, it suffices to consider $\P(Z_{N(a)}\in\mathcal{M}_{B_n}, |Z_{N(a)}|<n, \bar{Z}_n(\sqrt{n}\rho_nA)\geq p)$ with $N(a)=[a(1-\ez)\sqrt{n}]$. In the rest of this proof, we write $N$ and $B$ for $N(a)$  and $B_n$, respectively. Let $\rho'_n:=\rho_n\frac{\sqrt{n}}{\sqrt{n-N}}$.
 We  observe that  given $\{Z_N=\zeta\}$,
\begin{eqnarray}\label{iter02}
\bar{Z}_n(\sqrt{n}\rho_nA)=\bar{Z}^\zeta_{n-N}(\sqrt{n}\rho_nA)\leq \max_{z\in\zeta}\bar{Z}_{n-N}^z(\sqrt{n-N} (\rho'_nA)),
\end{eqnarray}
where the last inequality follows from the elementary inequality
\[
\frac{\sum_{i=1}^k a_i}{\sum_{i=1}^k b_i}\leq \max_{i\leq k}\frac{a_i}{b_i}, \quad\forall a_i\geq 0,\, b_i>0.
\]
Then (\ref{iter02}) gives
\begin{eqnarray}\label{iter01}
\ar\ar\P(Z_N\in\mathcal{M}_B, |Z_N|<n, \bar{Z}_n(\sqrt{n}\rho_nA)\geq p)\cr
\ar\ar\quad\leq \P\left(Z_N\in\mathcal{M}_B, |Z_N|<n, \max_{z\in Z_N}\bar{Z}_{n-N}^z(\sqrt{n-N} (\rho'_nA))\geq p\right).
\end{eqnarray}
For any $M>0$, let us consider a partition on $B$:
\[
(1-\ez)\sqrt{n}u_0<(1-\ez)\sqrt{n}u_1<\cdots<(1-\ez)\sqrt{n}u_M,
\]
where $u_{i+1}-u_i=\eta:= \frac{2I_A(p)}{M}$ with $u_0=-I_A(p)$ and $u_M=I_A(p)$.  Then the r.h.s of (\ref{iter01}) is less than
\[
\P\left(\bigcup_{i=0}^{M-1}\left\{Z_N\in\mathcal{M}_B, |Z_N|<n, \max_{z\in Z_N\cap J_i }\bar{Z}_{n-N}^z(\sqrt{n-N} (\rho'_nA))\geq p\right\}\right),
\]
where $J_0=\left[(1-\ez)\sqrt{n}u_0,\, (1-\ez)\sqrt{n}u_{1}\right]$ and $$ J_i=\left((1-\ez)\sqrt{n}u_i,\, (1-\ez)\sqrt{n}u_{i+1}\right],\quad 1\leq i\leq M-1.$$
 It follows from Markov property that
\begin{align}\label{upbdA}
&\P(Z_N\in\mathcal{M}_B, |Z_N|<n, \bar{Z}_n(\sqrt{n}\rho_nA)\geq p)\cr
\leq &\sum_{i= 0}^{M-1}\sum_{\zeta\in\mathcal{M}_{B}, |\zeta|\leq n}\P\left(Z_N=\zeta \right)\P\left(\max_{\substack{z\in \zeta\cap J_i}}\bar{Z}_{n-N}^z(\sqrt{n-N} (\rho'_nA))\geq p\right)\cr
\leq &\sum_{i= 0}^{M-1}\sum_{\zeta\in\mathcal{M}_{B}, |\zeta|\leq n}\P\left(Z_N=\zeta \right)\sum_{\substack{z\in \zeta\cap J_i}}\P\left(\bar{Z}_{n-N}(\sqrt{n-N} (\rho'_nA)-z)\geq p\right)\cr
\leq &\sum_{i= 0}^{M-1}\left[\sum_{\zeta\in\mathcal{M}_{B}, |\zeta|\leq n}\P\left(Z_N=\zeta \right) \zeta(J_i)\right]\P\left(\bar{Z}_{n-N}\left(\sqrt{n-N}\rho'_n (A^+_{2\eta}-(1-\ez)u_i)\right)\geq p\right).
\end{align}
The last inequality holds for $\eta>0$ and $n$ large enough, because that for $n$ sufficiently large, for any $z\in J_i$, we have \[\sqrt{n-N} (\rho'_nA)-z\subset\sqrt{n-N}\rho'_n (A_{2\eta}^+-(1-\ez)u_i),\]
where $A^+_{2\eta}=\cup_{x\in A}B(x,2\eta)$ is the $2\eta$-neighbourhood of $A$.
Observe that for any $0\leq i\leq M-1$,
\begin{eqnarray}\label{upbd0}
\sum_{\zeta\in\mathcal{M}_{B}, |\zeta|\leq n}\P\left(Z_N=\zeta\right) \zeta(J_i)
&\leq& n\P\left(|Z_N|\leq n, Z_N(J_i)\geq 1 \right)\nonumber\\
&\leq&  n\E\left[|Z_N|; |Z_N|\leq n\right]\nu_N\left(J_i\right) .
\end{eqnarray}
Again, by \eqref{local},
\begin{equation}\label{upbd1}
\E[|Z_N|; |Z_N|\leq n]\leq  n \P(|Z_N|\leq n) \leq c_3 n^{\chi+1} p_1^N.
\end{equation}
On the other hand, by \eqref{LDPRW} and Chernoff bound,
\begin{equation}\label{upbd2}
\nu_N\left(J_i\right) \leq e^{-N \inf_{u_i\leq u\leq u_{i+1}}\gamma(u/a)}.
\end{equation}
With (\ref{upbdA}) in hand, plugging \eqref{upbd1} and \eqref{upbd2} into \eqref{upbd0} yields that
\begin{multline}\label{upbd0+}
\P(Z_N\in\mathcal{M}_B, |Z_N|<n, \bar{Z}_n(\sqrt{n}\rho_nA)\geq p)\\
\leq c_3 n^{\chi+2}p_1^N\sum_{i=0}^{M-1}e^{-N \inf_{u_i\leq u\leq u_{i+1}}\gamma(u/a)}\P\left(\bar{Z}_{n-N}\left(\sqrt{n-N}\rho'_n (A_{2\eta}^+-(1-\ez)u_i)\right)\geq p\right).
\end{multline}
It remains to bound
\(
\P\left(\bar{Z}_{n-N}(\sqrt{n-N}\rho'_n (A_{2\eta}^+-(1-\ez)u_i))\geq p\right)
\).
Here we are going to use the assumption \eqref{upbdL}. We first consider $A^+_{2\eta}-(1-\ez)u_i$ and the corresponding $I_{A^+_{2\eta}-(1-\ez)u_i}(p)$. According to  (5) of Lemma \ref{LP}, we have for any $x\in\mR$, $$\lambda((\partial A)_{2\eta}^+)+\nu(A-x)\geq \nu(A_{2\eta}^+-x)\geq \nu(A-x).$$ We take $\eta>0$ sufficiently small (i.e., $M$ large enough) so that $\lambda((\partial A)^{2\eta})\leq \delta/2,$ which  ensures that for any $u\in\mR$,
\begin{equation}\label{Aeta}
I_{A-u}(p-\delta/2)\leq I_{A_{2\eta}^+-u}(p)\leq I_{A-u}(p).
\end{equation}
Moreover, by \eqref{nvB}, $\sup_{u_0\leq u\leq u_M}\nu(A^+_{2\eta}-(1-\ez)u)\leq p-\delta/2$. So, we can apply \eqref{upbdL} and obtain that
\begin{align*}
\limsup_{n\rightarrow\infty}\frac{1}{\sqrt{n}}\log\P\left(\bar{Z}_{n-N}(\sqrt{n-N}\rho'_n (A_{2\eta}^+-(1-\ez)u_i))\geq p\right)\leq& -LI_{A^+_{2\eta}-(1-\ez)u_i}(p)\\
\leq & -L I_{A-(1-\ez)u_i}(p-\delta/2).
\end{align*}
Let us now introduce $x_+(p):=\inf\{x\geq0:\nu(A-x)\geq p\}$ and $x_-(p):=\inf\{x\geq0: \nu(A+x)\geq p\}$. Clearly $I_A(p)=x_+(p)\wedge x_-(p)$. For $u\in(u_0,u_M)$,
\[
I_{A-u}(p)=(x_+(p)-u)\wedge(x_-(p)+u).
\]
By (1) of Lemma \ref{LP}, we can choose $\delta>0$ sufficiently small such that $x_\pm(p-\delta/2)\in [x_\pm(p)(1-\ez), x_\pm(p))$. This brings that
\[
I_{A-(1-\ez)u_i}(p-\delta/2)\geq (1-\ez) ((x_+(p)-u_i)\wedge(x_-(p)+u_i)).
\]
It implies that
\[
\limsup_{n\rightarrow\infty}\frac{1}{\sqrt{n}}\log\P\left(\bar{Z}_{n-N}(\sqrt{n-N}\rho'_n (A_{2\eta}^+-u_i))\geq p\right)\leq -L(1-\ez) ((x_+(p)-u_i)\wedge(x_-(p)+u_i)).
\]
As a result, \eqref{upbd0+} entails that
\begin{align}\label{upbdB}
&\limsup_{n\rightarrow\infty}\frac{1}{\sqrt{n}}\log\P(Z_N\in\mathcal{M}_B, |Z_N|<n, \bar{Z}_n(\sqrt{n}\rho_nA)\geq p) \cr
\leq &-(1-\ez)\min_{0\leq i\leq M-1}\left\{a\log \frac{1}{p_1}+a\inf_{u_i\leq u\leq u_{i+1}}\gamma(u/a)+L ((x_+(p)-u_i)\wedge(x_-(p)+u_i))\right\}\cr
\leq & -(1-\ez)\inf_{u_0\leq u\leq u_M}\left\{a\log \frac{1}{p_1}+a\gamma(u/a)+L ((x_+(p)-u)\wedge(x_-(p)+u))\right\}+\eta L,
\end{align}
as $u_{i+1}-u_i=\eta$. Observe that for any $u_0\leq u\leq u_M$,
\[
a\log \frac{1}{p_1}+a\gamma(u/a)+L (x_+(p)-u)\geq \inf_{u\in\mR}\left\{a\log \frac{1}{p_1}+a\gamma(u/a)+L (x_+(p)-u)\right\},
\]
and
\[
a\log \frac{1}{p_1}+a\gamma(u/a)+L (x_-(p)+u)\geq \inf_{u\in\mR}\left\{a\log \frac{1}{p_1}+a\gamma(u/a)+L (x_-(p)+u)\right\}.
\]
Recall that $a=\alpha I_A(p)$ and that the symmetry of the distribution of step size implies that $\gamma(u)=\gamma(-u)$. Therefore,
\begin{eqnarray*}
\ar\ar\inf_{u_0\leq u\leq u_M}\left\{a\log \frac{1}{p_1}+a\gamma(u/a)+L ((x_+(p)-u)\wedge(x_-(p)+u))\right\}\\
\ar\ar\quad\geq  \inf_{u\in\mR}\left\{a\log \frac{1}{p_1}+a\gamma(u/a)+L (x_+(p)-u)\right\} \wedge \inf_{u\in\mR}\left\{a\log \frac{1}{p_1}+a\gamma(u/a)+L (x_-(p)+u)\right\}\\
\ar\ar\quad= \inf_{u\in\mR}\left\{a\log \frac{1}{p_1}+a\gamma(u/a)+Lu\right\}+L(x_+(p)\wedge x_-(p))\\
\ar\ar\quad=F(L) I_A(p),
\end{eqnarray*}
 which, together with (\ref{mainevent})and (\ref{upbdB}), implies (\ref{upbdL0}), by letting $\eta\downarrow0$ and $\ez\downarrow0$ in (\ref{upbdB}).  We complete the proof.
\end{proof}

Now we are prepared to prove \eqref{lsup}.
\begin{proof}[{\bf Proof of \eqref{lsup}}]
We begin with the rough bound in \eqref{rub}. Let $L_0=-\alpha\log p_1$ and  \[
L_k:=F(L_{k-1}),\quad\forall k\geq1.
\]
In view of Lemma \eqref{iteration}, by iteration, we get that for any $k\geq0$,
\[
\limsup_{n\rightarrow\infty}\frac{1}{\sqrt{n}}\log {\mbb P}\left(\bar{Z}_n(\sqrt{n}A)\geq p\right)\leq -I_A(p)L_k.
\]
By Lemma \ref{A1}, $\lim_{k\rightarrow\infty}L_k=\bar{\Lambda}(p_1)$. We thus conclude \eqref{lsup}.

\end{proof}

\subsection{Proof of Theorem \ref{Main02}: $I_A(p)=\infty$}
The idea of the proof is mainly borrowed from Louidor and Perkins \cite[Section 2.3.2]{LP15}. Recall that $J_A(p)=\inf\l\{y: \sup_{x\in {\mbb R}}\nu((A-x)/\sqrt{1-y})\geq p,\, y\in[0,1]\r\}$.

\begin{lemma}\label{lem02low}  Suppose that all assumptions in Theorem \ref{Main02} hold. Then
\begin{eqnarray}\label{lem02low01}
\liminf_{n\rightarrow\infty}\frac{1}{n}\log {\mbb P}\left(\bar{Z}_n(\sqrt{n}A)\geq p\right)\geq J_A(p)\log p_1.
\end{eqnarray}
\end{lemma}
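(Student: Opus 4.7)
The approach parallels the lower bound in Lemma~\ref{lem01low} with two key changes: the intermediate generation is now of order $\Theta(n)$ rather than $\Theta(\sqrt n)$, and the single surviving particle at that generation is forced to a position on the CLT scale $x\sqrt n$ rather than at a large-deviation scale. Accordingly, the cost of the motion is paid by the classical CLT (a positive constant) rather than by Cram\'er's theorem, so that the whole exponential rate $r\log p_1$ comes from the single-branch event $\{|Z_{t_n}|=1\}$ at generation $t_n=\lfloor rn\rfloor$.

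Fix $\ez>0$. Using continuity of $J_A$ at $p$ together with part~(3) of Lemma~\ref{LP} (applied to some $p'\in(p,1)$ with $J_A(p')\leq J_A(p)+\ez$), choose $\delta>0$, $r\in[J_A(p),J_A(p)+\ez]$ and $x\in\mR$ such that $\nu((A-x)/\sqrt{1-r})\geq p+\delta$. Let $t_n:=\lfloor rn\rfloor$, $m:=n-t_n$ (so $m/n\to 1-r$), and, for a small $\tilde\ez>0$ to be fixed later, $I_n:=[(x-\tilde\ez)\sqrt n,\,x\sqrt n]$. By the Markov property at time $t_n$ together with the independence of branching and motion,
\[
\P\bigl(\bar Z_n(\sqrt n A)\geq p\bigr)\ \geq\ p_1^{t_n}\,\nu_{t_n}(I_n)\,\inf_{y\in I_n}\P\bigl(\bar Z_m(\sqrt n A-y)\geq p\bigr).
\]
The factor $\nu_{t_n}(I_n)$ converges to $\nu\bigl([(x-\tilde\ez)/\sqrt r,\,x/\sqrt r]\bigr)>0$ by the classical CLT, since $t_n/n\to r>0$ and $\E X^2=1$.

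For the last factor, fix $y\in I_n$ and set $\tilde x:=y/\sqrt n\in[x-\tilde\ez,x]$. Then
\[
\sqrt n A-y\ =\ \sqrt m\,a_n\,A^*(\tilde x),\qquad A^*(\tilde x):=\tfrac{A-\tilde x}{\sqrt{1-r}},\quad a_n:=\sqrt{\tfrac{n(1-r)}{m}}\to 1,
\]
so Lemma~\ref{lemuniform} applied with the fixed set $A^*(\tilde x)$ yields $\bar Z_m(\sqrt n A-y)\to\nu(A^*(\tilde x))$ a.s. By Lipschitz continuity of $\nu$ in shifts (Lemma~\ref{LP}(1)), shrinking $\tilde\ez$ if needed guarantees $\nu(A^*(\tilde x))\geq p+\delta/2$ uniformly in $\tilde x\in[x-\tilde\ez,x]$. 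The remaining uniformity in $y\in I_n$ of $\P(\bar Z_m(\sqrt n A-y)\geq p)\to 1$ is obtained by discretizing $I_n$ into finitely many subintervals, applying Lemma~\ref{lemuniform} at each endpoint, and sandwiching the general set $\sqrt n A-y$ between $\sqrt m$-scaled perturbations of the endpoint sets using Lemma~\ref{LP}(5). Combining, $\P(\bar Z_n(\sqrt n A)\geq p)\geq c\,p_1^{t_n}$ for some $c>0$ and all large $n$, whence
\[
\liminf_{n\to\infty}\frac1n\log\P\bigl(\bar Z_n(\sqrt n A)\geq p\bigr)\ \geq\ r\log p_1\ \geq\ (J_A(p)+\ez)\log p_1,
\]
using $\log p_1<0$. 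Letting $\ez\downarrow 0$ gives the claim. The only delicate point is the uniformity in $y$ just discussed, which plays the role of the passage from Lemma~\ref{lemuniform} to Lemma~\ref{lemloc}; everything else is a direct translation of the Schr\"oder-$I_A(p)<\infty$ argument to the CLT scale.
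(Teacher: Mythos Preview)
Your proof is correct and follows essentially the same strategy as the paper's: force a single branch up to generation $t_n\approx rn$, place that particle in a window around $x\sqrt n$ at CLT cost $\Theta(1)$, and then invoke Lemma~\ref{lemuniform} for the remaining $n-t_n$ generations. The only difference worth flagging is how the infimum over $y\in I_n$ is handled. The paper avoids your discretization-and-sandwich step entirely by passing directly to the single intersection set $\bigcap_{y'\in[x-\eta,x+\eta]}(A-y')$, which is contained in $A-y/\sqrt n$ for every $y$ in the window; one application of Lemma~\ref{lemuniform} to that fixed set then suffices, after noting (via Lemma~\ref{LP}(5)) that $\nu\bigl(\bigcap_{y'}(A-y')/\sqrt{1-r}\bigr)\geq p+\delta$ for $\eta$ small enough. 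Your discretization, once the sandwiching is actually written out, reduces to this same intersection trick on each subinterval, so it is correct but slightly more laborious than necessary.
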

\begin{proof} Since $J_A(p)$ is continuous at $p$, then by (3) in Lemma \ref{LP},
for any $\ez>0$ small enough, we may find $r\in (0, 1)$, $x\in {\mbb R}$ and $\delta>0$, $\eta>0$, such that
$$
J_A(p)-\ez<r< J_A(p)+\ez\quad\text{and}\quad \nu(\cap_{y\in[x-\eta,x+\eta]}(A-y)/\sqrt{1-r})\geq p+\delta.
$$
Set
$$
t_n=rn+x\sqrt{n};\quad m=n-t_n;\quad B_n:=[(x-\eta)\sqrt{n}, (x+\eta)\sqrt{n}].
$$
Observe that
\[
\P( \bar{Z}_n(\sqrt{n}A)\geq p)\geq \P( \bar{Z}_n(\sqrt{n}A)\geq p, Z_{t_n}\in \mathcal{M}_{B_n}, |Z_{t_n}|=1).
\]
Applying Markov property at time $t_n$ implies that
\begin{align*}
\P( \bar{Z}_n(\sqrt{n}A)\geq p)\geq& \P( Z_{t_n}\in \mathcal{M}_{B_n}, |Z_{t_n}|=1)\inf_{y\in B_n}\P(\bar{Z}_{n-t_n}^{\delta_y}(\sqrt{n}A)\geq p)\\
=& p_1^{t_n}\nu_{t_n}(B_n)\inf_{y\in [(x-\eta)\sqrt{n}, (x+\eta)\sqrt{n}]}\P(\bar{Z}_{m}(\sqrt{n} A-y)\geq p)\\
\geq & p_1^{t_n}\nu_{t_n}(B_n)\P\Big(\bar{Z}_m(\sqrt{m}\rho_n\cap_{y\in[x-\eta,x+\eta]}\frac{(A-y)}{\sqrt{1-r}})\geq p\Big)
\end{align*}
where $\rho_n=\frac{\sqrt{n(1-r)}}{\sqrt{m}}\rightarrow 1$. By Lemma \ref{LP},
\[
\left|\nu_{m}\left(\sqrt{m}\rho_n\cap_{y\in[x-\eta,x+\eta]}
\frac{(A-y)}{\sqrt{1-r}}\right)-\nu\left(\cap_{y\in[x-\eta,x+\eta]}\frac{(A-y)}{\sqrt{1-r}}\right)\right|=o_n(1).
\]
So for $n$ large enough,
\[
\nu_{m}\left(\sqrt{m}\rho_n\cap_{y\in[x-\eta,x+\eta]}\frac{(A-y)}{\sqrt{1-r}}\right)\geq \nu\left(\cap_{y\in[x-\eta,x+\eta]}\frac{(A-y)}{\sqrt{1-r}}\right)-\delta/2\geq p+\delta/2.
\]
This, together with Lemma \ref{LP} entails that
\begin{align*}
&\P( \bar{Z}_n(\sqrt{n}A)\geq p)\\
\geq&  p_1^{t_n}\nu_{t_n}(B_n)\P\left(\bar{Z}_m\left(\sqrt{m}\rho_n\cap_{y\in[x-\eta,x+\eta]}\frac{(A-y)}{\sqrt{1-r}}\right)\geq \nu_{m}\left(\sqrt{m}\rho_n\cap_{y\in[x-\eta,x+\eta]}\frac{(A-y)}{\sqrt{1-r}}\right)-\delta/2\right)\\
\geq & p_1^{t_n}\nu_{t_n}(\sqrt{n}[x-\eta,x+\eta])(1+o_n(1)).
\end{align*}
Note that $\nu_{t_n}(\sqrt{n}[x-\eta,x+\eta])=\Theta(1)$ by classical central limit theorem. As a consequence,
\[
\liminf_{n\rightarrow\infty}\frac{1}{n}\log {\mbb P}\left(\bar{Z}_n(\sqrt{n}A)\geq p\right)\geq r\log p_1\geq (J_A(p)+\ez)\log p_1.
\]
We obtain \eqref{lem02low01} by letting $\ez\downarrow 0$.
\end{proof}
\smallskip
\begin{lemma}\label{lem02up}  Suppose that all assumptions in Theorem \ref{Main02} hold. Then for any $\ez>0$ small enough,
\begin{eqnarray}\label{lem02up01}
\limsup_{n\rightarrow\infty}\frac{1}{n}\log {\mbb P}\left(\bar{Z}_n(\sqrt{n}A)\geq p\right)\leq (J_A(p)-\ez)\log p_1.
\end{eqnarray}
\end{lemma}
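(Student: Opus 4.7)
The plan is to adapt the intermediate-generation strategy from the upper bound of Theorem~\ref{Main01} to this regime, now taking $t_n$ of order $\Theta(n)$ rather than $O(\sqrt n)$ (since $I_A(p)=\infty$ rules out a ballistic random walk along a single branch reaching a finite level $x\sqrt n$). Concretely, fix $\ez>0$ small, set $r:=J_A(p)-\ez$ and $t_n:=\lfloor rn\rfloor$, and aim to bound $\P(\bar Z_n(\sqrt n A)\ge p)$ by the sum of two terms obtained from the dichotomy $|Z_{t_n}|\le M_n$ vs.\ $|Z_{t_n}|>M_n$, where $M_n$ is linear in $n$. The first regime is handled by the Schr\"oder-type small-population estimate Lemma~\ref{lemFW08}; the second by the Markov property at time $t_n$ combined with the concentration inequality Lemma~\ref{LP+}.

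The deterministic input needed to run the second step is that, on our space scale, the conditional expectation of $\bar Z_n(\sqrt n A)$ given $Z_{t_n}=\zeta$ is bounded strictly below $p$, uniformly in $\zeta$. Using the scaling identity
\[
\sqrt n A-x=\sqrt{n-t_n}\Bigl(\tfrac{\sqrt n}{\sqrt{n-t_n}}A-\tfrac{x}{\sqrt{n-t_n}}\Bigr)
\]
together with Lemma~\ref{LP}(6) to replace $\nu_{n-t_n}$ by $\nu$, and the Lipschitz estimates of Lemma~\ref{LP}(1) to pass from $\sqrt{n/(n-t_n)}$ to $1/\sqrt{1-r}$, I would show that
\[
\sup_{x\in\mR}\nu_{n-t_n}(\sqrt n A-x)\ \longrightarrow\ \sup_{y\in\mR}\nu\bigl((A-y)/\sqrt{1-r}\bigr)=:q(r).
\]
By the very definition of $J_A(p)$ as an infimum, $r<J_A(p)$ forces $q(r)<p$; setting $\delta:=(p-q(r))/3>0$, one has for all $n$ large enough and every finite $\zeta\in\mathcal{M}$,
\[
\tfrac{1}{|\zeta|}\sum_{x\in\zeta}\nu_{n-t_n}(\sqrt n A-x)\le \sup_{x\in\mR}\nu_{n-t_n}(\sqrt n A-x)\le p-2\delta.
\]

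With this in hand, choose $M_n:=Kn$ with $K>r\log(1/p_1)/(C_2\delta^2)$ fixed, where $C_2$ is the constant of Lemma~\ref{LP+}. On $\{|Z_{t_n}|\le M_n\}$, Lemma~\ref{lemFW08} gives
\[
\P(|Z_{t_n}|\le M_n)\le C_3\sum_{k\le M_n}k^{\chi-1}p_1^{t_n}\le C(Kn)^{\chi}\,p_1^{t_n},
\]
whose $\tfrac{1}{n}\log$ tends to $r\log p_1=(J_A(p)-\ez)\log p_1$. On $\{|Z_{t_n}|>M_n\}$, conditioning on $Z_{t_n}=\zeta$ via the Markov property and applying Lemma~\ref{LP+} together with the deterministic bound above yields
\[
\P\bigl(\bar Z_{n-t_n}^\zeta(\sqrt n A)\ge p\bigr)\le \P\Bigl(\bar Z_{n-t_n}^\zeta(\sqrt n A)\ge \tfrac{1}{|\zeta|}\sum_{x\in\zeta}\nu_{n-t_n}(\sqrt n A-x)+\delta\Bigr)\le C_1 e^{-C_2\delta^2|\zeta|},
\]
and summing over $\zeta$ with $|\zeta|>Kn$ gives an overall bound of $C_1 e^{-C_2\delta^2 K n}$. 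The choice of $K$ makes the $\tfrac{1}{n}\log$ of this term strictly less than $r\log p_1$, so the max of the two contributions behaves like $p_1^{rn(1+o(1))}$, and letting $\ez\downarrow 0$ delivers \eqref{lem02up01}.

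The main technical hurdle is the deterministic uniform-in-$x$ estimate $\sup_{x}\nu_{n-t_n}(\sqrt n A-x)\le p-2\delta$: the scaling rearrangement is routine, but one must check that Lemma~\ref{LP}(6) applies uniformly in the translation parameter (which it does, since $\sqrt{n/(n-t_n)}$ stays in a compact subset of $(0,\infty)$) and that $y\mapsto \sup_{x}\nu((A-x)/\sqrt{1-y})$ is continuous at $r$, so that the supremum and the limit $n\to\infty$ can be exchanged---a fact which follows from the uniform Lipschitz continuity of Lemma~\ref{LP}(1).
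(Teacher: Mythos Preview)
Your proposal is correct and follows essentially the same route as the paper: set $t_n=\lfloor (J_A(p)-\ez)n\rfloor$, use the definition of $J_A(p)$ to obtain the uniform deterministic bound $\sup_{x}\nu_{n-t_n}(\sqrt n A-x)\le p-\delta$ for large $n$, then combine Lemma~\ref{LP+} with Lemma~\ref{lemFW08}. The only difference is that the paper dispenses with your dichotomy $|Z_{t_n}|\lessgtr M_n$ by applying Lemma~\ref{LP+} to every $\zeta$ and then summing $\sum_{k\ge1}\P(|Z_{t_n}|=k)e^{-c_4 k}\le c_6\,p_1^{t_n}$ directly via \eqref{local}, which is marginally cleaner but equivalent.
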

\begin{proof}
For $\ez\in(0, J_A(p))$ small enough, set
$
t_n=\lfloor(J_A(p)-\ez)n\rfloor.
$
Then
\begin{equation}\label{upbd3}
\P(\bar{Z}_n(\sqrt{n}A)\geq p)=\sum_{\zeta\in \mathcal{M}}\P(\bar{Z}_{n-t_n}^{\zeta}(\sqrt{n}A)\geq p)\P(Z_{t_n}=\zeta).
\end{equation}
By the definition of $J_A(p)$, there exists $\dz>0$ such that for $\ez'\in [\ez, 2\ez]$,
\begin{eqnarray}
\sup_{y\in\mR}\nu\left(\frac{A-y}{\sqrt{1-J_A(p)+\ez'}}\right)\leq p-\dz.
\end{eqnarray}
(1) and (5) of Lemma \ref{LP} show that for $n$ large enough,
$$
\frac{1}{|\zeta|}\sum_{y\in \zeta}\nu_{n-t_n}(\sqrt{n}A-y)\leq \frac{1}{|\zeta|}\sum_{y\in \zeta}\nu(\frac{\sqrt{n}}{\sqrt{n-t_n}}A-\frac{y}{\sqrt{n-t_n}})+\delta/2\leq p-\delta/2.
$$
This implies that
\begin{align*}
\P\left(\bar{Z}_{n-t_n}^{\zeta}(\sqrt{n}A)\geq p\right)\leq &\P\left(\bar{Z}_{n-t_n}^{\zeta}(\sqrt{n}A)\geq \frac{1}{|\zeta|}\sum_{y\in \zeta}\nu_{n-t_n}(\sqrt{n}A-y)+\dz/2\right),
\end{align*}
which by Lemma \ref{LP+} is less than $C_1e^{-C_2\delta^2|\zeta|/4}$. Going back to \eqref{upbd3} and using \eqref{local}, we have
\begin{align}\label{lem02up02}
\P(\bar{Z}_n(\sqrt{n}A)\geq p)
\leq &\sum_{\zeta\in \mathcal{M}}C_1e^{-C_2\delta^2|\zeta|/4}\P(Z_{t_n}=\zeta)\nonumber\\
= &\sum_{k=1}^\infty C_1\P(|Z_{t_n}|=k)e^{-c_4 k}\nonumber\\
\leq & c_5 \sum_{k=1}^\infty p_1^{t_n} k^{\chi-1}e^{-c_4 k}\leq c_6 p_1^{t_n}.
\end{align}
This yields (\ref{lem02up01}) immediately.
\end{proof}

Theorem \ref{Main02} follows directly from the lemmas \ref{lem02low} and \ref{lem02up}.

\section{B\"{o}ttcher case}

In this section, we suppose that $p_0=p_1=0$. As we claimed in the introduction, different tail distributions of step size bring out different regimes. To obtain Theorems \ref{Main03} and \ref{Main04}, we need to treat the sub-exponential and the super-exponential decaying tails differently.

\subsection{Proof of Theorem \ref{Main03}: step size has Weibull tail distribution}

\subsubsection{Proof when step size has (sub)-exponential decay}

In this section, we assume that $2\leq b<B\leq \infty$, $\P(X>z)=\Theta(1) e^{-\lambda z^\alpha}$ as $z\rightarrow\infty$ with $\alpha\in(0,1]$, $\lambda>0$ and also that $I_A(p)<\infty$ and $I_A(\cdot)$ is continuous at $p=\nu(A)+\Delta$. We are devoted to proving
\begin{equation}\label{limsubex}
\lim_{n\rightarrow\infty}\frac{1}{n^{\alpha/2}}\log \P(\bar{Z}_n(\sqrt{n} A)-\nu(A)\geq \Delta)=-\lambda I_A(p)^\alpha.
\end{equation}

\paragraph{Lower bound of \eqref{limsubex}:}
Suppose that at the first generation, the root gives birth to exactly $b$ children, denoted by $\rho_1,\cdots,\rho_b$. Moreover, suppose that their positions are $S_{\rho_1}\in [(x-\eta)\sqrt{n}, (x+\eta)\sqrt{n}]$, $S_{\rho_i}\in [-M, M]$ for $2\leq i\leq b$. Here we take $M>0$ such that
\[
\P(|X|\leq M)\geq 1/2.
\]
As $I_A(\cdot)$ is continuous and finite at $p$, for any $\ez>0$ small enough, there exist $x\in\mR$ and $\eta>0,\delta>0$ such that
\[
\inf_{y\in[x-\eta,x+\eta]}\nu(A-y)\geq p+\delta,\quad |x|=I_A(p)+\ez.
\]
By Lemma \eqref{lemloc}, given $\mathbf{E}:=\left\{|Z_1|=b, S_{\rho_1}\in [(x-\eta)\sqrt{n}, (x+\eta)\sqrt{n}], S_{\rho_i}\in [-M, M], \forall i=2,\cdots,b\right\}$,
\[
\liminf_{n\rightarrow\infty}\bar{Z}_n(\sqrt{n}A)\geq \frac{(p+\delta)W_1+\sum_{i=2}^b \nu(A)W_i}{\sum_{i=1}^b W_i},
\]
where $W_i,\ i=1, 2,\cdots, b$ are i.i.d. copies of $W$. This shows that
\begin{align*}
\liminf_{n\rightarrow\infty}\P(\bar{Z}_n(\sqrt{n} A)\geq p\vert \mathbf{E})\geq &\P\left(\liminf_{n\rightarrow\infty} \bar{Z}_n(\sqrt{n}A)\geq p+\delta/2\big{\vert}\mathbf{E}\right)\\
\geq &\P\left(\frac{(p+\delta)W_1+\sum_{i=2}^b \nu(A)W_i}{\sum_{i=1}^b W_i}\geq p+\delta/2\right)\\=:& C_{A,p,\delta, b}.
\end{align*}
Since $B>b$,  $W$ has a continuous positive density on $(0, \infty)$; see Athreya and Ney \cite[Chapter II, Lemma 2]{AN72}. So $C_{A,p,\delta, b}$ is a positive real number. Consequently,
\begin{align*}
 \P(\bar{Z}_n(\sqrt{n} A)-\nu(A)\geq \Delta)\geq & \P\left( \bar{Z}_n(\sqrt{n} A)\geq p\vert \mathbf{E}\right)\P(\mathbf{E})\\
 \geq & \Theta(1)p_b e^{-\lambda (|x|-\eta)^{\alpha}n^{\alpha/2}}(1+o_n(1))\left(\frac{1}{2}\right)^{b-1}.
\end{align*}
Taking limits yields that
\begin{align*}
\liminf_{n\rightarrow\infty}\frac{1}{n^{\alpha/2}}\log \P(\bar{Z}_n(\sqrt{n} A)-\nu(A)\geq \Delta)\geq -\lambda (I_A(p)+\ez-\eta)^{\alpha}.
\end{align*}
Letting $\ez\downarrow0$ and $\eta\downarrow 0$ gives that
\[
\liminf_{n\rightarrow\infty}\frac{1}{n^{\alpha/2}}\log \P(\bar{Z}_n(\sqrt{n} A)-\nu(A)\geq \Delta)\geq -\lambda I_A(p)^{\alpha}.
\]

\paragraph{Upper bound of \eqref{limsubex}:}
 Take an intermediate generation $t_n=\lfloor t\log n\rfloor$ with some $t>0$. Let $B_n=[(-I_A(p)+\ez)\sqrt{n}, (I_A(p)-\ez)\sqrt{n}]$ with $\ez>0$ small enough. Note that there exists $\delta>0$ such that
\begin{equation}\label{nuB}
\sup_{x\in [-I_A(p)+\ez, I_A(p)-\ez]}\nu(A-x)\leq p-\delta.
\end{equation}
Observe that
\begin{align}\label{Main03up}
&\P\left(\bar{Z}_n(\sqrt{n} A)\geq p\right)\cr
&\quad\leq  \P(Z_{t_n}(B_n^c)\geq1)+\P(Z_{t_n}\in \mathcal{M}_{B_n}, \bar{Z}_n(\sqrt{n} A)\geq p)\cr
&\quad\leq  2\P(Z_{t_n}((I_A(p)-\ez)\sqrt{n},\infty)\geq 1)+ \sum_{\zeta\in \mathcal{M}_{B_n}}\P(Z_{t_n}=\zeta)\P( \bar{Z}^\zeta_{n-t_n}(\sqrt{n} A)\geq p).
\end{align}
On the one hand, by Markov inequality,
\begin{equation}\label{nuBup}
\P(Z_{t_n}((I_A(p)-\ez)\sqrt{n},\infty)\geq 1)\leq \E[Z_{t_n}((I_A(p)-\ez)\sqrt{n},\infty)]= m^{t_n}\nu_{t_n}((I_A(p)-\ez)\sqrt{n},\infty).
\end{equation}
It is known (see \cite{Na79}) that
\beqlb\label{nuBup++}
\nu_{t_n}((I_A(p)-\ez)\sqrt{n},\infty)\begin{cases}= e^{-\lambda (I_A(p)-\ez)^{\alpha}n^{\alpha/2}+o(n^{\alpha/2})}, & \alpha<1;\cr
\leq e^{-(\lambda-\ez)(I_A(p)-\ez)\sqrt{n}}\E[e^{(\lambda-\ez)X}]^{t_n}, &\alpha=1,
\end{cases}
\eeqlb
where for $\alpha=1$, we use Markov inequality again by noting that $\E[e^{-(\lambda-\ez)X}]<\infty$ for $\ez\in(0,\lambda)$.
On the other hand, for any $\zeta\in \mathcal{M}_{B_n}$, because of \eqref{nuB}, one has
\[
\frac{1}{|\zeta|}\sum_{x\in\zeta}\nu(A-\frac{x}{\sqrt{n}})\leq p-\delta.
\]
Again by (6) of Lemma \ref{LP}, for $n$ large enough,
\[
\frac{1}{|\zeta|}\sum_{x\in \zeta}\nu_{n-t_n}(\sqrt{n}A-x)\leq p-\delta/2.
\]
This, combined with Lemma \ref{LP+}, implies that
\begin{align}\label{nuBup+}
\P( \bar{Z}^\zeta_{n-t_n}(\sqrt{n} A)\geq p)\leq &\P\left(\bar{Z}^\zeta_{n-t_n}(\sqrt{n} A)\geq \frac{1}{|\zeta|}\sum_{x\in \zeta}\nu_{n-t_n}(\sqrt{n}A-x)+\delta/2\right)\nonumber\\
\leq & C_1 e^{-C_2|\zeta|\delta^2/4}.
\end{align}
In view of \eqref{nuBup}, \eqref{nuBup++} and \eqref{nuBup+}, \eqref{Main03up} becomes
\begin{eqnarray*}
\ar\ar\P\left(\bar{Z}_n(\sqrt{n} A)\geq p\right)\cr\ar\ar\quad\leq  2m^{t_n}e^{-(\lambda-\ez)(I_A(p)-\ez)^{\alpha}n^{\alpha/2}+o(n^{\alpha/2})}+C_1\sum_{\zeta\in \mathcal{M}_{B_n}}\P(Z_{t_n}=\zeta)e^{-C_2|\zeta|\delta^2/4}\cr\ar\ar\quad
\leq 2m^{t_n}e^{-(\lambda-\ez) (I_A(p)-\ez)^{\alpha}n^{\alpha/2}+o(n^{\alpha/2})}+C_1e^{-c_7b^{t_n}},
\end{eqnarray*}
as $|Z_{t_n}|\geq b^{t_n}$. We take $t>\frac{\alpha}{2\log b}$ so that $b^{t_n}\gg n^{\alpha/2}$ and hence
\[
\P( \bar{Z}^\zeta_{n-t_n}(\sqrt{n} A)\geq p)\leq e^{-(\lambda-\ez) (I_A(p)-\ez)^{\alpha}n^{\alpha/2}+o(n^{\alpha/2})}.
\]
We thus obtain that
\[
\limsup_{n\rightarrow\infty}\frac{1}{n^{\alpha/2}}\log \P( \bar{Z}^\zeta_{n-t_n}(\sqrt{n} A)\geq p)\leq -(\lambda-\ez) (I_A(p)-\ez)^{\alpha},
\]
which, together with the lower bound above, concludes \eqref{limsubex}.
\subsubsection{Proof when step size has super-exponential decay}\label{proofsupex}

In this section, we assume that the tail distribution of step size is $\P(X>x)=\Theta(1) e^{-\lambda x^{\alpha}}$ with $\alpha>1$. The embedding tree is assumed to be random with $2\leq b<B\leq +\infty$. We are going to prove the following convergence: for $p\in (\nu(A), 1)$ such that $I_A(p)$ is continuous and finite, we have
\begin{equation}\label{limsuperex}
\lim_{n\rightarrow\infty}\frac{(\log n)^{\alpha-1}}{n^{\alpha/2}}\log\P\left(\bar{Z}_n(\sqrt{n}A)\geq p\right) = -\left(\frac{2\log b\log B}{\alpha(\log B-\log b)}\right)^{\alpha-1} \lambda I_A(p)^{\alpha},
\end{equation}
with the convention that  $\frac{2\log b\log B}{\alpha(\log B-\log b)}=\frac{2\log b}{\alpha}$ if $B=\infty$.

\paragraph{Lower bound of \eqref{limsuperex}:}
According to the definition of $I_A(p)$, for any $\delta>0$ small enough, there exist $x_0\in\mR$, $\ez>0$ and $\eta>0$ such that
\[
I_A(p)< |x_0|\leq I_A(p)+\ez, \ \inf_{y\in[x_0-\eta,x_0+\eta]}\nu(A-y)\geq p+\delta.
\]
Take an integer $d>b$ such that $p_{d}>0$.  (If $B<\infty$,  then we choose $d=B$. If $B=\infty$, we will let $d\rightarrow\infty$ later.)  Let $t_n=\lfloor t_1\log n-t_2\log\log n\rfloor$ with some $t_1, t_2>0$. Then for sufficiently large $n$, let
\[
s_n=\left\lfloor\frac{\log\log n+t_n\log b}{\log d}\right\rfloor,\quad \Upsilon=\frac{2\log b\log d}{\alpha(\log d-\log b)}.
\]
Let us construct a tree ${\bf t}$ of height $t_n$ in the following way.  First, ${\bf t}_{t_n-s_n}:=\{v\in {\bf t}: |v|\leq t_s-s_n\}$ is a $b$-regular tree. Using Neveu's notation \cite{Ne86}, let $\mathbb{U}:=\cup_{n\geq 1}\mathbb{N}_+^n\cup\{\rho\}$ be the infinite Ulam-Harris tree, to code the vertices.  Here denote $u^*=(1, \cdots, 1)$ to be the first individual of the $(t_n-s_n)$-th generation in the lexicographic order. Next,  ${\bf t}(u^*)$ is a $d$-regular tree and $\{{\bf t}(u): u\neq u^*, |u|=t_n-s_n\}$ are all $b$-regular trees, where for any $u\in {\bf t}$, ${\bf t}(u):=\{v\in{\bf t}: u\preceq v\}$ is the subtree of ${\bf t}$ rooted at $u$.
Recall from the very beginning of this paper that $\cal T$ is the embedded Galton-Watson tree. Let ${\cal T}_{t_n}=\{u\in {\cal T}: |u|\leq t_n\}$. Define the following event
\beqlb\label{EVENTE}
\mathcal{E}_{t_n,b,d}=\{{\cal T}_{t_n}={\bf t}, S_u\in [(x_0-\eta)\sqrt{n}, (x_0+\eta)\sqrt{n}], \text{ for all }|u|=t_n\textrm{ s.t. }u\in {\bf t}(u^*)\},
\eeqlb
which means that all the descendants of $u^*$ at the $t_n$-th generation are positioned in the interval $[(x_0-\eta)\sqrt{n}, (x_0+\eta)\sqrt{n}]$. It follows immediately that
\begin{equation}
\P\left(\bar{Z}_n(\sqrt{n}A)\geq p\right)\geq \P\left(\bar{Z}_n(\sqrt{n}A)\geq p\vert \mathcal{E}_{t_n,b,d}\right)\P(\mathcal{E}_{t_n,b,d})
\end{equation}
and that
\begin{equation}\label{keyevent}
\P(\mathcal{E}_{t_n,b,d})\geq p_b^{b^{t_n}}p_d^{d^{s_n}}\P^{\bf t}(S_u\in [(x_0-\eta)\sqrt{n}, (x_0+\eta)\sqrt{n}], \forall u\in {\bf t}(u^*), |u|= t_n),
\end{equation}
where $\P^{\bf t}=\P(\cdot| {\cal T}_{t_n}={\bf t})$. To bound the probability on the R. H. S. of \eqref{keyevent}, let us take the following labels (step sizes):
\[
X_v\in \left[(x_0-\frac{\eta}{2})\frac{\sqrt{n}}{t_n-s_n},(x_0+\frac{\eta}{2})\frac{\sqrt{n}}{t_n-s_n}\right],\quad \forall \, \rho\prec v\preceq  u^*;
\]
and
\[
X_v\in [-M, M],\quad  \forall\,  v\in {\bf t}(u^*) \textrm{ and } |v|\leq t_n,
\]
where $M$ is a fixed real number such that $\P(X\in[-M,M])\geq 1/2$. Observe that for $n$ large enough, for any $u\in{\bf t}(u^*) $ s.t. $|u|=t_n$,
\[
S_u\in \left[(x_0-\frac{\eta}{2})\sqrt{n}-Ms_n, (x_0+\frac{\eta}{2})\sqrt{n}+Ms_n\right]\subset [(x_0-\eta)\sqrt{n}, (x_0+\eta)\sqrt{n}].
\]
As a consequence,
\begin{align*}
&\P^{\bf t}(S_u\in (x_0-\eta)\sqrt{n}, (x_0+\eta)\sqrt{n}], \forall  u\in {\bf t}(u^*), |u|= t_n)\\
\geq &\P\left(X\in \left[(x_0-\frac{\eta}{2})\frac{\sqrt{n}}{t_n-s_n},(x_0+\frac{\eta}{2})\frac{\sqrt{n}}{t_n-s_n}\right]\right)^{t_n-s_n}\times \prod_{1<k\leq s_n}\P(X\in[-M,M])^{d^k}\\
\geq & \left(\frac{1}{2}\right)^{2d^{s_n}}c_8^{t_n-s_n}\exp\left\{-\lambda(t_n-s_n)^{1-\alpha}\left(|x_0|-\frac{\eta}{2}\r)^{\alpha}n^{\alpha/2}\right\}.
\end{align*}
Here we take $t_1=\frac{\alpha}{2\log b}$ and $t_2=\frac{2\alpha}{\log b}$ so that
\[
\frac{n^{\alpha/2}}{(t_n-s_n)^{\alpha-1}}=(1+o_n(1))\Upsilon^{\alpha-1} \frac{n^{\alpha/2}}{(\log n )^{\alpha-1}}\gg d^{s_n}\gg b^{t_n}.
\]
Therefore, by \eqref{keyevent},
\begin{equation}\label{Etbd}
\P(\mathcal{E}_{t_n,b,d})\geq \exp\l\{-\lambda \l(|x_0|-\frac{\eta}{2}\r)^{\alpha}\Upsilon^{\alpha-1}\frac{n^{\alpha/2}}{\l(\log n \r)^{\alpha-1}}+o\l(\frac{n^{\alpha/2}}{(\log n )^{\alpha-1}}\r)\r\}.
\end{equation}
It remains to bound $ \P\left(\bar{Z}_n(\sqrt{n}A)\geq p\vert \mathcal{E}_{t_n,b,d}\right)$. Define
\[
\mathcal{M}_0=\l\{\zeta\in {\cal M}: |\zeta|=d^{s_n}+b^{t_n}-b^{s_n}, \zeta( [(x_0-\eta)\sqrt{n}, (x_0+\eta)\sqrt{n}])\geq d^{s_n}\r\}.
\]
For any $\zeta\in\mathcal{M}_0 $, by considering only the particles in $[(x_0-\eta)\sqrt{n}, (x_0+\eta)\sqrt{n}]$,
\begin{align*}
\frac{1}{|\zeta|}\sum_{x\in \zeta}\nu(\sqrt{n}A-x)\geq &\frac{d^{s_n}}{d^{s_n}+b^{t_n}-b^{s_n}}\inf_{y\in[x_0-\eta,x_0+\eta]}\nu(A-y)\\
\geq &\frac{\log n}{\log n+1}(p+\delta),
\end{align*}
which, combining with (5) of Lemma \ref{LP}, implies that for $n$ large enough,
\[
\frac{1}{|\zeta|}\sum_{x\in \zeta}\nu_{n-t_n}(\sqrt{n}A-x)\geq p+\delta/2.
\]
This means that for any $\zeta\in\mathcal{M}_0 $, \[\l\{\bar{Z}_{n-t_n}^\zeta(\sqrt{n}A)\geq \frac{1}{|\zeta|}\sum_{x\in \zeta}\nu_{n-t_n}(\sqrt{n}A-x)-\delta/2\r\}\subset\{\bar{Z}_{n-t_n}^\zeta(\sqrt{n}A)\geq p\},\] as $p=\nu(A)+\Delta$.  Note that $Z_{t_n}\in\mathcal{M}_0$ given  $\mathcal{E}_{t_n,b,d} $.
Then by Lemma \ref{LP+}, we have
\begin{align*}
&\P(\bar{Z}_n(\sqrt{n}A)\geq p\vert \mathcal{E}_{t_n,b,d})\\
&\geq  \P\left(\bar{Z}_{n-t_n}^\zeta(\sqrt{n}A)\geq \frac{1}{|\zeta|}\sum_{x\in \zeta}\nu_{n-t_n}(\sqrt{n}A-x)-\delta/2\right)\Bigg\vert_{\zeta=Z_{t_n}\in\mathcal{M}_0}\\
&\geq  1-C_1 e^{-C_2d^{s_n}\delta^2/4}\geq \frac{1}{2},
\end{align*}
for all $n$ large enough. This implies that
\[
\P(\bar{Z}_n(\sqrt{n}A)\geq p)\geq \frac{1}{2}\P(\mathcal{E}_{t_n,b,d}),
\]
which,  together with \eqref{Etbd}, gives
\beqlb\label{limsuperexlow}
\liminf_{n\rightarrow\infty}\frac{(\log n)^{\alpha-1}}{n^{\alpha/2}}\log \P(\bar{Z}_n(\sqrt{n}A)\geq p)\geq -\lambda\l(|x_0|-\frac{\eta}{2}\r)^{\alpha}\Upsilon^{\alpha-1}.
\eeqlb
Then we get the lower bound by letting $\delta\downarrow 0$ and $d\uparrow B$.

\paragraph{Upper bound of \eqref{limsuperex}:}  Again, by the definition of $I_A(p)$, for any $\delta>0$ small enough, there exist $\eta>0$ such that
\[
\sup_{|y|\leq I_A(p)-\eta}\nu(A-y)\leq p-\delta.
\]
Let $B_n:=[(-I_A(p)+\eta)\sqrt{n}, (I_A(p)-\eta)\sqrt{n}]$, $t_n=\lfloor\frac{\alpha\log b}{2}\log n\rfloor$. Observe that for any $\zeta\in\mathcal{M}$,
\begin{align*}
\frac{1}{|\zeta|}\sum_{x\in\zeta}\nu(A-\frac{x}{\sqrt{n}})=&\frac{1}{|\zeta|}\sum_{x\in\zeta\cap B_n} \nu(A-\frac{x}{\sqrt{n}})+\frac{1}{|\zeta|}\sum_{x\in\zeta\cap B_n^c} \nu(A-\frac{x}{\sqrt{n}})\\
\leq & p-\delta+ \frac{\zeta(B_n^c)}{|\zeta|},
\end{align*}
which is less than $p-\delta/2$ as soon as $\frac{\zeta(B_n^c)}{|\zeta|}\leq \delta/2$. Further, by (5) of Lemma \ref{LP}, for all $n$ large enough,
\begin{equation}\label{nutp}
{\cal M}_1:=\l\{\zeta\in{\cal M}: \frac{\zeta(B_n^c)}{|\zeta|}\leq \delta/2\r\}\subset\l\{\zeta\in{\cal M}: \frac{1}{|\zeta|}\sum_{x\in\zeta}\nu_{n-t_n}(\sqrt{n}A-x)\leq p-\delta/4\r\}.
\end{equation}
By conditioning on $\{Z_{t_n}=\zeta\}$ for any $\zeta\in{\cal M}_1$, we observe that
\begin{align*}
\P(\bar{Z}_n(\sqrt{n}A)\geq p)\leq & \P(\bar{Z}_{t_n}(B_n^c)>\delta/2)+\P(\bar{Z}_n(\sqrt{n}A)\geq p,\bar{Z}_{t_n}(B_n^c)\leq \delta/2)\\
=&\P(\bar{Z}_{t_n}(B_n^c)>\delta/2)+\sum_{\zeta\in \mathcal{M}_1}\P(Z_{t_n}=\zeta)\P\left(\bar{Z}^\zeta_{n-t_n}(\sqrt{n}A)\geq p\right),
\end{align*}
which, by \eqref{nutp}, is bounded by
\[
\P(\bar{Z}_{t_n}(B_n^c)>\delta/2)+\sum_{\zeta\in \mathcal{M}}\P(Z_{t_n}=\zeta)\P\left(\bar{Z}^\zeta_{n-t_n}(\sqrt{n}A)\geq \frac{1}{|\zeta|}\sum_{x\in\zeta}\nu_{n-t_n}(\sqrt{n}A-x)+\delta/4\right).
\]
Note that  $|Z_{t_n}|\geq b^{t_n}$. In view of Lemma \ref{LP+},
\[
\sum_{\zeta\in \mathcal{M}}\P(Z_{t_n}=\zeta)\P\left(\bar{Z}^\zeta_{n-t_n}(\sqrt{n}A)\geq \frac{1}{|\zeta|}\sum_{x\in\zeta}\nu_{n-t_n}(\sqrt{n}A-x)+\delta/4\right)\leq C_1 e^{-c_9 b^{t_n}}.
\]
Since $\P(\bar{Z}_{t_n}(B_n^c)>\delta/2)\leq \P(Z_{t_n}(B_n^c)\geq \delta b^{t_n}/2)$, then
\begin{equation}\label{Main03up+}
\P(\bar{Z}_n(\sqrt{n}A)\geq p)\leq \P(Z_{t_n}(B_n^c)\geq \delta b^{t_n}/2)+C_1 e^{-c_9 b^{t_n}}.
\end{equation}
It remains to bound $\P(Z_{t_n}(B_n^c)\geq \delta b^{t_n}/2)$, which will be investigated separately in two cases: $B=\infty$ and $B<\infty$.

\subparagraph{First case: $B=\infty$.}

Note that by Markov inequality and symmetry of $X$,
\begin{align*}
\P(Z_{t_n}(B_n^c)\geq \delta b^{t_n}/2) \leq & \P(Z_{t_n}(B_n^c)\geq 1)\\
\leq & m^{t_n}\nu_{t_n}(B_n^c)\leq 2 m^{t_n}\nu_{t_n}\left(\Big[(I_A(p)-\eta)\sqrt{n},\infty\Big)\right).
\end{align*}
Then (1) of Lemma \ref{A2} implies that
\[
\limsup_{n\rightarrow\infty}\frac{(\log n)^{\alpha-1}}{n^{\alpha/2}}\log \P(Z_{t_n}(B_n^c)\geq \delta b^{t_n}/2) \leq -\lambda \left(\frac{2\log b}{\alpha}\right)^{\alpha-1}(I_A(p)-\eta)^\alpha.
\]
As $b^{t_n}\geq \frac{n^{\alpha/2}}{b}\gg \frac{n^{\alpha/2}}{t_n^{\alpha-1}}$, in view of \eqref{Main03up+}, we get
\[
\limsup_{n\rightarrow\infty}\frac{(\log n)^{\alpha-1}}{n^{\alpha/2}}\log\P(\bar{Z}_n(\sqrt{n}A)\geq p)\leq -\lambda \left(\frac{2\log b}{\alpha}\right)^{\alpha-1}(I_A(p)-\eta)^\alpha,
\]
which, with the help of \eqref{limsuperexlow},
 proves \eqref{limsuperex} in the case of $B=\infty$ by letting $\eta\downarrow 0$.
\subparagraph{Second case: $b<B<\infty$.}
The proof will be divided into three subparts.\\
{\it Subpart 1:} Recall that $t_n=\left\lfloor\frac{\alpha}{2\log b}\log n\right\rfloor$. Let $s_n=\l\lfloor\frac{t_n\log b -2\alpha\log\log n}{\log B} \r\rfloor$. For $n$ large enough, we have $\delta b^{t_n}/4\geq B^{s_n}$.
Observe that
\[
\P(Z_{t_n}(B_n^c)\geq \delta b^{t_n}/2) \leq 2\P\left(Z_{t_n}((I_A(p)-\eta)\sqrt{n},\infty)\geq B^{s_n}\right).
\]
Recall that up to the $t_n$-th generation, the genealogical tree ${\cal T}_{t_n}$ is Galton-Watson. Set $I(n)=(I_A(p)-\eta)\sqrt{n}$. Then
\begin{equation*}
\P(Z_{t_n}((I_A(p)-\eta)\sqrt{n},\infty)\geq B^{s_n})=\sum_{{\bf t}}\P({\cal T}_{t_n}={\bf t})\P^{\bf t}\l( \sum_{|u|=t_n,\, u\in{\bf t}}1_{\{S_u>I(n)\}}\geq B^{s_n}\r).
\end{equation*}
Observe that
\beqlb\label{comboundset}
\l\{\sum_{|u|=t_n,\, u\in{\bf t}}1_{\{S_u>I(n)\}}\geq B^{s_n}\r\}\subset \l\{\bigcup_{{\cal J}\subset {\bf t}_{t_n},\, |{\cal J}|=B^{s_n} } \bigcap_{u\in \J} \{S_u>I(n)\}\r\},
\eeqlb
where ${\bf t}_{t_n}=\{u\in {\bf t}: |u|=t_n\}$. This yields that
\begin{equation}\label{Main03upkey}
\P(Z_{t_n}((I_A(p)-\eta)\sqrt{n},\infty)\geq B^{s_n})\leq \sum_{{\bf t}}\P({\cal T}={\bf t})\sum_{{\cal J}\subset {\bf t}_{t_n},\, |{\cal J}|=B^{s_n}}\P^{\bf t}\l(\bigcap_{u\in \J} \{S_u>I(n)\} \r)
\end{equation}
We claim that for any ${\bf t}$ and  ${\cal J}\subset {\bf t}_{t_n}$ with $|{\cal J}|=B^{s_n}$,
\begin{equation}\label{combound}
\P^{\bf t}\l( \bigcap_{u\in \J} \{S_u>I(n)\}\r)\leq   c_{10} t_n B^{s_n}e^{-\lambda n^\alpha}+ (c_{10}n)^{t_nB^{s_n}} \exp\l\{- \frac{\lambda\left((I_A(p)-\eta)\sqrt{n}-t_n\right)^{\alpha}}
{\l(t_n-s_n+\frac{1}{B^{1/(\alpha-1)}-1}\r)^{\alpha-1}}\r\}.
\end{equation}
\eqref{combound} will be proved in {\it Subpart 2}. Notice that
\[
\#\l\{{\cal J}\subset {\bf t}_{t_n},\, |{\cal J}|=B^{s_n} \r\}={|{\bf t}_{t_n}|\choose  B^{s_n}}\leq {B^{t_n}\choose  B^{s_n}},
\]
which, together with \eqref{combound} and \eqref{Main03upkey}, gives
\begin{align*}
\P(Z_{t_n}(B_n^c)\geq \delta b^{t_n}/2)\leq {B^{t_n}\choose  B^{s_n}}\times\left(
c_{10}t_n B^{s_n}e^{-\lambda n^\alpha}+
 (c_{10}n)^{t_nB^{s_n}} \exp\l\{- \frac{\lambda\left((I_A(p)-\eta)\sqrt{n}-t_n\right)^{\alpha}}
 {\l(t_n-s_n+\frac{1}{B^{1/(\alpha-1)}-1}\r)^{\alpha-1}}\r\}\right).
\end{align*}
Note that $B^{s_n}\leq \frac{n^{\alpha/2}}{(\log n)^{2\alpha}}$ and
\[
 {B^{t_n}\choose  B^{s_n}}=\frac{(B^{t_n})!}{(B^{s_n})!(B^{t_n}-B^{s_n})!}\leq \frac{B^{t_n B^{s_n}}}{(B^{s_n})!}\leq B^{t_n B^{s_n}}.
\]
 Then
\begin{align*}
t_n B^{s_n}\log n=&O(1)\frac{n^{\alpha/2}}{(\log n)^{2\alpha-2}}\ll \frac{n^{\alpha/2}}{(\log n)^{\alpha-1}},\\
 \frac{\left((I_A(p)-\eta)\sqrt{n}-t_n\right)^{\alpha}}{\l(t_n-s_n+\frac{1}{B^{1/(\alpha-1)}-1}\r)^{\alpha-1}}=& \frac{n^{\alpha/2}}{(\log n)^{\alpha-1}}\left(\Upsilon^{\alpha-1}(I_A(p)-\eta)^\alpha+o_n(1)\right),
\end{align*}
where $\Upsilon=\frac{2\log b\log B}{\alpha(\log B-\log b)}$. So,
\begin{equation*}
\P(Z_{t_n}(B_n^c)\geq \delta b^{t_n}/2) \leq\exp\left\{-\frac{\lambda n^{\alpha/2}}{(\log n)^{\alpha-1}}\left(\Upsilon^{\alpha-1}(I_A(p)-\eta)^\alpha+o_n(1)\right)\right\}.
\end{equation*}
In view of \eqref{Main03up+}, we conclude that
\[
\limsup_{n\rightarrow\infty}\frac{(\log n)^{\alpha-1}}{n^{\alpha/2}}\log\P(\bar{Z}_n(\sqrt{n}A)\geq p)\leq -\lambda \Upsilon^{\alpha-1}(I_A(p)-\eta)^a,
\]
which, by letting $\eta\rightarrow0$, gives what we need.
.

{\it Subpart 2:} This subpart is devoted to demonstrating \eqref{combound}. For any $\J$, define ${\bf t}_{\J}=\{v\in{\bf t}: \rho\prec v\preceq u, u\in \J\}$. One sees that
\begin{align}\label{maxPt}
 &\text{L.H.S. of }\eqref{combound}\leq\P^{\mathbf{t}}\l(\sum_{\rho\prec v\preceq u}|X_v|\geq I(n), \forall u\in \J\r)\nonumber\\
\leq & \P^{\mathbf{t}}\l(\sup_{v\in {\bf t}_\J}|X_v|\geq n\r)+\P^{\mathbf{t}}\l(\sup_{v\in {\bf t}_\J}|X_v|\leq n; \sum_{\rho\prec v\preceq u}|X_v|\geq I(n), \forall u\in \J\r).
\end{align}
 It follows from the tail distribution of $X$ that there exists $c_{11}\geq 1$ such that
\begin{equation*}
\P(|X|\geq x)\leq c_{11} e^{-\lambda x^\alpha}, \forall x\geq0.
\end{equation*}
As a consequence, we have
\begin{equation}\label{combound02}
 \P^{\mathbf{t}}\l(\sup_{v\in {\bf t}_\J}|X_v|\geq n\r)\leq |{\bf t}_\J|\P(|X|\geq n)\leq c_{11}t_n B^{s_n}e^{-\lambda n^\alpha}.
\end{equation}
Meanwhile,
\begin{align}\label{combound01}
&\P^{\mathbf{t}}\l(\sup_{v\in {\bf t}_\J}|X_v|\leq n; \sum_{\rho\prec v\preceq u}|X_v|\geq I(n), \forall u\in \J\r)\nonumber\\
\leq & \sum_{x_v\in {\mbb N}\cap[0,n), v\in {\bf t}_\J} \P^{\mathbf{t}} \l( \bigcap_{v\in {\bf t}_\J}\l\{|X_v|\in [x_v, x_v+1]\r\}\r)1_{\l\{\min\limits_{u\in\J}\sum\limits_{\rho\prec v\preceq u}(x_v+1)\geq I(n)\r\}} \nonumber\\
\leq & \sum_{x_v\in {\mbb N}\cap[0,n), v\in {\bf t}_\J} {c_{11}}^{t_n B^{s_n}}e^{-\lambda\sum_{v\in\mathbf{t}_\J} x_v^\alpha}1_{\l\{\min\limits_{u\in\J}\sum\limits_{\rho\prec v\preceq u}x_v\geq I_1(n)\r\}},
\end{align}
where $I_1(n)=I(n)-t_n$.
We need to bound R.H.S. of \eqref{combound01}. To end this, we {\bf CLAIM} that from $\{x_v, v\in\mathbf{t}_\J\}$, one can construct a rooted deterministic tree $\mathbf{t}_\J^*$ with labels $x_\rho=0$ and $\{x_v^*, v\in\mathbf{t}_\J^*\setminus\{\rho\}\}\subset\{x_v, v\in\mathbf{t}_\J\}$ such that
\begin{equation}\label{tstar}
\sum_{v\in\mathbf{t}_\J} x_v^\alpha \geq \sum_{v\in\mathbf{t}_\J^*} (x_v^*)^\alpha,\quad \min_{|u|=t_n, u\in \mathbf{t}_\J^*}\sum_{\rho\prec v\preceq u}x_v^*\geq \min_{ u\in \J}\sum_{\rho\prec v\preceq u}x_v\geq I_1(n),
\end{equation}
and that $\mathbf{t}_\J^*$ contains a single branch up to the generation $t_n-s_n$, then it has the $B$-regular structure up to the generation $t_n$. The detailed construction will be postponed to {\it Subpart 3}.\\

With the help of \eqref{tstar}, we get that
\begin{align}\label{maxPtstar}
&\text{R.H.S. of }\eqref{combound01}\nonumber\\
\leq &  c_{11}^{t_nB^{s_n}}n^{|{\bf t}_{\J}|}\sup_{x_v^*\in {\mbb N}, x_v^*<n, v\in {\bf t}^*_\J} e^{-\lambda\sum_{v\in\mathbf{t}_\J^*} (x_v^*)^\alpha}1_{\l\{\min\limits_{|u|=t_n}\sum\limits_{\rho\prec v\preceq u}x_v^*\geq I_1(n)\r\}}.
\end{align}
  Note that $\min_{|u|=t_n}\sum_{\rho\prec v\preceq u}x_v^*\geq I_1(n)$ leads to $\sum_{|u|=t_n}\sum_{\rho\prec v\preceq u}x_v^*\geq I_1(n)B^{s_n}$, which means
\[
\sum_{i=1}^{k_0} \sum_{|u|=i}x_u^*+\sum_{i=1+k_0}^{t_n}\frac{\sum_{|u|=i}x_u^*}{B^{i-k_0}}\geq I_1(n),
\]
with $k_0=t_n-s_n$.
Note that
\beqnn
\#\{u\in {\bf t}_\J^*: |u|=i\}=\begin{cases}1, & i\leq k_0;\cr
B^{i-k_0}, &k_0<i\leq t_n.
\end{cases}
\eeqnn
Write $\bar{x}_i^*=\sum_{|u|=i}x_u^*$ for $i\leq k_0$ and $\bar{x}^*_i=\frac{\sum_{|u|=i}x_u^*}{B^{i-k_0}}$ for $k_0<i\leq t_n$. So we have
\[
1_{\{\min_{|u|=t_n}\sum_{\rho\prec v\preceq u}x_v^*\geq I_1(n)\}}\leq 1_{\{\sum_{i=1}^{t_n}\bar{x}_i^*\geq I_1(n)\}}.
\]
Consequently,
\begin{equation}\label{cond}
\sup_{x_v^*\in {\mbb N}\cap[0,n), v\in {\bf t}^*_\J} e^{-\lambda\sum_{v\in\mathbf{t}_\J^*} (x_v^*)^\alpha}1_{\l\{\min\limits_{|u|=t_n}\sum\limits_{\rho\prec v\preceq u}x_v^*\geq I_1(n)\r\}}\leq \sup_{x_v^*\in {\mbb N}\cap[0,n), v\in {\bf t}^*_\J} e^{-\lambda\sum_{v\in\mathbf{t}_\J^*} (x_v^*)^\alpha}1_{\l\{\sum\limits_{i=1}^{t_n}\bar{x}_i^*\geq I_1(n)\r\}}.
\end{equation}
Moreover, as $\alpha>1$, by convexity of $x\mapsto x^\alpha$ on $\mR_+$, we obtain that for $k_0<i\leq t_n$,
\[
\sum_{|u|=i}\Big(x_u^*\Big)^\alpha\geq B^{i-k_0} \Big(\frac{\sum_{|u|=i}x_u^*}{B^{i-k_0}}\Big)^\alpha= B^{i-k_0}(\bar{x}_i^*)^\alpha,
\]
and $\sum_{|u|=i}(x_u^*)^\alpha=(\bar{x}_i^*)^\alpha$ for $1\leq i\leq k_0$. Again using convexity implies that, for any $\mu_i>0$,
\begin{align*}
\sum_{v\in\mathbf{t}^*} (x_v^*)^\alpha\geq &\sum_{i=1}^{k_0}(\bar{x}_i^*)^\alpha+\sum_{i=k_0+1}^{t_n} B^{i-k_0}(\bar{x}_i^*)^\alpha\\
=& \sum_{i=1}^{k_0}(\bar{x}_i^*)^\alpha+\sum_{i=1+k_0}^{t_n}B^{i-k_0}\mu_i^{-\alpha}(\mu_i\bar{x}_i^*)^\alpha\\
\geq & \l(k_0+\sum_{i=1+k_0}^{t_n}B^{i-k_0}\mu_i^{-\alpha}\r)
\left(\frac{\sum_{i=1}^{k_0}\bar{x}_i^*
+\sum_{i=1+k_0}^{t_n}B^{i-k_0}\mu_i^{1-\alpha}\bar{x}_i^*}
{k_0+\sum_{i=1+k_0}^{t_n}B^{i-k_0}\mu_i^{-\alpha}}\right)^{\alpha}.
\end{align*}
By taking $\mu_i>0$ such that $B^{i-k_0}\mu_i^{1-\alpha}=1$, one sees that given $\sum_{i=1}^{t_n}\bar{x}_i^*\geq I_1(n)$, we have
\beqlb\label{cond01}
\sum_{v\in\mathbf{t}^*} (x_v^*)^\alpha\ar\geq\ar  \Big(k_0+\sum_{i=1+k_0}^{t_n}B^{-\frac{i-k_0}{\alpha-1}}\Big)^{1-\alpha}
\left(\sum_{i=1}^{t_n}\bar{x}_i^*\right)^\alpha\cr
\ar\geq\ar \frac{\left((I_A(p)-\eta)\sqrt{n}-t_n\right)^{\alpha}}{\l(t_n-s_n+\frac{1}{B^{1/(\alpha-1)}-1}\r)^{\alpha-1}}.
\eeqlb
With \eqref{maxPt}, \eqref{combound01} and \eqref{combound02} in hand,  plugging \eqref{cond01} and \eqref{cond} into \eqref{maxPtstar} yields that
\begin{align}
&\text{L.H.S. of }\eqref{combound}\nonumber\\
&\quad\leq  c_{11}t_n B^{s_n}e^{-\lambda n^\alpha}+ (c_{11}n)^{t_nB^{s_n}} \exp\l\{- \frac{\lambda\left((I_A(p)-\eta)\sqrt{n}-t_n\right)^{\alpha}}{\l(t_n-s_n+\frac{1}{B^{1/(\alpha-1)}-1}\r)^{\alpha-1}}\r\}.
\end{align}
We have completed the proof of \eqref{combound}.

{\it Subpart 3:} We now explain how to construct $\{x_v^*, v\in\mathbf{t}^*\}$.
Recall that $k_0=t_n-s_n$ and  $|Z_k^\mathbf{t}|$ is the number of particles at the $k$-th generation in $\mathbf{t}$. Then
\[
|Z_{i+k_0}^{\mathbf{t}}|\geq B^i, \forall 1\leq i\leq t_n-k_0=s_n.
\]
This shows that there are sufficiently many particles in $\mathbf{t}$. As always, we say that $s_u:=\sum_{\rho<v\leq u}x_v$ is the position of $u\in\mathbf{t}$ and $x_u$ its displacement. For each $u\in \mathbf{t}$ and that $\kappa(u)$ is the number of its children, if it is selected to be in $\mathbf{t}^*$, $x_u^*=x_u$ (but its ancestor line might be changed, so might be its position).

At the $(t_n-1)$-th generation, there are at least $B^{s_n-1}$ particles in $\mathbf{t}$. Let us rearrange them according to their positions: $s_{u_{(1)}}\geq s_{u_{(2)}}\geq\cdots s_{u_{(B^{s_n-1})}}\geq\cdots$. We only take $u_{(j)}, j=1,\cdots, B^{s_n-1}$ (together with their descedants) to be in $\mathbf{t}^*$. Those particles are said to be selected. For $u_{(j)}$, if $\kappa(u_{(j)})=B$, we jump to consider $u_{(j+1)}$. Otherwise, we prune $B-\kappa(u_{(j)})$ unselected particles at the $t_n$-th generation and graft them to $u_{(j)}$. The fact that  $|Z_{t_n}^{\mathbf{t}}|=B^{s_n}$ ensures that $u_{(j)}$ finally has $B$ children for any $\forall 1\leq j\leq B^{s_n-1}$.

Suppose that we have already got the $B^{i+1}$ particles with their descendants for the generation $k_0+i+1$ of $\mathbf{t}^*$. We are interested in their parents at the $(k_0+i)$-th generation in $\mathbf{t}$, the number of which is at least $B^{i}$. We rearrange them according to their positions: $s_{w_{(1)}}\geq s_{w_{(2)}}\geq\cdots s_{w_{(B^{i})}}\geq\cdots$.
We select $w_{(j)},\ j=1,\cdots, B^i$ with their descendants in $\mathbf{t}^*$,  For $w_{(j)}$, if $\kappa(w_{(j)})=B$, we jump to consider $u_{(j+1)}$. Otherwise, we prune $B-\kappa(w_{(j)})$ unselected particles at the $(k_0+i+1)$-th generation together with their descendants and graft these subtrees to $w_{(j)}$.

We continue this backward construction until the $k_0$-th generation where there is only one particle in $\mathbf{t}^*$. We then take directly all its ancestors in $\mathbf{t}$ to establish the single branch of $\mathbf{t}^*$.

In the following figure, we give an example of this construction. The horizontal axis represents generations and the vertical axis represents positions of particles. In FigA, from the tree ${\bf t}$, we choose $B^{s_n}$ particles in $\J$ and colour them in red. In FigB, we subtract all the ancestors of the red particles and remove the others. In FigC, we do the pruning and grafting for the last two generations as explained above. In FigD, we get the final labelled tree ${\bf t^*}$ in blue while the red particles are those chosen in $\J$. The blue dashed lines link the particles and their added descendants at each step.

\includegraphics[width=16cm]{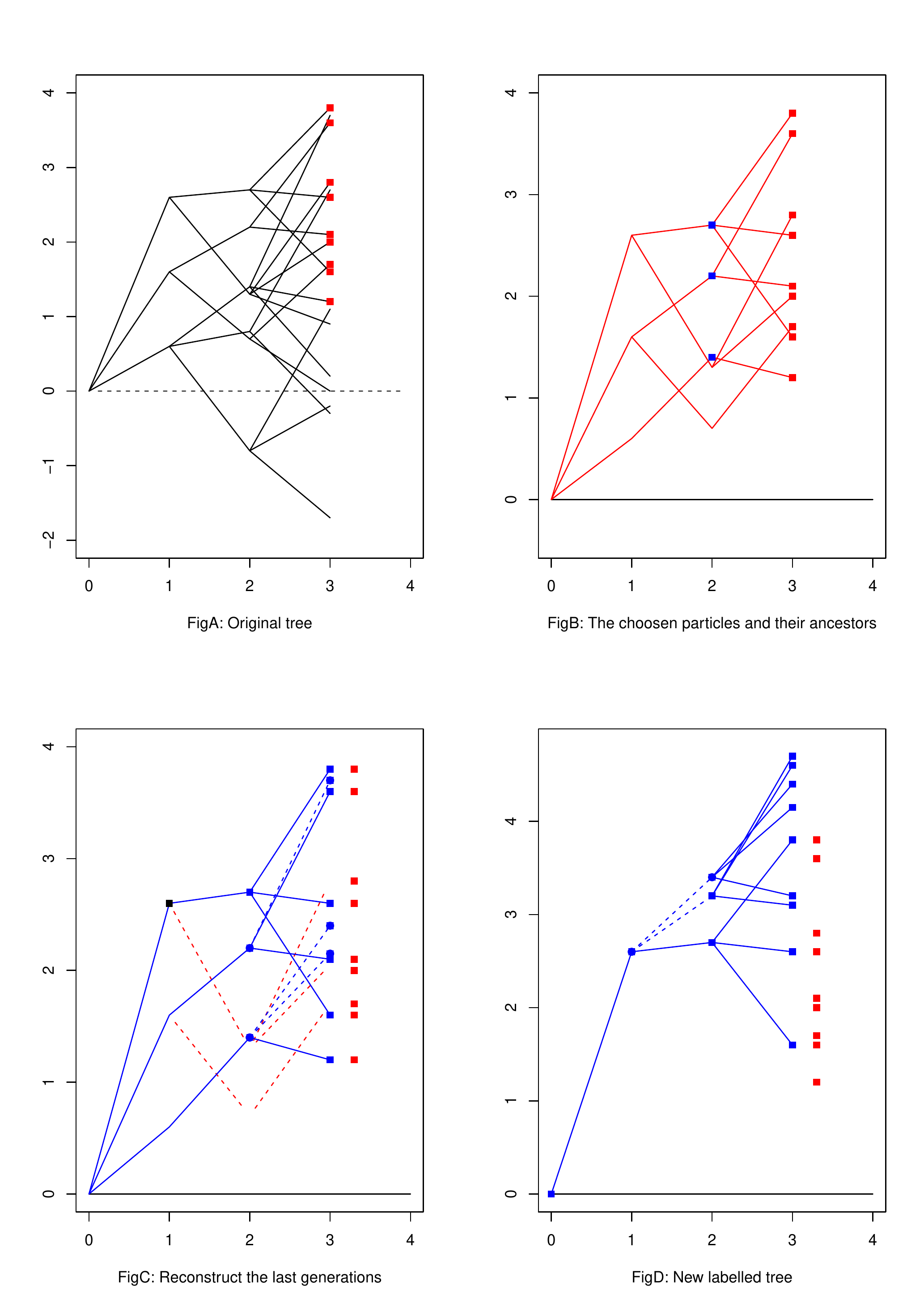}\label{Fig1}

Observe that only a part of the particles in $\mathbf{t}$ has been selected and that the positions at the final generation of ${\bf t^*}$ are all getting higher. \eqref{tstar} is satisfied.

\subsubsection{LDP when the tree is regular}

Unfortunately, the arguments above do not all work when the embedding tree $\cal T$ is regular. We could only get the upper and lower bound for the large deviation behaviours:
\begin{multline}\label{Main0303}
-C_\alpha\leq\liminf_{n\rightarrow\infty}\frac{1}{n^{\alpha/2}}\log\P\left(\bar{Z}_n(\sqrt{n}A)-\nu(A)\geq \Delta\right) \\
\leq \limsup_{n\rightarrow\infty}\frac{1}{n^{\alpha/2}}\log\P\left(\bar{Z}_n(\sqrt{n}A)-\nu(A)\geq \Delta\right) \leq -c_\alpha.
\end{multline}

\paragraph{Lower bound} No matter whether $\alpha<1$ or $\alpha\geq1$, we only consider the first generation and suppose that
\[
S_u\in [(x-\eta)\sqrt{n},(x+\eta)\sqrt{n}], \quad\forall |u|=1,
\]
where $|x|=I_A(p)+\ez$ with $\inf_{y\in[x-\eta,x+\eta]}\nu(A-y)\geq p+\delta$ for some $\delta,\eta,\ez>0$. Given $\mathbf{E}:=\{S_u\in [(x-\eta)\sqrt{n},(x+\eta)\sqrt{n}], \forall |u|=1\}$,
\[
\liminf_{n\rightarrow\infty} \bar{Z}_n(\sqrt{n}A)\geq p+\delta.
\]
Then
\begin{align*}
\P\left(\bar{Z}_n(\sqrt{n}A)-\nu(A)\geq \Delta\right)\geq & \P(S_u\in [(x-\eta)\sqrt{n},(x+\eta)\sqrt{n}], \forall |u|=1; \bar{Z}_n(\sqrt{n}A)\geq p)\\
\geq &\P(\bar{Z}_n(\sqrt{n}A)\geq p\vert\mathbf{E})\P(\mathbf{E}),
\end{align*}
where
\[
\liminf_{n\rightarrow\infty}\P(\bar{Z}_n(\sqrt{n}A\geq p\vert\mathbf{E}) \geq \P(\liminf_{n\rightarrow\infty} \bar{Z}_n(\sqrt{n}A)\geq p\vert E)=1.
\]
On the other hand,
\[
\P(\mathbf{E})=\P(X\in  [(x-\eta)\sqrt{n},(x+\eta)\sqrt{n}])^b=(1+o_n(1))e^{-\lambda b (x-\eta)^\alpha n^{\alpha/2}}.
\]
We thus deduce that
\[
\liminf_{n\rightarrow\infty}\frac{1}{n^{\alpha/2}}\log\P\left(\bar{Z}_n(\sqrt{n}A)-\nu(A)\geq \Delta\right) \geq - \lambda b (x-\eta)^\alpha.
\]
We obtain the lower bound in \eqref{Main0303} with $C_\alpha=\lambda b (I_A(p))^\alpha$.

\paragraph{Upper bound}

When $\alpha\leq1$, the arguments in Section 4.1.1 still work for regular tree. So the upper bound is obtained with $c_\alpha=\lambda(I_A(p))^\alpha$ in the case of $\alpha\leq 1$.

When $\alpha>1$, \eqref{Main03up+} still holds. So, for $B_n=[-(I_A(p)-\eta)\sqrt{n},(I_A(p)-\eta)\sqrt{n}]$ and $t_n=\l\lfloor\frac{\alpha\log b}{2}\log n\r\rfloor$,
\begin{align*}
\P(\bar{Z}_n(\sqrt{n}A)-\nu(A)\geq \Delta)\leq \P(Z_{t_n}(B_n^c)\geq \delta b^{t_n}/2)+C_1 e^{-c_9 b^{t_n}}.
\end{align*}
Observe that
\[
\frac{Z_{t_n}(B_n^c)}{b^{t_n}}=\sum_{|u|=t_n}\frac{1_{|S_u|\geq (I_A(p)-\eta)\sqrt{n}}}{b^{t_n}}\leq \frac{1}{(I_A(p)-\eta)\sqrt{n}}\sum_{|u|=t_n}\frac{|S_u|}{b^{t_n}}.
\]
Moreover, $|S_u|\leq \sum_{\rho\prec v\preceq u}|X_u|$ implies that
\[
\frac{Z_{t_n}(B_n^c)}{b^{t_n}}\leq \frac{1}{(I_A(p)-\eta)\sqrt{n}}\sum_{|u|=t_n}\frac{|S_u|}{b^{t_n}}\leq \frac{1}{(I_A(p)-\eta)\sqrt{n}}\sum_{k=1}^{t_n}\sum_{|u|=k}\frac{|X_u|}{b^k}.
\]
It follows that
\begin{align*}
\P(\bar{Z}_n(\sqrt{n}A)-\nu(A)\geq \Delta)\leq & \P\left(\sum_{k=1}^{t_n}\sum_{|u|=k}\frac{|X_u|}{b^k}\geq \delta(I_A(p)-\eta)\sqrt{n}\right)+C_1 e^{-c_9 n^{\alpha/2}}\\
=& \P\left(\left(\sum_{k=1}^{t_n}\sum_{|u|=k}\frac{|X_u|}{b^k}\right)^\alpha \geq [\delta(I_A(p)-\eta)\sqrt{n}]^\alpha\right)+C_1 e^{-c_9 n^{\alpha/2}}.
\end{align*}
By convexity of $x\mapsto x^\alpha$, for any $\mu_k>0$, one sees that
\begin{align*}
\left(\sum_{k=1}^{t_n}\sum_{|u|=k}\frac{|X_u|}{b^k}\right)^\alpha =&\left(\sum_{k=1}^{t_n}\sum_{|u|=k}\frac{\mu_k}{b^k}(\mu_k^{-1}|X_u|)\right)^\alpha\\
\leq & (\sum_{k=1}^{t_n}\mu_k)^\alpha\frac{\sum_{k=1}^{t_n}\sum_{|u|=k}\mu_k b^{-k}(\mu_k^{-1}|X_u|)^\alpha}{\sum_{k=1}^{t_n}\mu_k}\\
\leq& \mu^{\alpha-1}\sum_{k=1}^{t_n}\sum_{|u|=k}b^{-k}\mu_k^{1-\alpha}|X_u|^\alpha,
\end{align*}
where $\mu=\sum_{k=1}^{\infty}\mu_k$. We are going to take a decreasing sequence $\mu_k=k^{-2}$ so that $\mu<\infty$. Therefore, for any $\theta>0$,
\begin{align}\label{Main0303up}
&\P(\bar{Z}_n(\sqrt{n}A)-\nu(A)\geq \Delta)\nonumber\\
\leq & \P\left( \mu^{\alpha-1}\sum_{k=1}^{t_n}\sum_{|u|=k}b^{-k}\mu_k^{1-\alpha}|X_u|^\alpha\geq [\delta(I_A(p)-\eta)\sqrt{n}]^\alpha\right)+C_1 e^{-c_9 n^{\alpha/2}}\nonumber\\
\leq & e^{-\frac{\theta}{\mu^{\alpha-1}}\delta^\alpha (I_A(p)-\eta)^\alpha n^{\alpha/2}}\E\left[\exp\{\theta\sum_{k=1}^{t_n}\sum_{|u|=k}b^{-k}\mu_k^{1-\alpha}|X_u|^\alpha\}\right]+C_1 e^{-c_9 n^{\alpha/2}}.
\end{align}
We then show that for $\theta>0$ sufficiently small such that $\sup_{k\geq 1}\theta b^{-k}k^{2(\alpha-1)} \leq \lambda/2$, there exists $c_\lambda>0$ such that
\[
\E\left[\exp\{\theta\sum_{k=1}^{t_n}\sum_{|u|=k}b^{-k}\mu_k^{1-\alpha}|X_u|^\alpha\}\right]\leq e^{c_{\lambda}\theta t_n^{2\alpha-1}}.
\]
In fact, by independence,
\begin{align*}
\E\left[\exp\{\theta\sum_{k=1}^{t_n}\sum_{|u|=k}b^{-k}\mu_k^{1-\alpha}|X_u|^\alpha\}\right]=&\prod_{k=1}^{t_n}\prod_{|u|=k}\E\left[\exp\{\theta b^{-k}k^{2(\alpha-1)}|X_u|^\alpha\}\right]\\
=& \prod_{k=1}^{t_n}\prod_{|u|=k}\left(1+\int_0^\infty \theta b^{-k}k^{2(\alpha-1)}e^{\theta b^{-k}k^{2(\alpha-1)}x}\P(|X_u|^\alpha\geq x)dx\right).
\end{align*}
The tail distribution of $X$ shows that $\P(|X|\geq x)\leq c_{11} e^{-\lambda x^\alpha}$ for any $x\geq0$. It follows that
\begin{align*}
\E\left[\exp\{\theta\sum_{k=1}^{t_n}\sum_{|u|=k}b^{-k}k^{2(\alpha-1)}|X_u|^\alpha\}\right]\leq &\prod_{k=1}^{t_n}\prod_{|u|=k}\left(1+c_{11}\int_0^\infty \theta b^{-k}k^{2(\alpha-1)}e^{\theta b^{-k}k^{2(\alpha-1)}x-\lambda x}dx\right)\\
=&\prod_{k=1}^{t_n}\prod_{|u|=k}\left(1+c_{11}\frac{\theta b^{-k}k^{2(\alpha-1)}}{\lambda-\theta b^{-k}k^{2(\alpha-1)}}\right)\\
\leq & \exp\left\{\sum_{k=1}^{t_n}\sum_{|u|=k} c_{11} \frac{\theta b^{-k}k^{2(\alpha-1)}}{\lambda-\theta b^{-k}k^{2(\alpha-1)}}\right\}<e^{c_{\lambda}\theta t_n^{2\alpha-1}}.
\end{align*}
Plugging it into \eqref{Main0303up} yields that
\[
\P(\bar{Z}_n(\sqrt{n}A)-\nu(A)\geq \Delta)\leq e^{-\frac{\theta}{\mu^{\alpha-1}}\delta^\alpha (I_A(p)-\eta)^\alpha n^{\alpha/2}+c_\lambda \theta t_n^{2\alpha-1}}+C_1 e^{-c_9 n^{\alpha/2}}.
\]
Recall that $t_n=O(\log n)$. We have
\[
\limsup_{n\rightarrow\infty}\frac{1}{n^{\alpha/2}}\log \P(\bar{Z}_n(\sqrt{n}A)-\nu(A)\geq \Delta)\leq -c_\alpha.
\]

\subsection{Proof of Theorem \ref{Main04}: step size has Gumbel tail distribution}

In this section, we assume that the tail distribution of step size is of Gumbel's type, in other words, $\P(X\geq x)=\Theta(1) e^{-e^{x^\alpha}}$ with $\alpha>0$, for $x\rightarrow\infty$. In what follows, we are devoted to demonstrating that if $I_A$ is finite and continuous at $p=\nu(A)+\Delta$, then
\begin{equation}\label{Main04cvg}
\lim_{n\rightarrow\infty}{n^{-\frac{\alpha}{2(\alpha+1)}}}\log\left[-\log
\P\left(\bar{Z}_n(\sqrt{n}A)-\nu(A)>\Delta\right)\right]
= \left(y_{\alpha}I_A(p)\log b\right)^{\frac{\alpha}{\alpha+1}},
\end{equation}
where $y_\alpha=\frac{(1+\alpha)\log B}{(1+\alpha)\log B-\log b} $.

The ideas of proof are similar to that used in Section \ref{proofsupex}. However, we do not need to assume $B>b$.

\subsubsection{Lower bound of \eqref{Main04cvg}} As stated in Section \ref{proofsupex}, for any sufficiently small $\delta>0$, there exist $x_0\in\mR$, $\ez,\eta>0$ such that
\[
\inf_{|y-x_0|\leq \eta}\nu(A-x)\geq p+\delta,\ I_A(p)< x_0-\eta<x_0 \leq I_A(p)+\ez.
\]
Take $d\geq b$ an integer such that $p_d>0$. (Again, if $B<\infty$,  then we choose $d=B$. If $B=\infty$, we will let $d\rightarrow\infty$ later.) Let
\beqnn
t_n=\left\lfloor tn^{\frac{\alpha}{2(\alpha+1)}}-\frac{2\log n}{\log b}\right\rfloor, s_n=\left\lfloor\frac{\log n+t_n\log b}{\log d}\right\rfloor\wedge t_n, y_\alpha(d)=\frac{(1+\alpha)\log d}{(1+\alpha)\log d-\log b}
\eeqnn
where $t>0$ will be determined later on. With these $d$, $t_n$, $s_n$, recall event $\mathcal{E}_{t_n,d,b}$  from \eqref{EVENTE}.  It follows that
\begin{align}\label{Main04lower}
&\P(\bar{Z}_n(\sqrt{n}A)-\nu(A)\geq \Delta)\geq  \P(\bar{Z}_n(\sqrt{n}A)-\nu(A)\geq \Delta; \mathcal{E}_{t_n,d,b}).
\end{align}
Define
\[
\mathcal{M}_2=\l\{\zeta\in {\cal M}: |\zeta|=d^{s_n}+b^{t_n}-b^{s_n}, \zeta( [(x_0-\eta)\sqrt{n}, (x_0+\eta)\sqrt{n}])\geq d^{s_n}\r\}.
\]
  We  have for any $\zeta\in {\cal M}_2$,
\[
\frac{1}{|\zeta|}\sum_{x\in\zeta}\nu(A-\frac{x}{\sqrt{n}})\geq \frac{d^{s_n}}{b^{t_n}-b^{s_n}+d^{s_n}}\inf_{y\in[x_0-\eta, x_0+\eta]}\nu(A-y)\geq \frac{n}{n+1}(p+\delta),
\]
which, together with (5) of Lemmma \ref{LP}, implies that for all $n$ sufficiently large,
\[
\frac{1}{|\zeta|}\sum_{x\in\zeta}\nu_{n-t_n}(\sqrt{n}A-x)\geq p+\delta/2.
\]
Note that, $Z_{t_n}\in{\cal M}_2$, given $\mathcal{E}_{t_n,d,b}$. Consequently,
\begin{align*}
&\P(\bar{Z}_n(\sqrt{n}A)\geq p; \mathcal{E}_{t_n,d,b})\\&\quad\geq  \sum_{\zeta\in{\cal M}_2}\P(Z_{t_n}=\zeta; \mathcal{E}_{t_n,d,b})\P\left(\bar{Z}^\zeta_{n-t_n}(\sqrt{n}A)\geq p\right)\\
&\quad\geq  \sum_{\zeta\in{\cal M}_2}\P(Z_{t_n}=\zeta; \mathcal{E}_{t_n,d,b})\P\left(\bar{Z}^\zeta_{n-t_n}(\sqrt{n}A)\geq \frac{1}{|\zeta|}\sum_{x\in\zeta}\nu_{n-t_n}(\sqrt{n}A-x)-\delta/2\right)\\
&\quad\geq  \P(\mathcal{E}_{t_n,d,b})(1- C_1e^{-C_2 |\zeta| \delta^2/4}),
\end{align*}
where the last inequality follows from Lemma \ref{LP+}. As $|\zeta|\geq b^{t_n}$, for $n$ large enough,
\begin{equation}\label{Main04lower+}
\P(\bar{Z}_n(\sqrt{n}A)-\nu(A)\geq \Delta)\geq \frac{1}{2}\P(\mathcal{E}_{t_n,d,b}).
\end{equation}
Recall that $u^*=(1,\cdots,1)$ with $|u^*|=t_n-s_n$. It remains to consider $\P(\mathcal{E}_{t_n,d,b})$, which by \eqref{keyevent} is
\begin{equation}\label{Main04lowerbd}
\P(\mathcal{E}_{t_n,d,b})\geq p_b^{b^{t_n}}p_d^{d^{s_n}}\P^{\bf t}(S_u\in[(x_0-\eta)\sqrt{n},(x_0+\eta)\sqrt{n}], \forall u>u^*, |u|=t_n).
\end{equation}
Let $\cx=(x_0y_\alpha(d) \log b)^{\frac{1}{\alpha+1}}$ and $t=\frac{\cx^\alpha}{\log b}$. We suppose that for any ancestor of $u^*$: $u\preceq u^*$,
\[
X_u\in \l[ \cx n^{\frac{1}{2(\alpha+1)}},\,  \cx n^{\frac{1}{2(\alpha+1)}}+\frac{\eta}{2t}n^{\frac{1}{2(\alpha+1)}}\r],
\]
and that for any descendant of $u^*$: $u^*\prec u$ s.t. $|u|\leq t_n$,
\[
X_u\in\left[\left(\cx^\alpha n^{\frac{\alpha}{2(\alpha+1)}}-(|u|+s_n-t_n)\log d\right)^{\frac{1}{\alpha}}, \, \left(\cx^\alpha n^{\frac{\alpha}{2(\alpha+1)}}-(|u|+s_n-t_n)\log d\right)^{\frac{1}{\alpha}}+\frac{\eta}{2t}n^{\frac{1}{2(\alpha+1)}}\right].
\]
This ensures that for any descendant $u$ of $u^*$ at the $t_n$-th generation, its position $S_u$ satisfies
\[
\Bigg\vert S_u-\left[(t_n-s_n)\cx n^{\frac{1}{2(\alpha+1)}}+\sum_{k=1}^{s_n}\l(\cx^\alpha-{k}{n^{-\frac{\alpha}{2(\alpha+1)}}}\log d\r)^{\frac{1}{\alpha}}n^{\frac{1}{2(\alpha+1)}}\right]\Bigg\vert\leq \eta \sqrt{n}/2,
\]
where as $n\rar\infty$, $t_n\approx tn^{\frac{\alpha}{2(\alpha+1)}}$, $s_n\approx \frac{\log b}{\log d}t_n$ and
\begin{align*}
&(t_n-s_n)\cx n^{\frac{1}{2(\alpha+1)}}+\sum_{k=1}^{s_n}\l(\cx^\alpha-{k}{n^{-\frac{\alpha}{2(\alpha+1)}}}\log d\r)^{\frac{1}{\alpha}}n^{\frac{1}{2(\alpha+1)}}\\
&\quad=\sqrt{n}\left[\l(1-\frac{\log b}{\log d}\r)t\cx+\int_0^{\frac{\log b}{\log d}t}(\cx^\alpha-x\log d)^{\frac{1}{\alpha}}dx+o_n(1)\right]\\
&\quad= \sqrt{n}( x_0+o_n(1)).
\end{align*}
So for all $n$ large enough, we have
\[
S_u\in[(x_0-\eta)\sqrt{n},(x_0+\eta)\sqrt{n}], \quad \forall u^*\prec u,\,  |u|=t_n.
\]
As a result,
\begin{align*}
&\P^{\bf t}(S_u\in[(x_0-\eta)\sqrt{n},(x_0+\eta)\sqrt{n}], \forall u^*\prec u, |u|=t_n)\\
\geq & \P\left(X\in [ \cx n^{\frac{1}{2(\alpha+1)}}, \cx n^{\frac{1}{2(\alpha+1)}}+\frac{\eta}{2t}n^{\frac{1}{2(\alpha+1)}}]\right)^{t_n-s_n}\\
&\times \prod_{k=1}^{s_n}\P\left(X\in\left[\left(\cx^\alpha n^{\frac{\alpha}{2(\alpha+1)}}-k\log d\right)^{\frac{1}{\alpha}}, \left(\cx^\alpha n^{\frac{\alpha}{2(\alpha+1)}}-k\log d\right)^{\frac{1}{\alpha}}+\frac{\eta}{2t}n^{\frac{1}{2(\alpha+1)}}\right]\right)^{d^{k}}\\
\geq &\l(\frac{1}{2}\r)^{d^{s_n+1}}c_{12}^{t_n-s_n}\exp\l\{-(t_n-s_n)e^{\cx^\alpha n^{\frac{\alpha}{2(\alpha+1)}}}-\sum_{k=1}^{s_n}d^k e^{\cx^\alpha n^{\frac{\alpha}{2(\alpha+1)}}-k\log d}\r\}.
\end{align*}
Plugging it into \eqref{Main04lowerbd} implies that
\begin{align*}
&\P(\mathcal{E}_{t_n,d,b})\geq p_b^{b^{t_n}}p_d^{nb^{t_n}}\l(\frac{1}{2}\r)^{d b^{t_n}}c_{12}^{t_n}\exp\l\{-t_ne^{\cx^\alpha n^{\frac{\alpha}{2(\alpha+1)}}}\r\}
= \exp\l\{-(1+o_n(1))t_ne^{\cx^\alpha n^{\frac{\alpha}{2(\alpha+1)}}}\r\}.
\end{align*}
Going back to \eqref{Main04lower+}, we hence conclude the lower bound by letting $d\uparrow B$.
\subsubsection{Upper bound}

Let $B_n:=[(-I_A(p)+\eta)\sqrt{n}, (I_A(p)-\eta)\sqrt{n}]$, $t_n=\left\lfloor tn^{\frac{\alpha}{2(\alpha+1)}}\right\rfloor$ with some $t>0$. 
Recall \eqref{nutp}. One sees that for any $\zeta\in\mathcal{M}$ and $n$ large enough,
\[
\l\{\frac{\zeta(B_n^c)}{|\zeta|}\leq \delta/2\r\}\subset\l\{\frac{1}{|\zeta|}\sum_{x\in\zeta}\nu_{n-t_n}(\sqrt{n}A-x)\leq p-\delta/4\r\}.
\]
Again, similar to \eqref{Main03up+}, we thus have
\begin{equation}\label{Main04up}
\P(\bar{Z}_n(\sqrt{n}A)-\nu(A)\geq \Delta)\leq \P(Z_{t_n}(B_n^c)\geq \delta b^{t_n}/2)+C_1 e^{-c_9 b^{t_n}}.
\end{equation}
Here we need to treat $\P(Z_{t_n}(B_n^c)\geq \delta b^{t_n}/2)$ separately in the two following cases.

\paragraph{Upper bound: $B=\infty$} In this case, $y_\alpha=1$. By Markov inequality, we have
\begin{align*}
\P(Z_{t_n}(B_n^c)\geq \delta b^{t_n}/2)\leq & \frac{2}{\delta} b^{-t_n}\E[Z_{t_n}(B_n^c)]\\
= & \frac{4}{\delta} b^{-t_n}m^{t_n}\nu_{t_n}((I_A(p)-\eta)\sqrt{n},\infty).
\end{align*}
 In view of (2) of Lemma \ref{A2}, one has immediately,
\[
\liminf_{n\rightarrow\infty}\frac{1}{n^{\frac{\alpha}{2(\alpha+1)}}}\log\left[-\log \P(Z_{t_n}(B_n^c)\geq \delta b^{t_n}/2)\right]\geq \l(\frac{I_A(p)-\eta}{t}\r)^\alpha.
\]
In \eqref{Main04up}, $b^{t_n}=\exp\{ t \log b n^{\frac{\alpha}{2(\alpha+1)}}\}$. We choose $t=\frac{(I_A(p)-\eta)^{\frac{\alpha}{\alpha+1}}}{(\log b)^{\frac{1}{\alpha+1}}}$ so that $t\log b=\l(\frac{I_A(p)-\eta}{t}\r)^\alpha$. We thus conclude that
\[
\liminf_{n\rightarrow\infty}\frac{1}{n^{\frac{\alpha}{2(\alpha+1)}}}\log\left[-\log \P(\bar{Z}_n(\sqrt{n}A)-\nu(A)\geq \Delta)\right]\geq \left((I_A(p)-\eta)y_\alpha\log b\right)^{\frac{\alpha}{2(\alpha+1))}}.
\]
\paragraph{Upper bound: $b\leq B<\infty$} Following the same idea applied in \eqref{Main03upkey}, we take
\[
s_n=\l\lfloor \frac{t_n\log b-\log n}{\log B}\r\rfloor, \ k_0=t_n-s_n,
\]
and reconstruct the special subtree $\mathbf{t}^*_\J$. It follows from the Gumbel tail distribution of $X$ that there exists $c_{13}\geq 1$ such that
\[
\P(|X|\geq x)\leq c_{13} e^{-e^{x^\alpha}},\quad\text{and }  \forall x\geq0.
\]
Similarly to \eqref{Main03upkey}, \eqref{maxPt}, \eqref{combound01} and \eqref{combound02}, one sees that
%
%
\begin{multline}\label{Main04key}
\P(Z_{t_n}(B_n^c)\geq \delta b^{t_n}/2) \\
\leq 2 B^{t_n B^{s_n}}\left[ c_{13}t_n B^{s_n}e^{-e^{n^{\alpha}}}+ c_{13}^{t_nB^{s_n}}n^{t_n B^{s_n}}\max_{x_u^*\geq0, u\in\mathbf{t}^*_\J\atop \sum_{k=1}^{t_n}\bar{x}_k^*\geq I_1(n)} \exp\left\{-\sum_{u\in \mathbf{t}^*_\J}e^{(x_u^*)^\alpha}\right\}\right].
\end{multline}
Note that
\[
\sum_{u\in \mathbf{t}^*_\J}e^{(x_u^*)^\alpha}\geq \sum_{k=1}^{t_n-s_n}\sum_{|u|=k}e^{(x_u^*)^\alpha}+\sum_{k=1}^{s_n}\sum_{|u|=k+t_n-s_n}e^{(x_u^*\vee M)^\alpha}-\sum_{k=1}^{s_n}B^k e^{M^\alpha}.
\]
where $M\geq0$ is chosen so that $x\mapsto e^{x^\alpha}$ is convex on $[M,\infty)$. Immediately, $\sum_{|u|=k+t_n-s_n}e^{(x_u^*\vee M)^\alpha}\geq B^k e^{(\frac{\sum_{}x_u^*\vee M}{B^k})^\alpha}$. So,
\[
\sum_{u\in \mathbf{t}^*_\J}e^{(x_u^*)^\alpha}\geq \sum_{k=1}^{t_n-s_n} e^{(\bar{x}_k^*)^\alpha}+\sum_{k=1}^{s_n} B^k e^{(\bar{x}_{k+t_n-s_n}^*)^\alpha}-\sum_{k=1}^{s_n}B^k e^{M^\alpha}.
\]
where $\bar{x}^*_k=\frac{\sum_{|u|=k}x_u^*}{B^{(k-t_n+s_n)_+}}$. Let
\(
\Xi_n:=\max\{(\bar{x}_k^*)^\alpha+(k-t_n+s_n)_+\log B; 1\leq k\leq t_n\}.
\)
Then,
\[
\sum_{u\in \mathbf{t}^*_\J}e^{(x_u^*)^\alpha}\geq e^{\Xi_n}- B^{s_n+1}e^{M^\alpha}.
\]
Plugging it into \eqref{Main04key} implies that
\begin{equation}\label{keyup04}
\P(Z_{t_n}(B_n^c)\geq \delta b^{t_n}/2) \leq e^{-(1+o_n(1))n^{\alpha}}+2(c_{13}Bn)^{t_n B^{s_n}}\max_{x_u^*\geq0, u\in\mathbf{t}^*_\J\atop \sum_{k=1}^{t_n}\bar{x}_i^*\geq I(n)}\exp\{-e^{\Xi_n}+B^{s_n+1}e^{M^\alpha}\}.
\end{equation}
Note that  $\{\sum_{k=1}^{t_n}\bar{x}_k^*\geq I_1(n)\}$ implies
\[
I_1(n)\leq \sum_{k=1}^{t_n}\bar{x}_k^*\leq (t_n-s_n)\Xi_n^{1/\alpha}+\sum_{k=1}^{s_n}\Big(\Xi_n- k\log B\Big)^{1/\alpha},
\]
and $\Xi_n\geq s_n\log B=( t\log b+o_n(1)) n^{\frac{\alpha}{2(\alpha+1)}}$. Let $K_n:= \Xi_n n^{-\frac{\alpha}{2(\alpha+1)}}$. Then,
\begin{align*}
(I_A(p)-\eta)+o_n(1) \leq & \l(1-\frac{\log b}{\log B}+o_n(1)\r)t K_n^{1/\alpha}+\int_0^{t\log b/\log B}\Big(K_n- x\log B\Big)_+^{1/\alpha}dx\\
\leq& \l(1-\frac{\log b}{\log B}+o_n(1)\r)t K_n^{1/\alpha}+\frac{\alpha K_n^{1+1/\alpha}}{(1+\alpha)\log B}.
\end{align*}
In view of the right hand side of \eqref{keyup04} where $t_n B^{s_n}\leq \frac{t_n}{n}b^{t_n}$, we take $t$ such that $t_n\log b\leq \Xi_n$. In other words, $t\leq K_n/\log b$. This choice entails that
\[
K_n^{1+1/\alpha}\l(\frac{\log B-\log b}{\log B\log b}+o_n(1)+\frac{\alpha}{(1+\alpha)\log B}\r)\geq I_A(p)-\eta+o_n(1).
\]
Thus, setting $\cx= [(I_A(p)-\eta)y_\alpha\log b]^{\frac{\alpha}{\alpha+1}}$ and $t=\frac{\cx}{\log b}$, we have
\[
K_n\geq \cx(1+o_n(1)),\ b^{t_n}\leq e^{\cx n^{\frac{\alpha}{2(\alpha+1)}}}\textrm{ and } t_nB^{s_n}\log n\ll e^{\cx n^{\frac{\alpha}{2(\alpha+1)}}}.
\]
As $\Xi_n=K_n n^{\frac{\alpha}{2(\alpha+1)}}$, applying this to \eqref{keyup04} yields that for sufficiently large $n$,
\begin{align*}
\P(Z_{t_n}(B_n^c)\geq \delta b^{t_n}/2) \leq& e^{-(1+o_n(1))n^{\alpha}}+2(c_{13}Bn)^{t_n B^{s_n}}  \exp\l\{-e^{\cx(1+o_n(1))n^{\frac{\alpha}{2(\alpha+1)}}}+c_{14}t_nB^{s_n}\r\}\\
\leq & e^{-(1+o_n(1))e^{n^{\alpha}}} + \exp\l\{-(1+o_n(1))e^{[(I_A(p)-\eta)y_\alpha\log b]^{\frac{\alpha}{\alpha+1}}n^{\frac{\alpha}{2(\alpha+1)}}}\r\}.
\end{align*}
Therefore,
\[
\liminf\frac{1}{n^{\frac{\alpha}{2(\alpha+1)}}}\log[-\log \P(Z_{t_n}(B_n^c)\geq \delta b^{t_n}/2) ]\geq  [(I_A(p)-\eta)y_\alpha\log b]^{\frac{\alpha}{\alpha+1}}.
\]
Going back to \eqref{Main04up}, as $b^{t_n}=e^{\cx n^{\frac{\alpha}{2(\alpha+1)}}}$, we conclude that
\[
\liminf\frac{1}{n^{\frac{\alpha}{2(\alpha+1)}}}\log[-\log \P(\bar{Z}_n(\sqrt{n}A)-\nu(A)\geq \Delta)]\geq [(I_A(p)-\eta)y_\alpha\log b]^{\frac{\alpha}{\alpha+1}},
\]
which ends the proof of the upper bound of Theorem \ref{Main04} by letting $\eta\downarrow 0$. \qed

\bigskip

{\bf Acknowledgement.} We are grateful to Prof. Zhan Shi for enlightenment on Lemma \ref{iteration}. Hui He is supported by NSFC (No. 11671041, 11531001, 11371061).

\appendix{}
\renewcommand{\theequation}{\Alph{section}.\arabic{equation}}
\section{Appendix}

\begin{lem}\label{A1}If $L_0< \bar{\Lambda}(p_1)$ and $L_{k+1}=F(L_k)=\alpha \inf_{u\in\mR}(-\log p_1+\gamma(u)-uL_k)+L_k$ for any $k\geq0$, then the sequence $(L_k)_{k\geq0}$ is non-decreasing and
\[
\lim_{k\rightarrow\infty}L_k=\bar{\Lambda}(p_1)=\inf_{a>0}\frac{\gamma(a)-\log p_1}{a}.
\]
\end{lem}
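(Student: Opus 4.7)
The plan is to use Legendre duality to rewrite $F$ as an explicit one-variable function, identify $L^* := \bar{\Lambda}(p_1)$ as its unique fixed point on $[0,\infty)$, and then show that iterating $F$ from $L_0 < L^*$ produces an increasing sequence bounded above by $L^*$, hence convergent to $L^*$.

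First I would simplify the inner infimum. Since $\gamma(u)=\sup_t(ut-\Lambda(t))$ is the convex conjugate of the convex, lower semicontinuous function $\Lambda$, the Fenchel--Moreau theorem yields $\sup_u(uL-\gamma(u)) = \Lambda(L)$ for $L$ in the interior of the effective domain of $\Lambda$. Therefore
\[
\inf_{u\in\mathbb{R}}\bigl(\log\tfrac{1}{p_1}+\gamma(u)-uL\bigr) \;=\; \log\tfrac{1}{p_1}-\Lambda(L),
\]
and so $F(L) = L + \alpha\bigl(\log(1/p_1) - \Lambda(L)\bigr)$. Setting $F(L)=L$ forces $\Lambda(L)=\log(1/p_1)$, which, because $\Lambda$ is strictly increasing on $[0,\infty)$ and $\Lambda(L^*)=\log(1/p_1)$ by $\bar{\Lambda}(p_1)=\Lambda^{-1}(\log(1/p_1))$, has the unique non-negative solution $L = L^*$.

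Next I would establish monotonicity and the upper bound. For $L\in[0,L^*)$ one has $\Lambda(L)<\log(1/p_1)$, hence $F(L)>L$; in particular $L_{k+1}>L_k$ as long as $L_k<L^*$. To preserve $L_k\le L^*$, I would use the supporting line of $\Lambda$ at $L^*$,
\[
\Lambda(L) \;\ge\; \log\tfrac{1}{p_1} + a^*(L-L^*),\qquad a^* := \Lambda'(L^*),
\]
where $a^*$ coincides with the minimizer in $\bar{\Lambda}(p_1)=\inf_{a>0}(\log(1/p_1)+\gamma(a))/a$ (by Legendre duality). Plugging this in,
\[
F(L) \;\le\; L^* - (1-\alpha a^*)(L^*-L),
\]
so $F(L)\le L^*$ as soon as $\alpha a^*\le 1$. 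Since the constant $\alpha$ appearing in the iteration was chosen sufficiently small in Lemma \ref{rup} (precisely the smallness giving $L_0=-\alpha\log p_1<L^*$ and matching the lower bound of Lemma \ref{lem01low}), this non-overshoot inequality is available.

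Combining the two steps by induction, $(L_k)$ is increasing and bounded above by $L^*$, hence converges to some $L^\infty\le L^*$. By continuity of $F$ in the interior of the effective domain of $\Lambda$, passing to the limit in $L_{k+1}=F(L_k)$ gives $L^\infty=F(L^\infty)$, and uniqueness of the fixed point forces $L^\infty = L^*=\bar{\Lambda}(p_1)$. The only substantive obstacle is the non-overshoot condition $\alpha a^*\le 1$; this is the point at which the small choice of $\alpha$ in Lemma \ref{rup} is really used, and it is the reason the lemma is stated with $\alpha$ inherited from that construction rather than for arbitrary $\alpha>0$.
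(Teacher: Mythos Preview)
Your proposal is correct and follows essentially the same route as the paper: both arguments reduce $F$ to $F(L)=L+\alpha(\log(1/p_1)-\Lambda(L))$ via Legendre duality, use the convexity of $\Lambda$ together with $\alpha\Lambda'(L^*)\le 1$ (inherited from the small choice of $\alpha$ in Lemma~\ref{rup}) to prevent overshoot, and conclude convergence of the monotone bounded sequence to $L^*$. The only cosmetic difference is the last step: you pass to the limit using continuity of $F$ and uniqueness of the fixed point, whereas the paper extracts the explicit lower bound $L_{k+1}-L_k\ge \alpha\Lambda'(L_0)(L^*-L_k)$ and lets $L_{k+1}-L_k\to 0$ force $L_k\to L^*$.
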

 \begin{proof}This sequence $(L_k)_{k\geq1}$ is non-decreasing, so its limit exists.  In fact, if $L\in(0, \Lambda^{-1}(\log\frac{1}{p_1}))$, we have $F(L)\geq L$, in other words,
\[
\inf_{u\in\mR}\{\gamma(u/a)-L u/a\}+\log\frac{1}{p_1}=\inf_{u\in\mR}\{\gamma(u)-L u\}+\log\frac{1}{p_1}\geq0.
\]
Note that the rate function $\gamma$ is convex, so we only need that $\gamma'(u)=L$ and the infimum is $\log\frac{1}{p_1}-\Lambda(L)\geq0$.

Note that if $L_k\leq \theta:=\bar{\Lambda}(p_1)$, $\log\frac{1}{p_1}=\Lambda(\theta)$, and
\[
L_{k+1}=\alpha(\Lambda(\theta)-\Lambda(L_k))+L_k\leq \alpha\Lambda'(\theta)(\theta-L_k)+L_k\leq \theta.
\]
Hence, $\lim_{k\rightarrow\infty}L_k=L_\infty\leq \theta$. Here $\alpha\Lambda'(\theta)\leq 1$ because we need to take $\alpha>0$ small so that
\[
\alpha\gamma(1/\alpha)-\alpha\log m\geq \inf_{a>0}\frac{\gamma(a)-\log p_1}{a}=\theta.
\]
In fact, either $1/\alpha\geq \sup_{t\in\mR_+}\Lambda'(t)\geq \Lambda'(\theta)$, or there exists $t_{1/\alpha}>0$ such that $1/\alpha=\Lambda'(t_{1/\alpha})$ and then $\gamma(1/\alpha)=\frac{t_{t/\alpha}}{\alpha}-\Lambda(t_{1/\alpha})$. This follows that
\[
\alpha\gamma(1/\alpha)-\alpha\log m =t_{1/\alpha}-\alpha(\Lambda(t_{1/\alpha})+\log m)\geq \theta.
\]
As $\Lambda(t)\geq0$, we have $t_{1/\alpha}>\theta$. Consequently, $\Lambda'(\theta)\leq \Lambda'(t_{1/\alpha})=1/\alpha$.

On the other hand, as $L_0\leq L_k\leq \theta$,
\[
L_{k+1}-L_k=\alpha(\Lambda(\theta)-\Lambda(L_k))\geq \alpha\Lambda'(L_k)(\theta-L_k)\geq \alpha\Lambda'(L_0)(\theta-L_k)\geq0.
\]
This shows that $\theta-L_k\rightarrow 0$.
\end{proof}
The following lemma concerns large deviation probabilities of sums of independent random variables. The results are possibly  well-known to some experts or implicitly contained in some articles.
\begin{lem}\label{A2}
Suppose that $\{X_i\}_{i\geq1}$ is a sequence of i.i.d. random variables, having the same distribution as $X$. $X$ is symmetric.
\begin{enumerate}
\item[(1)] If  $\P(X\geq x)=\Theta(1) e^{-\lambda x^{\alpha}}$ as $x\rightarrow\infty$ with some $\lambda>0$ and $\alpha>1$, then for $a>0$, and for a sequence of integers $(t_n)$ such that $t_n=o(n^{\frac{1}{3}})$ and $t_n\rightarrow \infty$, we have
\[
\limsup_{n\rightarrow\infty}\frac{t_n^{\alpha-1}}{n^{\alpha/2}}\log \P\left(\sum_{i=1}^{t_n}X_i \geq a\sqrt{n}\right)\leq - \lambda a^\alpha.
\]
\item[(2)] If $\P(X\geq x)=\Theta(1) e^{-e^{x^{\alpha}}}$  as $x\rightarrow\infty$  with some $\alpha>0$, then for any $a>0$ and any sequence $t_n\uparrow \infty $ such that $t_n=o\left(\frac{\sqrt{n}}{(\log n)^{2/\alpha+1}}\right)$,
\[
\liminf_{n\rightarrow\infty}\frac{t_n^{\alpha}}{n^{{\alpha}/{2}}}\log\left[-\log \P\left(\sum_{i=1}^{t_n}X_i \geq a\sqrt{n}\right)\right]\geq a^{\alpha}.
\]
\end{enumerate}

\end{lem}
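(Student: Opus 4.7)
The plan is to adapt the discretization-plus-convexity argument used in Subpart 2 of Section \ref{proofsupex} (the proof of \eqref{combound}) to the setting of a single line with $t_n$ steps. Both parts share the same skeleton: reduce to $\sum |X_i|$ by the symmetry of $X$, decompose on $\{\max_i|X_i|\geq n\}$ to separate the heavy-atom contribution, then discretize $|X_i|\in[x_i,x_i+1)$ on the complement and sum the tail bound over admissible tuples, with the convexity of $x\mapsto x^{\alpha}$ (resp.\ $x\mapsto e^{x^{\alpha}}$) giving the sharp exponent.

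For part (1), after the discretization one obtains, using $\P(|X|\in[x_i,x_i+1))\leq c\,e^{-\lambda x_i^{\alpha}}$ and bounding the number of tuples by $n^{t_n}$,
\[
\P\Big(\sum_{i=1}^{t_n} X_i\geq a\sqrt{n}\Big)\leq c\,t_n e^{-\lambda n^{\alpha}}+(cn)^{t_n}\exp\Big\{-\lambda \inf\sum_{i=1}^{t_n} x_i^{\alpha}\Big\},
\]
where the infimum is over $x_i\in\mathbb{N}$ with $\sum x_i\geq a\sqrt{n}-t_n$. Since $\alpha>1$, Jensen's inequality gives $\sum x_i^{\alpha}\geq t_n^{1-\alpha}(\sum x_i)^{\alpha}\geq t_n^{1-\alpha}(a\sqrt{n}-t_n)^{\alpha}$. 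Taking $\log$, multiplying by $t_n^{\alpha-1}/n^{\alpha/2}$, and noting that $t_n^{\alpha}\log n=o(n^{\alpha/2})$ under $t_n=o(n^{1/3})$, the second term yields $-\lambda a^{\alpha}(1+o(1))$ while the first contributes $-\lambda n^{\alpha/2}t_n^{\alpha-1}\to-\infty$; hence the claimed $\limsup\leq-\lambda a^{\alpha}$.

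For part (2), the same decomposition produces
\[
\P\Big(\sum X_i\geq a\sqrt{n}\Big)\leq c\,t_ne^{-e^{n^{\alpha}}}+(cn)^{t_n}\exp\Big\{-\inf\sum_{i=1}^{t_n} e^{x_i^{\alpha}}\Big\}.
\]
The core step is the lower bound $\inf\sum e^{x_i^{\alpha}}\geq(t_n-O(1))\exp\{(S/t_n)^{\alpha}(1+o(1))\}$ with $S=a\sqrt{n}-t_n$, obtained by Jensen applied to $x\mapsto e^{x^{\alpha}}$ on its convex region $[M_{\alpha},\infty)$ (with $M_{\alpha}=0$ when $\alpha\geq1$, and $M_{\alpha}=((1-\alpha)/\alpha)^{1/\alpha}$ when $\alpha<1$): indices with $x_i\leq M_{\alpha}$ contribute only $O(t_n)$, which is negligible since $S/t_n\to\infty$. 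Combining and taking $\log[-\log\cdot]$, the polynomial factor $(cn)^{t_n}$ is absorbed inside the double exponential precisely because the hypothesis $t_n=o(\sqrt{n}/(\log n)^{2/\alpha+1})$ makes $t_n\log n\ll e^{(S/t_n)^{\alpha}}$. Dividing by $n^{\alpha/2}/t_n^{\alpha}$ yields the claimed $\liminf\geq a^{\alpha}$.

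The main technical wrinkle I expect is the case $\alpha<1$ in part (2), where $e^{x^{\alpha}}$ fails to be convex near the origin; splitting indices by whether $x_i$ exceeds the threshold $M_{\alpha}$ is essential in order to reduce to Jensen. Tracking the error terms arising from $(cn)^{t_n}$ and from $S/t_n$ replacing $a\sqrt{n}/t_n$ is otherwise routine bookkeeping, and the truncation level $n$ used to discard the max-large event is loose (any truncation growing fast enough to render $e^{-\lambda n^{\alpha}}$ (resp.\ $e^{-e^{n^{\alpha}}}$) negligible against the target rate would do equally well).
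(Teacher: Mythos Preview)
Your proof is correct and, for part~(1), essentially identical to the paper's: split on $\{\max_i|X_i|\geq n\}$, discretize on the complement, apply the tail bound factor by factor, use convexity of $x\mapsto x^\alpha$ to get $\sum x_i^\alpha\geq t_n^{1-\alpha}(a\sqrt{n}-t_n)^\alpha$, and absorb the $n^{t_n}$ combinatorial factor via $t_n^\alpha\log n=o(n^{\alpha/2})$.

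For part~(2) the paper takes a slightly different (and simpler) route at the key step: instead of Jensen on $x\mapsto e^{x^\alpha}$, it uses the elementary chain
\[
\sum_{i=1}^{t_n} e^{x_i^\alpha}\ \geq\ \exp\Big\{\max_i x_i^\alpha\Big\}\ \geq\ \exp\Big\{\Big(\tfrac{1}{t_n}\sum_i x_i\Big)^\alpha\Big\},
\]
which holds for every $\alpha>0$ with no convexity hypothesis and hence no $M_\alpha$ threshold or index splitting. This loses a factor of $t_n$ compared to your Jensen bound, but after the outer $\log[-\log\cdot]$ both give the same limit $a^\alpha$. Your approach is sharper before the double logarithm but costs the case analysis for $\alpha<1$; the paper's approach trades that precision for a one-line argument that works uniformly in $\alpha$.
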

\begin{proof}
It suffices to consider $\P\left(\sum_{i=1}^{t_n}|X_i| \geq a\sqrt{n}\right)$. Observe that
\begin{equation}\label{keyA2}
\P\left(\sum_{i=1}^{t_n}|X_i| \geq a\sqrt{n}\right)\leq  \P\left(\sup_{1\leq i\leq t_n}|X_i|\geq n\right)+\P\left(\sum_{i=1}^{t_n}|X_i| \geq a\sqrt{n}, \sup_{1\leq i\leq t_n}|X_i|< n\right).
\end{equation}

{\it Proof of (1):} Note that there exists $c_{15}>0$ such that for any $x\geq0$,
\[
\P(|X|\geq x)\leq c_{15} e^{-\lambda x^{\alpha}}.
\]
Apparently,
\begin{eqnarray}\label{A2up01}
\P\left(\sup_{1\leq i\leq t_n}|X_i|\geq n\right)\leq t_n \P(|X|\geq n)\leq c_{14} t_n e^{-\lambda n^{\alpha}}.
\end{eqnarray}
Meanwhile,
\begin{eqnarray}\label{A2up}
\ar\ar\P\left(\sum_{i=1}^{t_n}|X_i| \geq a\sqrt{n}, \sup_{1\leq i\leq t_n}|X_i|< n\right)\cr
\ar\ar\quad=
\sum_{x_i\in [0,n)\cap {\mbb N}, i=1,\cdots, t_n}\P\left(\sum_{i=1}^{t_n}|X_i| \geq a\sqrt{n}, \sup_{1\leq i\leq t_n}|X_i|< n, |X_i|\in[x_i, x_i+1)\right)
\cr
\ar\ar\quad\leq
\sum_{\sum_{i=1}^{t_n}x_i\geq a\sqrt{n}-t_n; \atop x_i\in [0,n)\cap {\mbb N}, i=1,\cdots, t_n}
\P\left( |X_i|\in[x_i, x_i+1],\, \forall 1\leq i\leq t_n\right)
\cr\ar\ar\quad\leq \sum_{\sum_{i=1}^{t_n}x_i\geq a\sqrt{n}-t_n; \atop x_i\in [0,n)\cap {\mbb N}, i=1,\cdots, t_n} c_{15}^{t_n}\exp\l\{-\lz \sum_{i=1}^{t_n}x_i^{\alpha}\r\}
\cr\ar\ar\quad
\leq
\sum_{\sum_{i=1}^{t_n}x_i\geq a\sqrt{n}-t_n; \atop x_i\in [0,n)\cap {\mbb N}, i=1,\cdots, t_n} c_{15}^{t_n}\exp\l\{-\lz \frac{(a\sqrt{n}-t_n)^{\alpha}}{t_n^{\alpha-1}}\r\}\cr\ar\ar\quad
\leq(nc_{15})^{t_n}\exp\l\{-\lz \frac{(a\sqrt{n}-t_n)^{\alpha}}{t_n^{\alpha-1}}\r\},
\end{eqnarray}
where in the last inequality  we use the fact that
the convexity of mapping $x\mapsto x^\alpha$  for $\alpha>1$ implies
\[
\sum_{i=1}^{t_n}x_i^{\alpha}\geq t_n \left(\frac{\sum_{i=1}^{t_n}x_i}{t_n}\right)^\alpha \geq \frac{(a\sqrt{n}-t_n)^{\alpha}}{t_n^{\alpha-1}}.
\]
Plugging  \eqref{A2up01} and \eqref{A2up} into \eqref{keyA2}, we obtain that
\[
\limsup_{n\rightarrow\infty}\frac{t_n^{\alpha-1}}{n^{\alpha/2}}\log \P\left(\sum_{i=1}^{t_n}|X_i| \geq a\sqrt{n}\right)\leq - \lambda a^\alpha,
\]
which suffices to conclude (1) of Lemma \ref{A2}.\\
{\it Proof of (2):} Similarly,  for any $\ez>0$ there exists some constant $c_{16}\geq1$ such that
\begin{eqnarray}\label{A2up+}
\P\left(\sum_{i=1}^{t_n}|X_i| \geq a\sqrt{n}\right)\ar\leq \ar c_{15} t_n e^{-e^{n^\alpha}}+\P\left(\sum_{i=1}^{t_n}|X_i| \geq a\sqrt{n}, \sup_{1\leq i\leq t_n}|X_i|< n\right)\cr
\ar\leq \ar c_{15}t_n e^{-e^{n^\alpha}}+ (c_{16}n)^{t_n} \exp\l\{- (1-\ez)e^{\l(\frac{a\sqrt{n}}{t_n}\r)^\alpha}\r\},
\end{eqnarray}
where we use the fact
\[
\sum_{i=1}^{t_n}e^{x_i^\alpha}\geq \exp\l\{\max_{1\leq i\leq t_n}x_i^\alpha\r\}\geq \exp\l\{{\l(\frac{\sum_{i=1}^{t_n}x_i}{t_n}\r)^\alpha}\r\}.
\]
Consequently,
\[
\liminf_{n\rightarrow\infty}\frac{t_n^\alpha}{n^{\alpha/2}}\log\left[-\log \P\left(\sum_{i=1}^{t_n}|X_i| \geq a\sqrt{n}\right)\right]\geq a^\alpha.
\]
We have completed the proof.
\end{proof}

\bigskip\bigskip

\noindent{\small Xinxin Chen}

\noindent{Institut Camille Jordan, C.N.R.S. UMR 5208, Universite Claude Bernard Lyon 1, 69622 Villeurbanne Cedex, France.}

\noindent{E-mail: {\tt xchen@math.univ-lyon1.fr}}

\bigskip

\noindent{\small Hui He}

\noindent{School of Mathematical Sciences, Beijing Normal University,
Beijing 100875, People's Republic of China.}

\noindent{E-mail: {\tt hehui@bnu.edu.cn}}

\end{document}